\newtheorem{thm}{Theorem}[section]
\newtheorem{dfn}[thm]{Definition}
\newtheorem{lmm}[thm]{Lemma}
\newtheorem{crl}[thm]{Corollary}
\newtheorem{rmk}[thm]{Remark}
\numberwithin{equation}{section}
\newcommand{\Hol}{\mbox{{\rm Hol}}}
\newcommand{\Q}{{\rm Q}}
\newcommand{\Z}{\Bbb Z}
\newcommand{\C}{\Bbb C}
\newcommand{\R}{\Bbb R}
\newcommand{\K}{\Bbb K}
\newcommand{\G}{\Bbb G}
\newcommand{\F}{\Bbb F}
\newcommand{\N}{\Bbb N}
\renewcommand{\P}{{\rm P}}
\newcommand{\SP}{\mbox{{\rm SP}}}
\newcommand{\RP}{\Bbb R\mbox{{\rm P}}}
\newcommand{\Map}{\mbox{{\rm Map}}}
\newcommand{\CP}{\mathbb{C}\mathbb{P}}
\newcommand{\dis}{\displaystyle}
\newcommand{\p}{\prime}
\newcommand{\I}{\mbox{{\rm (i)}}}
\newcommand{\II}{\mbox{{\rm (ii)}}}
\newcommand{\III}{\mbox{{\rm (iii)}}}
\newcommand{\IV}{\mbox{{\rm (iv)}}}
\newcommand{\V}{\mbox{{\rm (v)}}}
\newcommand{\XS}{X_{\Sigma}}
\newcommand{\GS}{G_{\Sigma, \C}}
\newcommand{\GSK}{G_{\Sigma, \K}}
\newcommand{\n}{\textbf{\textit{n}}}
\newcommand{\dmin}{d_{\rm min}}
\newcommand{\rmin}{r_{\rm min}}
\newcommand{\KS}{\mathcal{K}_{\Sigma}}
\newcommand{\T}{\Bbb T}
\newcommand{\SZ}{{\mathcal{X}}^{D}}
\newcommand{\po}{\mbox{{\rm Poly}}}
\newcommand{\e}{\textbf{\textit{e}}}
\newcommand{\nn}{\textbf{\textit{n}}}
\newcommand{\QQ}{\mathcal{Q}^{\Sigma}}
\title{\bf
Spaces of non-resultant systems of real bounded multiplicity 
determined by
a toric variety
}
\author{Andrzej Kozlowski\footnote{%
Institute of Applied Mathematics and Mechanics,
University of Warsaw, Banacha 2, 02-097 Warsaw, Poland
(E-mail: akoz@mimuw.edu.pl)
}
\  and \ 
Kohhei Yamaguchi\footnote{%
Department of Mathematics,
University of Electro-Communications,  Chofu, Tokyo 182-8585, Japan
(E-mail: kohhei@im.uec.ac.jp)
\newline
\quad 2010 {\it Mathematics Subject Classification.} Primary 55P15; Secondly 55R80, 55P35, 14M25.}}
\date{}
\begin{document}
\maketitle

\begin{abstract}
For each field $\F$ and positive integers $m,n,d$ with $(m,n)\not= (1,1)$,
Farb and Wolfson \cite{FW} defined  the certain affine variety
 $\po^{d,m}_n(\F)$ 
 as generalizations of spaces first studied by Arnold, Vassiliev, Segal and others.
As a natural generalization,
for each fan $\Sigma$ and $r$-tuple $D=(d_1,\cdots ,d_r)$ of positive integers,
the authors \cite {KY12} also defined and considered a more general space  
$\po^{D,\Sigma}_n(\F)$,
 where
 $r$ is the number of one dimensional cones in $\Sigma$.
This space can also be regarded
as a generalization of 
the space $\Hol^*_D(S^2,\XS)$ of based rational curves from the Riemann sphere $S^2$ to
the toric variety
$\XS$  
of degree $D$, where $\XS$ denotes the toric variety (over $\C$) corresponding to the fan $\Sigma$.
\par
 In this paper,
we  define a space $\Q^{D,\Sigma}_n(\F)$
($\F=\R$ or $\C$) which its real analogue and which can be viewed as  a generalization of spaces
considered by Arnold, Vassiliev and others in the context  of real singularity theory.
We prove that homotopy stability holds for
this space and compute the stability dimension
explicitly.
\end{abstract}


\section{Introduction}\label{section 1}
\paragraph{1.1 Historical survey.}
For a complex manifold $X$, let $\Map^*(S^2,X)=\Omega^2X$ (resp. $\Hol^*(S^2,X)$)
denote the space of all based  continuous maps
(resp. based holomorphic maps)
from the Riemann sphere $S^2$ to $X$.
The relationship between the topology of the space $\Hol^*(S^2,X)$ and that of the space
$\Omega^2X$ has played a significant role in several different areas of geometry and 
mathematical physics  (e.g. \cite{AJ}, \cite{A}).
In particular there arose the question  whether the inclusion
$\Hol^*(S^2,X)\stackrel{\subset}{\longrightarrow} \Omega^2X$ is a homotopy equivalence (or homology equivalence)
up to a certain dimension,
which we will refer to as the stability dimension.
Since G. Segal \cite{Se} studied this problem for the case $X=\CP^m$,
a number of mathematicians have investigated various closely related ones
(e.g. \cite{AKY1}, \cite{Gu2},  \cite{GKY2},  \cite{KY8}, \cite{KY9},  \cite{Mo2}, 
\cite {Mo3}, \cite{MV}).
%
\par
Similar stabilization results appeared in the work of Arnold (\cite{Ar1}, \cite{Ar2}), and Vassiliev 
(\cite{Va}, \cite{Va2}) in connection with singularity theory.  
They considered spaces of polynomials without  roots of multiplicity greater than a certain  natural number. These spaces are examples of \lq\lq complement of discriminants\rq\rq\  in Vassiliev's terminology \cite{Va} (cf. \cite{KY1}).
\par
 Inspired by these results, Farb and Wolfson \cite{FW} 
introduced a new family of spaces
$\po^{d,m}_n(\mathbb{F})$,
 which is defined for every field  $\F$ and integers $m,n,d\geq 1$ with $(m,n)\not=(1,1)$. 
The present authors generalised this further in \cite{KY12}, by considering  a fan $\Sigma$ (or toric variety)  
and
a field $\F$, 
and define a space  $\po^{D,\Sigma}_n(\F)$ as follows.
\begin{dfn}[\cite{KY12}]\label{def: poly}
{\rm
Let $\F$ be a field with its algebraic closure $\overline{\F}$, and
let $\Sigma$ be a fan in $\R^m$ such that 
$\Sigma (1)=\{\rho_1,\cdots ,\rho_r\}$, where $\Sigma (1)$ denotes
the set of all one dimensional cones in $\Sigma$
as in
(\ref{eq: one dim cone}).\footnote{%
Precise definitions and a description of the notation related to toric varieties and their fans will be 
given in  \S \ref{section: main results}.}
Let $\XS$ denote the toric variety over $\C$ associated to the fan $\Sigma$,
and let $\N$ denote the set of all positive integers.
\par\vspace{1mm}\par
For each $r$-tuple $D=(d_1,\cdots ,d_r)\in \N^r$,
let $\po^{D,\Sigma}_n(\mathbb{F})$ denote the space of all
$r$-tuples $(f_1(z),\cdots ,f_r(z))\in \mathbb{F}[z]^r$ of 
$\F$-coefficients monic polynomials satisfying the following
two conditions (\ref{eq: Hol(s,XS)}a) and (\ref{eq: Hol(s,XS)}b):
\begin{enumerate}
\item[(\ref{eq: Hol(s,XS)}a)]
$f_i(z)\in \F [z]$ is an $\mathbb{F}$-coefficients monic polynomial of the degree $d_i$
for each $1\leq i\leq r$.
\item[(\ref{eq: Hol(s,XS)}b)]
For each $\sigma =\{i_1,\cdots  ,i_s\}\in I(\KS)$,
polynomials $f_{i_1}(z),\cdots ,f_{i_s}(z)$ have no common root 
$\alpha\in \overline{\F}$ of multiplicity
$\geq n$.
\par
Here,
$\KS$ denotes the underlying simplicial complex of the fan $\Sigma$ on the index set
$[r]=\{1,2,\cdots ,r\}$ defined by (\ref{eq: KS}), and
$I(\KS)$ is the set
$I(\KS)=\{\sigma\subset [r]:\sigma \not\in \KS\}$
as in (\ref{eq: IK}).
\qed
\end{enumerate}
}
\end{dfn}
\begin{rmk}
{\rm
(i)
By using the classical theory of resultants, one can show that 
$\po^{D,\Sigma}_n(\F)$ is an affine variety over $\F$ and that it is the complement of the set of solutions of a system of 
 polynomial equations (called a generalized resultant) with integer coefficients. 
 For this reason,
we call it
{\it the space of non-resultant systems of bounded multiplicity determined by a toric variety}.
\par
(ii)
Note that
\begin{equation}\label{eq: Hol(s,XS)}
\po^{D,\Sigma}_n(\C)=\Hol^*_D(S^2,\XS)
\quad
\mbox{if $n=1$ and \ $\sum_{k=1}^rd_k\n_k={\bf 0}_m,$}
\end{equation}
 where 
$\Hol^*_D(S^2,\XS)$ denotes the space of based rational curves of of degree $D$  on $X$ (i.e. rational maps  of degree $D$ from the Riemann surface $S^2$
to $\XS$) (see  \cite{KY9} for further details).
Thus, the space $\po^{D,\Sigma}_n(\C)$ can be also regarded as a  generalization of the space $\Hol^*_D(S^2,\XS)$.
\qed
}
\end{rmk}

Now recall
the following  homotopy stability result.
\begin{thm}[\cite{KY12}]
\label{thm: main result of KY12} 
Let $D=(d_1,\cdots ,d_r)\in \N^r$,
$n\geq 2$,
and let $\XS$ be an $m$ dimensional simply connected non-singular toric variety
over $\C$ 
such that 
the condition $($\ref{equ: condition dk}$)^*$ holds.
\par
$\I$
If $\sum_{k=1}^rd_k\textbf{\textit{n}}_k= {\bf 0}_m$, then
the natural map 
$$
i_D:\po^{D,\Sigma}_n (\C)\to 
\Omega^2_D\XS (n)\simeq
\Omega^2_0\XS (n)\simeq 
\Omega^2\mathcal{Z}_{\KS}(D^{2n},S^{2n-1})
$$ 
is a homotopy equivalence
through dimension $d_{\rm poly}(D;\Sigma ,n)$. 
\par
$\II$
If
$\sum_{k=1}^rd_k\textbf{\textit{n}}_k\not= {\bf 0}_m$,
there is a map
$$
j_D:\po^{D,\Sigma}_n(\C) \to 
\Omega^2\mathcal{Z}_{\KS}(D^{2n},S^{2n-1})
$$
which
is a homotopy equivalence
through dimension $d_{\rm poly}(D;\Sigma ,n)$.
\par\vspace{2mm}\par
Here, we denote by
$\lfloor x\rfloor$ the integer part of a real number $x$.
Moreover, let
$d_{\rm min}=\min\{d_1,\cdots ,d_r\}$
and
$\rmin(\Sigma)$ denote positive integers given by (\ref{eq: rmin}),
and let
$d_{\rm poly}(D;\Sigma,n)$ denote the positive integer defined by\footnote{%
Note that the spaces
$\XS (n)$ and $\mathcal{Z}_K(X,A)$
are 
the orbit space and the polyhedral product of a pair
$(X,A)$ given by (\ref{eq: XSigma(n)}) and
Definition \ref{dfn: polyhedral product}, respectively.
}
\begin{equation}\label{eq: dpoly}
d_{\rm poly}(D;\Sigma,n)=(2n\rmin (\Sigma)-3)\lfloor d_{\rm min}/n\rfloor -2.
\qquad
\qed
\end{equation}
\end{thm}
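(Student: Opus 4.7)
The plan is to apply Vassiliev's simplicial-resolution method, in the spirit of \cite{Va} and of the closely related analyses carried out in \cite{Mo2}, \cite{KY1}, \cite{KY9}, for spaces of polynomial tuples with controlled common roots. First I would embed $\po^{D,\Sigma}_n(\C)$ as the complement of its resultant discriminant $\Sigma^{D,\Sigma}_n$ inside the affine space $\prod_{k=1}^{r}\C^{d_k}\cong \C^{|D|}$ of tuples $(f_1,\dots,f_r)$ of monic polynomials with $\deg f_k=d_k$. A point lies in $\Sigma^{D,\Sigma}_n$ exactly when there exist $\sigma\in I(\KS)$ and $\alpha\in\C$ such that each $f_i$ with $i\in\sigma$ vanishes at $\alpha$ to order at least $n$. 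By Alexander duality in the one-point compactification, computing the homotopy type of $\po^{D,\Sigma}_n(\C)$ through a range reduces to computing the homology of $\Sigma^{D,\Sigma}_n$ through a suitably shifted range.

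Next I would construct a non-degenerate simplicial (Vassiliev) resolution $\SZ\to \Sigma^{D,\Sigma}_n$. For each tuple $f=(f_1,\dots,f_r)$ in the discriminant, one forms the finite ``bad set'' $B(f)\subset \C$ of points witnessing the failure of the non-resultant condition and attaches over $f$ the simplex on $B(f)$. Filtering $\SZ$ by $|B(f)|$ yields a spectral sequence whose $E^{1}$-page is built from configuration spaces of points in $\C$ labelled by the elements $\sigma\in I(\KS)$, tensored with chains on the corresponding simplices. A single point of $B(f)$ labelled by $\sigma$ imposes real codimension $2n\,|\sigma|\geq 2n\,\rmin(\Sigma)$, because it requires $|\sigma|$ polynomials to vanish to order $n$ at a common complex point.

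In parallel, I would use the standard configuration-space-with-labels model for the loop space $\Omega^{2}\mathcal{Z}_{\KS}(D^{2n},S^{2n-1})$ arising from the polyhedral product pair, together with the identifications $\Omega^2_D\XS(n)\simeq \Omega^2_0\XS(n)\simeq \Omega^{2}\mathcal{Z}_{\KS}(D^{2n},S^{2n-1})$, valid when $\XS$ is simply connected. The map $i_D$ (respectively $j_D$) is then defined by scanning: around each root of each $f_k$ one extracts, via local normalisation, an element in the $k$th pair $(D^{2n},S^{2n-1})$ of the polyhedral product and glues these contributions into a based map on $S^2$. Scanning is compatible with the bad-set filtration, so it induces a morphism between the two associated spectral sequences.

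The main obstacle, and the source of the exact stability bound, is to verify that this morphism induces an isomorphism of $E^{1}$-pages below a precise filtration degree and to track how that cut-off propagates to the abutments. The cardinality of $B(f)$ is at most $\lfloor \dmin/n\rfloor$, because each bad root consumes $n$ of the available $d_{\rm min}$ roots of the minimum-degree polynomial, and hence filtration degrees beyond this are empty on both sides. Converting the filtration cut-off through the Alexander-duality shift by $2|D|-1$ and the usual Vassiliev reindexing produces the advertised homotopy-equivalence range $(2n\,\rmin(\Sigma)-3)\lfloor \dmin/n\rfloor -2$. In case (ii), where $\sum_{k=1}^{r}d_k\nn_k\not={\bf 0}_m$, the scanning construction no longer has a canonical target in $\Omega^{2}_{D}\XS$, but its lift $j_D$ to $\Omega^{2}\mathcal{Z}_{\KS}(D^{2n},S^{2n-1})$ remains well defined, and the resolution argument applies verbatim; the only additional check is that the shift in degree component does not affect the bad-set filtration or the $E^{1}$-comparison.
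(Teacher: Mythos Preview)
This theorem is quoted from \cite{KY12} without proof in the present paper (note the \qed\ marker at the end of the statement), so there is no proof here to compare against directly.  That said, the paper proves its own main results (Theorems~\ref{thm: I} and~\ref{thm: I R}) by essentially the same machinery that \cite{KY12} uses for this theorem, so one can compare your outline against that.

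Your broad architecture --- embed in affine space, Alexander duality, Vassiliev simplicial resolution of the discriminant, spectral sequence, scanning --- is correct and matches both \cite{KY12} and \S\S\ref{section: simplicial resolution}--\ref{section: proofs of main results} of the present paper.  However, there is one genuine error and one strategic divergence worth flagging.

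\textbf{The error.}  You assert that ``the cardinality of $B(f)$ is at most $\lfloor \dmin/n\rfloor$ \ldots\ and hence filtration degrees beyond this are empty on both sides.''  This is false: a tuple can have arbitrarily many bad points (different bad points may be witnessed by different $\sigma\in I(\KS)$, so no single polynomial need absorb $n$ roots per bad point).  What is true is only that for $k\le \lfloor \dmin/n\rfloor$ the $k$th filtration stratum is an affine bundle over the labelled configuration space (Lemma~\ref{lemma: vector bundle*}); beyond that the geometry degenerates.  This is precisely why both \cite{KY12} and the present paper pass to the \emph{truncated} simplicial resolution (Definition~\ref{def: 2.3}, \cite{Mo3}) and carefully bound the contribution of the single uncontrolled term at filtration $\lfloor \dmin/n\rfloor+1$ (Lemma~\ref{lmm: Ed}(v)).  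Without this your range computation has no justification.

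\textbf{The strategic divergence.}  You propose to build a parallel spectral sequence for $\Omega^2\mathcal{Z}_{\KS}(D^{2n},S^{2n-1})$ and compare $E^1$-pages directly via scanning.  Neither \cite{KY12} nor this paper does that.  Instead they (a)~compare the truncated spectral sequences for $\po^{D,\Sigma}_n(\C)$ and $\po^{D+\e_i,\Sigma}_n(\C)$ to show the stabilization map is a homology equivalence through the stated range (cf.\ Theorem~\ref{thm: III}), and then (b)~prove separately that the \emph{colimit} $\po^{D+\infty,\Sigma}_n(\C)$ is homotopy equivalent to the loop space via the stabilized scanning map (cf.\ Theorem~\ref{thm: V}).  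Step~(b) uses no spectral sequence at all; it is a quasifibration argument \`a la Segal--Dold--Thom.  Your direct-comparison approach may be viable, but it is not what is done, and you have not indicated how you would filter the loop-space side compatibly.
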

\paragraph{1.2  Basic definitions.}
In this paper, we replace the space $\po^{D,\Sigma}_n(\F)$
 by its {\it real} analogue $\Q^{D,\Sigma}_n(\F)$
 for $\F=\C$ or $\R$. 
Its formal definition is below. 
\begin{dfn}\label{dfn: Pol(K)}
{\rm
Let $\Sigma$ be a fan in $\R^m$ such that 
$\Sigma (1)=\{\rho_1,\cdots ,\rho_r\}$, where $\Sigma (1)$ denotes
the set of all one dimensional cones in $\Sigma$ as in Definition \ref{def: poly}.
\par
For each $r$-tuple $D=(d_1,\cdots ,d_r)\in \N^r$ and $\K=\C$ or $\R$,
let $\Q^{D,\Sigma}_n(\K)$ denote the space of all
$r$-tuples $(f_1(z),\cdots ,f_r(z))\in \mathbb{K}[z]^r$ of 
$\K$-coefficients monic polynomials satisfying the following
two conditions (\ref{eq: dpoly}a) and (\ref{eq: dpoly}b):
\begin{enumerate}
\item[(\ref{eq: dpoly}a)]
For each $1\leq i\leq r$,
$f_i(z)\in \K [z]$ is an $\mathbb{K}$-coefficients monic polynomial of the degree $d_i$.
\item[(\ref{eq: dpoly}b)]
For each $\sigma =\{i_1,\cdots  ,i_s\}\in I(\KS)$,
polynomials $f_{i_1}(z),\cdots ,f_{i_s}(z)$ have no common {\it real} root 
$\alpha\in \R$ of multiplicity
$\geq n$ (but may have a common root $\alpha \in \C\setminus\R$ of any multiplicity).
\qed
\end{enumerate}
}
\end{dfn}
 Note that 
the following inclusion holds:
\begin{equation}\label{1.3}
\po^{D,\Sigma}_n(\K)\subset \Q^{D,\Sigma}_n(\K)
\quad
\mbox{ for }\K=\R\mbox{ or }\C.
\end{equation}
Recall that  the space $\Q^{D,\Sigma}_n(\C)$ was already investigated for the case 
$n=1$ in \cite{KY11},\footnote{%
It is written as ${\rm Pol}^*_D(S^1,\XS)=\Q^{D,\Sigma}_n(\C)$ if $n=1$
in \cite{KY11}.
}
 and that the space $\Q^{D,\Sigma}_n(\K)$ was already extensively studied 
in \cite{KY10} for the
the case $(\XS,D)=(\CP^{m-1},D_m(d))$,\footnote{%
It is written as $\Q^{d,m}_n(\K)=\Q^{D,\Sigma}_n(\K)$ in \cite{KY10} for
$(\XS,D)=(\CP^{m-1},D_m(d))$.
}
where $D_m(d)\in \N^m$ denotes the $m$-tuple of positive
integers defined by
\begin{equation}\label{1.4}
D_m(d)=(d,d,\cdots ,d)\quad
(\mbox{$m$-times).}
\end{equation}
\paragraph{1.3 The main results.}
In this paper we will study the homotopy type of the space
$\Q^{D,\Sigma}_n(\K)$ for $\K=\C$  or $\R$.
In particular, we will show that 
Atiyah-Jones-Segal type  homotopy stability holds for the space
$\Q^{D,\Sigma}_n(\K)$.
\par\vspace{1mm}\par
In our  result we will need 
the following two conditions
$($\ref{1.4}$)^*$ and $($\ref{1.4}$)^{\dagger}$:\footnote{%
If the condition $($\ref{1.4}$)^*$ (resp. $($\ref{1.4}$)^{\dagger}$) is satisfied, 
the space
$\Q^{D,\Sigma}_n(\C)$ 
(resp. $\Q^{D,\Sigma}_n(\R)$) is simply connected
(see Corollary \ref{crl: 1-connected}).
Moreover,
if the condition (\ref{1.4}$)^*$ or (\ref{1.4}$)^{\dagger}$
is satisfied, the condition  $\lfloor \dmin/n\rfloor \geq 1$ holds.
Thus,  
$d(D;\Sigma,n,\K)\geq 1$
and
the main results (Theorem \ref{thm: I S1},  Corollary \ref{crl: cor S1})
 are not vacuous. 
 Note that the condition $($\ref{1.4}$)^*$ holds if the condition  
$($\ref{1.4}$)^{\dagger}$ is satisfied.
}
\begin{enumerate}
\item[$($\ref{1.4}$)^*$]
$d_{\rm min}\geq n\geq 1$.
\item[$($\ref{1.4}$)^{\dagger}$]
$\dmin \geq n\geq 1$ and $(n,\rmin(\Sigma))\not=(1,2)$.
\end{enumerate}
Let $d(D;\Sigma,n,\K)$ denote the positive integer defined by
\begin{equation}\label{eq: number d(D)}
d(D;\Sigma,n,\K)
=
\begin{cases}
(2n\rmin (\Sigma)  -2)\lfloor d_{\rm min}/n\rfloor -2
&
\mbox{ if }\K=\C,
\\
(n\rmin (\Sigma)  -2)\lfloor d_{\rm min}/n\rfloor -2
&
\mbox{ if }\K=\R.
\end{cases}
\end{equation}
Then we can state the main result of this article  as follows.
\begin{thm}
[Theorems \ref{thm: I} and \ref{thm: I R}] 
\label{thm: I S1} 
Let $n\in \N$, let
$D=(d_1,\cdots ,d_r)\in \N^r$,
and
let $\XS$ be an $m$ dimensional simply connected non-singular toric variety 
satisfying
the  condition $($\ref{equ: condition dk}$)^*$.
\par
$\I$
If the condition $($\ref{1.4}$)^{*}$ is satisfied,
the  map $($given by $($\ref{eq: def. jn}$)$ and $($\ref{eq: def of jDnC}$))$
$$
j_{D,n,\C}:\Q^{D,\Sigma}_n(\C) \to 
\Omega \mathcal{Z}_{\KS}(D^{2n},S^{2n-1})
$$ 
is a homotopy equivalence
through dimension $d(D;\Sigma ,n,\C)$. 
\par
$\II$
If the condition $($\ref{1.4}$)^{\dagger}$ is satisfied,
the  map (given by $($\ref{eq: the map jDR}$)$ and
$($\ref{eq: def of jDnR}$))$
$$
j_{D,n,\R}:\Q^{D,\Sigma}_n(\R) \to 
\Omega \mathcal{Z}_{\KS}(D^{n},S^{n-1})
$$ 
is a homotopy equivalence
through dimension $d(D;\Sigma ,n,\R)$.
\end{thm}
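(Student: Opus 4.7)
The natural strategy, following the template of the authors' earlier work \cite{KY10, KY11, KY12} on related non-resultant spaces, is to combine a Segal-style scanning map with a Vassiliev-type simplicial resolution of the discriminant.

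First, I would make the maps $j_{D,n,\C}$ and $j_{D,n,\R}$ geometric. Given $(f_1,\ldots,f_r)\in \Q^{D,\Sigma}_n(\K)$, each real point $\alpha\in\R$ determines the set $\sigma(\alpha)=\{i:\mathrm{ord}_\alpha f_i\geq n\}\subset [r]$, and condition (\ref{eq: dpoly}b) forces $\sigma(\alpha)\in \KS$. A localised Vieta-type pairing sends each such $\alpha$, labelled by the low-order Taylor coefficients of the $f_i$ at $\alpha$, to the appropriate smash factor of the polyhedral product $\mathcal{Z}_{\KS}(D^{2n},S^{2n-1})$ in the complex case and $\mathcal{Z}_{\KS}(D^{n},S^{n-1})$ in the real case (the dimensional drop reflecting that real coefficients occupy half as many real dimensions). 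Scanning along small intervals in $\R$ and compactifying at $\infty$ produces the required element of the loop space; the target is a single $\Omega$ rather than $\Omega^2$ because only real roots are forbidden, so one scans only along $\R$, not $\C$.

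Second, I would analyse $\Q^{D,\Sigma}_n(\K)$ via its Vassiliev spectral sequence inside the affine space $\prod_{k=1}^r \K^{d_k}$ of all monic tuples of the given multidegree. The complement is the union, over $\sigma\in I(\KS)$, of the closed subvarieties where the $f_i$ with $i\in\sigma$ share a real root of multiplicity at least $n$. A non-degenerate simplicial resolution stratifies this union by finite configurations of real points, each labelled by a missing face in $I(\KS)$, with simplicial fibres of controllable dimension; Alexander duality converts it into a spectral sequence for $H^*(\Q^{D,\Sigma}_n(\K))$. The target loop space $\Omega \mathcal{Z}_{\KS}(D^{\epsilon n},S^{\epsilon n-1})$ (with $\epsilon=1$ or $2$) admits an analogous labelled-configuration-space model on $\R$, yielding a matching spectral sequence.

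Third, I would compare the two spectral sequences stratum by stratum via the scanning map and verify that the comparison is an isomorphism on $E_1$ up to the filtration level corresponding to the claimed stability dimension. The value in (\ref{eq: number d(D)}) then emerges from the worst-case stratum: configurations of $\lfloor \dmin/n\rfloor$ real points, each labelled by a minimal missing face of size $\rmin(\Sigma)$, contribute codimension $2n\rmin(\Sigma)$ per point in the complex case and $n\rmin(\Sigma)$ per point in the real case, with the $-2$ offsets reflecting the transition from geometric codimension to the stability range of a single loop space. Since conditions $(\ref{1.4})^*$ and $(\ref{1.4})^{\dagger}$ ensure simple-connectedness via Corollary~\ref{crl: 1-connected}, a homology equivalence on this range upgrades automatically to a homotopy equivalence.

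The main obstacle will be the real case. In the complex case the scanning resolution lives over configurations in $\C$ and is by now standard from the predecessor paper, whereas for $\K=\R$ one must track real roots separately from complex-conjugate pairs: only real roots of multiplicity $\geq n$ are forbidden, yet complex-conjugate pairs still constrain the ambient polynomial space and affect the codimension count in a non-obvious way. The exclusion $(n,\rmin(\Sigma))\neq (1,2)$ in $(\ref{1.4})^{\dagger}$ should turn out to be precisely what guarantees simple-connectedness of $\Q^{D,\Sigma}_n(\R)$ and prevents a low-filtration class from obstructing the spectral-sequence comparison; verifying this, together with extracting the sharp numerical dimension, is the delicate bookkeeping on which the argument rests.
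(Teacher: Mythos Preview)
Your ingredients are the right ones, but the paper assembles them differently. You propose to build a second spectral sequence directly for the target $\Omega\mathcal{Z}_{\KS}(D^{d(\K)n},S^{d(\K)n-1})$ via a labelled-configuration model on $\R$ and then compare it stratum-by-stratum with the discriminant spectral sequence through the scanning map. The paper instead factors the argument into two independent pieces: the truncated Vassiliev spectral sequence is used \emph{only} to compare $\Q^{D,\Sigma}_n(\K)$ with $\Q^{D+\textbf{\textit{e}}_i,\Sigma}_n(\K)$, showing that the stabilization maps $s_{D,D+\textbf{\textit{a}}}$ are homology equivalences through dimension $d(D;\Sigma,n,\K)$ (Theorems~\ref{thm: III} and \ref{thm: III (KY16)}); the scanning construction is then invoked \emph{only at the stable level}, identifying the colimit $\Q^{D+\infty,\Sigma}_n(\K)$ with the loop space via a quasifibration argument (Lemma~\ref{lmm: the map wn}, Theorem~\ref{thm: V}). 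The main theorem follows by composing the finite-degree stabilization with the stable equivalence and upgrading via Corollary~\ref{crl: 1-connected}. This split sidesteps exactly the step you leave implicit, namely producing a filtration on the loop space together with a filtration-preserving map from the discriminant resolution so that the spectral-sequence comparison theorem applies. In the paper's setup both spectral sequences in play are discriminant resolutions (for $\Sigma_D$ and $\Sigma_{D+\textbf{\textit{e}}_i}$), and the open embedding $\tilde{s}_{D,i}$ of (\ref{equ: open embedding}) furnishes the filtration-preserving map for free; your direct route would need an analogue of this for the scanning map itself, and that is not automatic.
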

\begin{rmk}
{\rm
(i)
Recall that a map $g:V\to W$ is called 
{\it a homology $($resp. homotopy$)$
equivalence through dimension }$N$ 
if the induced homomorphism
$
g_*:H_k(V;\Z)\to H_k(W;\Z)$
(resp.
$g_*:\pi_k(V)\to\pi_k(W))$
is an isomorphism for all $k\leq N$.
\par
(ii)
Similarly, when $G$ is a topological group and
a map $g:V\to W$ is a $G$-equivariant map between $G$-spaces $V$ and $W$,
the map $g$ is called a
{\it  $G$-equivariant homology $($resp. 
$G$-equivariant homology homotopy$)$
equivalence through dimension }$N$
if the restriction $g^H=g\vert V^H:V^H\to W^H$ is a
homology (resp. homotopy) equivalence through dimension $N$
for any subgroup $H\subset G$.
Here, for each $G$-space $X$ and a subgroup $H\subset G$,
let $X^H$ denote the $H$-fixed subspace of $X$ defined by
\begin{equation}\label{eq: XH}
X^H=\{x\in X:h\cdot x=x
\quad
\mbox{ for any }h\in H\}.
\qquad
\qed
\end{equation}
}
\end{rmk}

\paragraph{1.4 Organization.}
This paper is organized as follows.
In \S \ref{section: main results} we recall the basic definitions and facts which is needed for the statements of the results of this article.
After then precise statements of the main results
(Theorems \ref{thm: I}, \ref{thm: I R}, and Corollary \ref{crl: I}) 
are
stated.
In \S \ref{section: polyhedral products} we recall several basic facts related to polyhedral products and toric varieties.
In \S \ref{section: simplicial resolution}, 
we summarize  the definition of the non-degenerate simplicial resolution, and
we construct the Vassiliev spectral sequence.
In \S \ref{section: stabilization map} we define the stabilization maps, and
in \S \ref{section: homology stability}, we construct the truncated spectral sequence
induced from the spectral  sequence obtained in \S \ref{section: simplicial resolution}. 
By using this truncated spectral sequence, we  shall prove the homology stability result 
(Theorems \ref{thm: III}, \ref{thm: III (KY16)}, and Corollary \ref{crl: III*}).
In \S \ref{section: connectivity}
we investigate about the connectivity of the space $\Q^{D,\Sigma}_n(\K).$
In particular,
we  prove that the space $\Q^{D,\Sigma}_n(\C)$ 
(resp. $\Q^{D,\Sigma}_n(\R)$) is simply connected
if the condition $($\ref{1.4}$)^*$ 
(resp. $($\ref{1.4}$)^{\dagger}$) is satisfied. 
In \S \ref{section: scanning maps}  
we consider the configuration model 
for the space $\Q^{D,\Sigma}_n(\K)$ and
recall the stabilized horizontal scanning map
(see Theorem \ref{thm: scanning map}).
In \S \ref{section: stability} we prove the stability result
(Theorem \ref{thm: V}), and in \S \ref{section: proofs of main results}
we give the proofs of the main results
(Theorems \ref{thm: I}, \ref{thm: I R}, and Corollary \ref{crl: I}) by using it. 


\section{Toric varieties and the main results}
\label{section: main results}
In this section we recall several basic definitions and facts related to toric varieties
(convex rational polyhedral cones, toric varieties, a fan of toric variety,
polyhedral products, homogenous coordinate, rational curves on a toric variety etc).
Then by using these definitions and notations we
give precise statements of the main results of this paper.
From now on,
we always assume that  $\K=\C$ or $\R$.
Moreover, if $\dmin <n$, $\lfloor \dmin/n\rfloor =0$ and
$d(D;\Sigma,n,\K)=-2<0$.
So we also assume that $\dmin \geq n\geq 1$.
\paragraph{2.1 Fans, toric varieties and Polyhedral products.}
{\it A convex rational polyhedral cone}
in $\R^m$
is a subset of $\R^m$ of the form
\begin{equation}\label{eq: cone}
\sigma = \mbox{Cone}(S)=
\mbox{Cone}(\textit{\textbf{m}}_1,\cdots,\textit{\textbf{m}}_s)
=
\Bigg\{\sum_{k=1}^s\lambda_k\textit{\textbf{m}}_k:\lambda_k\geq 0
\Bigg\}
\end{equation}
for a finite set $S=\{\textit{\textbf{m}}_1, \cdots ,
\textit{\textbf{m}}_s\} \subset\Z^m$.
The dimension of $\sigma$ is the dimension of
the smallest subspace of $\R^m$ which contains $\sigma$.
A convex rational polyhedral cone $\sigma$
is called
{\it  strongly convex} if
$\sigma \cap (-\sigma)=\{{\bf 0}_m\}$,
where
we set ${\bf 0}_m={\bf 0}=(0,0,\cdots, 0)\in \R^m.$
{\it A face} $\tau$ of 
a convex rational polyhedral cone
$\sigma$ is a subset $\tau\subset \sigma$ of the form
$\tau =\sigma\cap \{\textit{\textbf{x}}\in\R^m:L(\textbf{\textit{x}})=0\}$ 
for some linear form $L$ on $\R^m$, such that
$\sigma \subset \{\textit{\textbf{x}}\in\R^m:
L(\textit{\textbf{x}})\geq 0\}.$
Note that if $\sigma$ is a strongly convex rational polyhedral cone,
so is any of its faces.\footnote{%
When $S$ is the emptyset $\emptyset$,
we set $\mbox{Cone}(\emptyset)=\{{\bf 0}_m\}$
and we may also regard it as one of strongly convex rational polyhedral cones
in $\R^m$.
}
\begin{dfn}
{\rm
Let $\Sigma$ be a finite collection of strongly convex rational polyhedral cones
in $\R^m$.
\par
(i)
The set $\Sigma$ 
is called {\it a fan} (in $\R^m$) if
the following two conditions hold:
\begin{enumerate}
\item[(\ref{eq: cone}a)]
Every face $\tau$ of $\sigma\in \Sigma$ belongs to $\Sigma$.
\item[(\ref{eq: cone}b)]
If $\sigma_1,\sigma_2\in \Sigma$, $\sigma_1\cap \sigma_2$ is
a common face of each $\sigma_k$ and
$\sigma_1\cap\sigma_2\in \Sigma$.
\end{enumerate}
\par
(ii)
An $m$ dimensional irreducible normal  variety
$X$ (over $\C$) is called {\it a toric variety}
if it has a Zariski open subset
 $\T^m_{\C}=(\C^*)^m$ and the action of $\T^m_{\C}$ on itself
extends to an action of $\T^m_{\C}$ on $X$.
\par
The most significant property of a toric variety
is that it is characterized up to isomorphism entirely by its 
associated fan 
$\Sigma$. 
We denote by $\XS$ the toric variety associated to a fan $\Sigma$
(see \cite{CLS} for the details).
\par
(iii)
Let $K$ be some set of subsets of $[r]$.
Then the set $K$ is called {\it an abstract simplicial complex} on the index set $[r]$ 
 if the following condition $(\dagger)_K$ holds:
 \begin{enumerate}
 \item[$(\dagger)_K$]
 \quad
 $\tau \subset \sigma$ and $\sigma\in K$, then $\tau\in K$.
 \end{enumerate}
}
\end{dfn}
\begin{rmk}
{\rm
(i) It is well known that
there are no holomorphic maps
$\CP^1=S^2\to \T^m_{\C}$ except the constant maps, and that
the fan $\Sigma$ of $\T^m_{\C}$ is $\Sigma =\{{\bf 0}_m\}$.
Hence, without loss of generality
we always assume that $\XS\not=\T^m_{\C}$, and that
any fan $\Sigma$ in $\R^m$
satisfies the condition
$\{{\bf 0}_m\}\subsetneqq \Sigma .$
\par
(ii)
In this paper by a simplicial complex $K$ we always mean  \textit{an abstract simplicial complex}, 
and we always assume that a simplicial complex $K$  contains the empty set 
$\emptyset$.
\qed
}
\end{rmk}
\begin{dfn}\label{dfn: polyhedral product}
{\rm
Let $K$ be a simplicial complex on the index set $[r]=\{1,2,\cdots ,r\}$,
and let $(X,A)$
be a  pairs of based spaces.
\par\vspace{1mm}\par
(i) Let $I(K)$ denote the collection of subsets $\sigma\subset [r]$ defined by
\begin{equation}\label{eq: IK}
I(K)=\{\sigma \subset [r]:\sigma\notin K\}.
\end{equation}
\par
(ii)
Define
 {\it the polyhedral product} $\mathcal{Z}_K
(X,A)$ 
with respect to $K$
by
\begin{align}\label{eq: polyhedral p}
\mathcal{Z}_K(X,A)
&=
\bigcup_{\sigma\in K}(X,A)^{\sigma},
\qquad
\mbox{where}
\\
(X,A)^{\sigma}
&=
\{(x_1,\cdots ,x_r)\in X^r:
x_k\in A\mbox{ if }k\notin \sigma\}.
\nonumber
\end{align}
\par
(iii)
For each subset $\sigma =\{i_1,\cdots ,i_s\}\subset [r]$, let
$L_{\sigma}(\K^n)$ denote the subspace of $\K^{nr}$ defined by
\begin{align}
L_{\sigma}(\K^n)
&=
\{(\textbf{\textit{x}}_1,\cdots ,\textbf{\textit{x}}_r)
\in (\K^n)^r=\K^{nr}: 
\textbf{\textit{x}}_{i_1}=\cdots =\textbf{\textit{x}}_{i_s}=
{\bf 0}_n\}
\end{align}
and let $L_n^{K}(\K)$ denote the subspace of $\K^{nr}$ defined by
\begin{equation}\label{eq: L(Sigma)}
L_n^{K}(\K )=\bigcup_{\sigma\in I(K)}L_{\sigma}(\K^n)
=\bigcup_{\sigma\subset [r],\sigma\notin K}L_{\sigma}(\K^n).
\end{equation}
Then it is easy to see that}
\begin{equation}\label{eq: LSigma}
\mathcal{Z}_K(\K^n,(\K^{n})^*)
=
\K^{nr}\setminus
L_n^{K}(\K),
\ \ 
\mbox{\rm where }(\K^{n})^*=\K^n\setminus \{{\bf 0}_n\}.
\end{equation}
\end{dfn}
\paragraph{2.2 Homogenous coordinates.}
Next we recall the basic facts about homogenous coordinates on
toric varieties.
\par\vspace{1mm}\par
\begin{dfn}\label{dfn: fan}
{\rm
Let $\Sigma$ be a fan in $\R^m$
such that $\{{\bf 0}_m\}\subsetneqq \Sigma$, and let
\begin{equation}\label{eq: one dim cone}
\Sigma (1)=\{\rho_1,\cdots ,\rho_r\}
\end{equation}
denote the set of all
one dimensional cones in $\Sigma$. 
\par
(i)
For each $1\leq k\leq r$,
we denote by 
$
\textbf{\textit{n}}_k\in\Z^m
$
\textit{the primitive generator} of
$\rho_k$, such that 
$\rho_k \cap \Z^m=\Z_{\geq 0}\cdot \textbf{\textit{n}}_k.$
%
Note that $\rho_k 
=\mbox{Cone}(\textit{\textbf{n}}_k).$
\par
(ii)
Let $\mathcal{K}_{\Sigma}$ denote {\it the underlying simplicial complex of}  $\Sigma$ 
defined by
\begin{equation}\label{eq: KS}
\KS =
\Big\{\{i_1,\cdots ,i_s\}\subset [r]:
\textbf{\textit{n}}_{i_1},\textbf{\textit{n}}_{i_2},\cdots
,\textbf{\textit{n}}_{i_s}
\mbox{ span a cone in }\Sigma\Big\}.
\end{equation}
It is easy to see that $\KS$ is a simplicial complex on the index set $[r]$.
\par
(iii)
Let 
$\GSK\subset \T^r_{\K}=(\K^*)^r$  be the subgroup 
\begin{equation}
\GSK =
\{(\mu_1,\cdots ,\mu_r)\in \T^r_{\K}:
\prod_{k=1}^r(\mu_k)^{\langle \textbf{\textit{n}}_k,\textbf{\textit{m}}
\rangle}=1
\mbox{ for all }\textbf{\textit{m}}\in\Z^m\},
\end{equation}
where
$\langle \textbf{\textit{u}}, \textbf{\textit{v}}\rangle=
\sum_{k=1}^m u_kv_k$ for
$\textbf{\textit{u}}=(u_1,\cdots ,u_m)$
and  $\textbf{\textit{v}}=(v_1,\cdots ,v_m)\in\R^m$.
\par
(iv)
There is a natural $\GSK$-action on
$\mathcal{Z}_{\mathcal{K}_{\Sigma}}(\K^n,(\K^{n})^*)$  by
coordinate-wise multiplication, 
\begin{equation}\label{eq: coordinate multiplication}
(\mu_1,\cdots ,\mu_r)
\cdot(\textbf{\textit{x}}_1,\cdots ,\textbf{\textit{x}}_r)=
(\mu_1\textbf{\textit{x}}_1,\cdots ,\mu_r\textbf{\textit{x}}_r)
\end{equation}
for
$((\mu_1,\cdots ,\mu_r),
(\textbf{\textit{x}}_1,\cdots ,\textbf{\textit{x}}_r))
\in \GSK \times \mathcal{Z}_{\mathcal{K}_{\Sigma}}(\K^n,(\K^{n})^*),$
where we set
\begin{equation}
\mu \textbf{\textit{x}}=
(\mu x_1,\cdots ,\mu x_n)
\quad
\mbox{if }(\mu ,
\textbf{\textit{x}})=(\mu, (x_1,\cdots ,x_n))\in \K^*\times \K^n.
\end{equation}
\par
(v)
Let 
$X_{\Sigma,\K}(n)$
denote
the corresponding orbit space
\begin{equation}\label{eq: XSigma(n)}
X_{\Sigma,\K}(n)=
\mathcal{Z}_{\KS}(\K^n,(\K^n)^*)/G_{\Sigma,\K},
\quad
\mbox{where}
\end{equation}
\begin{equation}
q_{n,\K}:\mathcal{Z}_{\mathcal{K}_{\Sigma}}(\K^n,(\K^{n} )^*)
\to
X_{\Sigma,\K}(n)=
\mathcal{Z}_{\mathcal{K}_{\Sigma}}(\K^n,(\K^{n} )^*)/G_{\Sigma,\K}
\end{equation}
denotes the corresponding canonical projection.
In particular, we also write
\begin{equation}
\XS (n)=X_{\Sigma,\C}(n) \ \mbox{ and }\ G_{\Sigma}=G_{\Sigma,\C}
\ \ \mbox{ if }\K=\C.
\end{equation}
}
\end{dfn}
\begin{thm}[\cite{Cox1}, Theorem 2.1]\label{prp: Cox}
If the set $\{\textit{\textbf{n}}_k\}_{k=1}^r$ 
of all primitive generators  spans $\R^m$
$($i.e. $\sum_{k=1}^r\R\cdot \textbf{\textit{n}}_k=\R^m)$,
there is a natural isomorphism
\begin{equation}\label{equ: homogenous}
\XS \cong \mathcal{Z}_{\KS}(\C,\C^*)/\GS=\XS (1)=X_{\Sigma,\C}(1).
\end{equation}
Hence, we can identify  $\XS (n)$ with the toric variety $\XS$ if $n=1$.
\qed
\end{thm}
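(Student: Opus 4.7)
This is Cox's homogeneous coordinate construction, so the plan is to identify $\XS$ with the categorical quotient $\mathcal{Z}_{\KS}(\C,\C^*)/\GS$ one affine chart at a time. The first step is to exploit the spanning hypothesis $\sum_{k=1}^{r}\R\cdot\textbf{\textit{n}}_k=\R^m$, which makes the $\Z$-linear map $\alpha:\Z^m\to\Z^r$, $\alpha(\textbf{\textit{m}})=(\langle\textbf{\textit{n}}_k,\textbf{\textit{m}}\rangle)_{k=1}^r$, injective and yields a short exact sequence
\begin{equation*}
0 \longrightarrow \Z^m \stackrel{\alpha}{\longrightarrow} \Z^r \longrightarrow \mathrm{coker}(\alpha) \longrightarrow 0.
\end{equation*}
Applying $\mathrm{Hom}(-,\C^*)$ produces the dual sequence
\begin{equation*}
1 \longrightarrow \GS \longrightarrow (\C^*)^r \longrightarrow \T^m_{\C} \longrightarrow 1,
\end{equation*}
whose middle term visibly matches the subgroup $\GS$ of Definition \ref{dfn: fan}.

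Next I would construct the quotient map on an affine open cover. For each cone $\sigma\in\Sigma$, set $\hat\sigma=\{k\in [r]:\rho_k\text{ is a ray of }\sigma\}\in\KS$ and
\begin{equation*}
U_\sigma=\{(x_1,\ldots,x_r)\in\C^r:x_k\neq 0\text{ for every }k\notin\hat\sigma\}.
\end{equation*}
Since $\KS$ is closed under subsets, every subset of $\hat\sigma$ lies in $\KS$, so (\ref{eq: LSigma}) gives $U_\sigma\subset\mathcal{Z}_{\KS}(\C,\C^*)$; conversely, every point of $\mathcal{Z}_{\KS}(\C,\C^*)$ has zero-set equal to $\hat\sigma$ for some $\sigma\in\Sigma$ and so lies in the corresponding $U_\sigma$. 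Hence $\{U_\sigma\}_{\sigma\in\Sigma}$ is a $\GS$-invariant affine open cover. On each chart define $\Phi_\sigma:U_\sigma\to X_\sigma:=\mathrm{Spec}\,\C[\sigma^{\vee}\cap\Z^m]$ by the ring homomorphism $\chi^{\textbf{\textit{m}}}\mapsto\prod_{k=1}^{r} x_k^{\langle\textbf{\textit{n}}_k,\textbf{\textit{m}}\rangle}$; this is well-defined because $\textbf{\textit{m}}\in\sigma^{\vee}$ forces non-negative exponents on all $k\in\hat\sigma$, while the remaining $x_k$ are invertible on $U_\sigma$.

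The key step is then to show that each $\Phi_\sigma$ is constant on $\GS$-orbits and induces an isomorphism $U_\sigma/\GS\stackrel{\sim}{\to}X_\sigma$. Equivariance is immediate from the definition of $\GS$, and the statement that the fibers of $\Phi_\sigma$ are exactly the $\GS$-orbits reduces via the exact sequences above to the assertion that two points of $U_\sigma$ agreeing under every monomial $\prod_k x_k^{\langle\textbf{\textit{n}}_k,\textbf{\textit{m}}\rangle}$ must differ by an element of $\GS$. Once the chart-wise identification is in hand, I would patch on overlaps $U_\sigma\cap U_\tau=U_{\sigma\cap\tau}$, using the fact that the affine charts $X_\sigma,X_\tau$ are already glued along $X_{\sigma\cap\tau}$ in the fan-theoretic construction of $\XS$, to assemble the global isomorphism and deduce the final identification $\XS(n)=\XS$ for $n=1$ directly from (\ref{eq: XSigma(n)}).

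I expect the main obstacle to be the orbit-fiber verification on each chart: one must combine the strong convexity of $\sigma$ (which makes $\sigma^{\vee}\cap\Z^m$ a saturated, full-dimensional semigroup rich enough to separate $\GS$-orbits on $U_\sigma$) with the spanning hypothesis (which rules out a residual torus factor and ensures the quotient has the expected dimension $m$). Everything else is a standard chart-by-chart matching and patching argument.
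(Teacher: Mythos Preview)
The paper does not give its own proof of this theorem: the statement is simply quoted from \cite{Cox1} and closed with a \qed\ symbol, which in this paper is the convention for results cited without proof. Your proposal is a correct sketch of Cox's original argument---the short exact sequence identifying $\GS$, the affine cover $\{U_\sigma\}_{\sigma\in\Sigma}$ of $\mathcal{Z}_{\KS}(\C,\C^*)$, the chart maps $\Phi_\sigma$ to $X_\sigma=\mathrm{Spec}\,\C[\sigma^\vee\cap\Z^m]$, and the gluing---and the obstacles you flag (separating $\GS$-orbits using strong convexity and the spanning hypothesis) are exactly the ones Cox handles in \cite[\S2]{Cox1}. There is nothing to compare against here beyond noting that the authors defer entirely to the reference.
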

\begin{rmk}\label{rmk: fan}
{\rm
Let $\Sigma$ be a fan in $\R^m$ 
as in Definition \ref{dfn: fan}.
Then the fan $\Sigma$ is completely determined by
the pair
$(\mathcal{K}_{\Sigma},\{\textit{\textbf{n}}_k\}_{k=1}^r)$ 
(see \cite[Remark 2.3]{KY9} for the details).
\qed
}
\end{rmk}
For each $1\leq i\leq r$,
let 
$F_i=(f_{1;i},\cdots ,f_{n;i})
\in \K [z_0,\cdots ,z_s]^n$
 be an $n$-tuple of homogenous polynomials of
the same degree $d_i$ satisfying the following condition:
\begin{enumerate}
\item[(\ref{equ: map F}$)^*$]
For each $\sigma\in I(\KS)$,
the homogenous polynomials $\{f_{k;i}\}_{k\in \sigma}$ have no common 
{\it real} root
except ${\bf 0}_{s+1}\in \R^{s+1}.$
\end{enumerate}
In this situation, 
consider the map
\begin{equation}\label{equ: map F}
F=(F_1,\cdots,F_r):\R^{s+1}\setminus \{{\bf 0}_{s+1}\}\to(\K^n)^r=\K^{rn}
\quad
\mbox{given by}
\end{equation}
\begin{equation*}
\begin{cases}
F(\textbf{\textit{x}})
&=
(F_1(\textbf{\textit{x}}),\cdots F_r(\textbf{\textit{x}}))
\quad
\mbox{ for }\textbf{\textit{x}}\in \R^{m+1}\setminus \{{\bf 0}_{m+1}\},
\\
F_i(\textbf{\textit{x}})&=(f_{1;i}(\textbf{\textit{x}}),
f_{2;i}(\textbf{\textit{x}}),\cdots ,f_{n;i}(\textbf{\textit{x}}))
\quad
\mbox{for }1\leq i\leq r.
\end{cases}
\end{equation*}
By  the assumption (\ref{equ: map F}$)^*$, 
homogenous polynomials
$\{f_{k;i}\}_{k\in \sigma}$
have no common real root except ${\bf 0}_{s+1}\in \R^{s+1}$
for each $1\leq i\leq r$
and $\sigma\in I(\KS)$.
Thus, we see that the image of the map $F$ is contained in
$\mathcal{Z}_{\KS}(\K^n,(\K^n)^*)$, and
we may regard
the map $F$  as the map
\begin{equation}\label{eq: the map F}
F=(F_1,\cdots ,F_r):\R^{s+1}\setminus \{{\bf 0}_{s+1}\}\to 
\mathcal{Z}_{\KS}(\K^n,(\K^n)^*).
\end{equation}
 The following lemma, whose proof we
 postpone until the end of \S \ref{section: polyhedral products},
 plays a crucial role in the proof of the main result of this paper.
\begin{lmm}[cf. \cite{Cox2}, Theorem 3.1; \cite{KOY1}, Lemma 2.6]
\label{lmm: Cox tric}
Suppose that
the set $\{\textit{\textbf{n}}_k\}_{k=1}^r$ 
of all primitive generators  spans $\R^m$.
For each $1\leq i\leq r$ and $\sigma\in I(\KS)$, let
$F_i=(f_{1;i},\cdots ,f_{n;i})
\in \K [z_0,\cdots ,z_s]^n$
 be an $n$-tuple of homogenous polynomials of
the same degree $d_i$ satisfying the condition
(\ref{equ: map F}$)^*$.
\par
Then 
there is a unique map
$f:\RP^s\to X_{\Sigma,\K} (n)$ such that the diagram
\begin{equation}\label{equ: condition dk}
\begin{CD}
\R^{s+1}\setminus \{{\bf 0}_{s+1}\} @>(F_1,\cdots ,F_r)>> 
\mathcal{Z}_{\KS}(\K^n,(\K^n)^*)
\\
@V{\gamma_s}VV @V{q_{n,\K}}VV
\\
\RP^s @>f>> \mathcal{Z}_{\KS}(\K^n,(\K^n)^*)/G_{\Sigma,\K}=X_{\Sigma,\K} (n)
\end{CD}
\end{equation}
is commutative
if and only if  the  condition 
$\sum_{k=1}^rd_k\textit{\textbf{n}}_k={\bf 0}_m.$ holds.
\par
Here, $\gamma_s:\R^{s+1}\setminus \{{\bf 0}_{s+1}\}\to \RP^s$ denotes
the canonical double covering, and the map $F=(F_1,\cdots ,F_r)$ is given by
(\ref{eq: the map F}).
\end{lmm}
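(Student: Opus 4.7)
The plan is to reduce existence of $f$ to membership of a one-parameter subgroup of $(\K^*)^r$ in $\GSK$, then verify that this membership is equivalent to $\sum_{k=1}^{r}d_k\mathbf{n}_k=\mathbf{0}_m$ by a direct calculation using the defining relation of $\GSK$. Since each $f_{k;i}$ is a homogeneous polynomial of degree $d_i$, the homogeneity identity
$$F(\lambda\mathbf{x})=\Phi(\lambda)\cdot F(\mathbf{x}),\qquad \Phi(\lambda):=(\lambda^{d_1},\ldots,\lambda^{d_r})\in(\K^*)^r,$$
holds for every $\lambda\in\R^*$ and $\mathbf{x}\in\R^{s+1}\setminus\{\mathbf{0}_{s+1}\}$, where the dot is the coordinate-wise action of $(\K^*)^r$ given by (\ref{eq: coordinate multiplication}). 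Since $\gamma_s$ is the surjective quotient for the $\R^*$-action on $\R^{s+1}\setminus\{\mathbf{0}_{s+1}\}$, a unique $f:\RP^s\to X_{\Sigma,\K}(n)$ with $f\circ\gamma_s=q_{n,\K}\circ F$ exists precisely when $F(\lambda\mathbf{x})$ and $F(\mathbf{x})$ lie in a common $\GSK$-orbit for every $\lambda$ and $\mathbf{x}$.

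Substituting $\mu_k=\lambda^{d_k}$ into the defining relation for $\GSK$ yields
$$\prod_{k=1}^{r}(\lambda^{d_k})^{\langle\mathbf{n}_k,\mathbf{m}\rangle}
=\lambda^{\langle\sum_{k=1}^{r}d_k\mathbf{n}_k,\,\mathbf{m}\rangle}\qquad(\mathbf{m}\in\Z^m),$$
which equals $1$ for every $\lambda\in\R^*$ and every $\mathbf{m}\in\Z^m$ if and only if $\langle\sum_kd_k\mathbf{n}_k,\mathbf{m}\rangle=0$ for every $\mathbf{m}\in\Z^m$, and since $\Z^m$ is a full-rank lattice in $\R^m$ this holds if and only if $\sum_{k=1}^{r}d_k\mathbf{n}_k=\mathbf{0}_m$. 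Under this last condition $\Phi(\lambda)\in \GSK$ for every $\lambda\in\R^*$, so $q_{n,\K}\circ F$ is $\R^*$-invariant and descends uniquely and continuously to the required map $f$; this yields the \emph{if} direction.

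For the converse, assume $f$ exists and choose $\mathbf{x}_0\in\R^{s+1}\setminus\{\mathbf{0}_{s+1}\}$ with $F_i(\mathbf{x}_0)\neq\mathbf{0}_n$ for every $i\in[r]$: condition (\ref{equ: map F}$)^*$ forbids any single $F_i$ from vanishing identically in the relevant cases, so each $\{F_i=\mathbf{0}_n\}$ is a proper real algebraic subset of $\R^{s+1}$ and their finite union cannot exhaust $\R^{s+1}\setminus\{\mathbf{0}_{s+1}\}$. At such $\mathbf{x}_0$ the $(\K^*)^r$-stabilizer of $F(\mathbf{x}_0)$ is trivial (because $\mu\mathbf{y}=\mathbf{y}$ forces $\mu=1$ when $\mathbf{y}\neq\mathbf{0}_n$), so combining the homogeneity identity $F(\lambda\mathbf{x}_0)=\Phi(\lambda)\cdot F(\mathbf{x}_0)$ with the orbit identity $F(\lambda\mathbf{x}_0)=g_\lambda\cdot F(\mathbf{x}_0)$ for some $g_\lambda\in\GSK$ (provided by the assumed factorization) forces $\Phi(\lambda)=g_\lambda\in\GSK$ for every $\lambda\in\R^*$, and the displayed computation then gives $\sum_k d_k\mathbf{n}_k=\mathbf{0}_m$. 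Uniqueness of $f$ is immediate from surjectivity of $\gamma_s$.

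The main obstacle is the genericity step, namely ruling out the degenerate situation in which some $F_i$ could vanish identically. This would require $\{i\}$ together with every compatible $\tau$ to lie in $\KS$, a severe restriction on $\KS$ that is typically excluded by the simply connected non-singular hypothesis on $\XS$ (in particular ensuring $[r]\notin\KS$, so that $[r]\in I(\KS)$ and condition (\ref{equ: map F}$)^*$ directly supplies $\mathbf{x}_0$). In full generality one handles the remaining cases by determining $\Phi(\lambda)$ one coordinate at a time, using several choices of $\mathbf{x}_0$ that collectively keep each $F_i$ away from zero.
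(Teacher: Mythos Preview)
Your proof is correct and follows the same route as the paper's: you compute the homogeneity identity $F(\lambda\mathbf{x})=(\lambda^{d_1},\ldots,\lambda^{d_r})\cdot F(\mathbf{x})$, reduce the descent question to whether $(\lambda^{d_1},\ldots,\lambda^{d_r})\in G_{\Sigma,\K}$ for all $\lambda\in\R^*$, and verify this via the defining relations of $G_{\Sigma,\K}$.

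The only difference is in the converse direction. The paper simply asserts that ``$F(\lambda\mathbf x)$ and $F(\mathbf x)$ lie in the same $G_{\Sigma,\K}$-orbit'' is equivalent to ``$(\lambda^{d_1},\ldots,\lambda^{d_r})\in G_{\Sigma,\K}$'' without comment, whereas you supply the missing step by choosing $\mathbf{x}_0$ with all $F_i(\mathbf{x}_0)\neq\mathbf{0}_n$ and using that the $(\K^*)^r$-stabilizer of such a point is trivial. So your argument is in fact a bit more complete. Your final paragraph's concern about some $F_i$ vanishing identically is over-cautious: under the natural reading that a homogeneous polynomial of degree $d_i\ge 1$ is nonzero, each $\{F_i=\mathbf{0}_n\}$ is automatically a proper real algebraic subset of $\R^{s+1}$, and the finite union cannot exhaust $\R^{s+1}\setminus\{\mathbf{0}_{s+1}\}$, so a suitable $\mathbf{x}_0$ always exists and no case-by-case patching is needed.
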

\paragraph{2.3 Assumptions.}
From now on, let $\Sigma$ be a fan in $\R^m$
as in Definition \ref{dfn: fan}, 
and we always assume that $\XS$ is simply connected and non-singular.
Moreover,
we shall assume  the following condition holds.
\begin{enumerate}
\item[$($\ref{equ: condition dk}$)^*$]
There is an $r$-tuple
$D_*=(d_1^*,\cdots ,d_r^*)\in \N^r$ such that
$\sum_{k=1}^rd_k^*\textit{\textbf{n}}_k={\bf 0}_m.$
\end{enumerate}
\begin{rmk}\label{rmk: assumption}
{\rm
(i) 
It follows from \cite[Theorem 12.1.10]{CLS} that
$\XS$ is simply connected if and only if the following condition $(\dagger\dagger)$
holds:
\begin{enumerate}
\item[($\dagger\dagger$)]
The set $\{\textbf{\textit{n}}_k\}_{k=1}^r$ of all primitive generators spans
$\Z^m$ over $\Z$, i.e.
\newline
$\sum_{k=1}^r\Z\cdot \textbf{\textit{n}}_k=\Z^m$.
\end{enumerate}
Thus we see that if $\XS$ is simply connected then
the set $\{\textit{\textbf{n}}_k\}_{k=1}^r$ 
of all primitive generators  spans $\R^m$.
In particular,  if $\XS$ is a compact smooth toric variety then $\XS$ is simply connected
(see Lemma \ref{lmm: toric}).
\par
(ii)
We make the identification $\RP^1=S^1=\R \cup \infty$ and choose the points $\infty$ and $[1,1,\cdots ,1]$ as the base points of
$\RP^1$ and $\XS$, respectively.
Then, by setting $z=\frac{z_0}{z_1}$,  for each $1\leq k \leq r$,
we can view $f_k$ as a monic polynomial  $f_k(z)\in\K [z]$
of degree $d_k$ in the real variable  $z$.
\qed
}
\end{rmk}

\paragraph{2.4 Spaces of algebraic maps of real bounded multiplicity.}
Now
 we can define the space of algebraic maps as follows.
 
\begin{dfn}\label{dfn: poly}
{\rm
From now on, let $\K=\C$ or $\R$ as before.
\par
(i)
For a monic polynomial $f(z)\in \K[z]$
of degree $d$, let $F_n(f)(z)$ denote the $n$-tuple of monic polynomials 
of the same degree $d$ defined by
\begin{equation}\label{eq: Fn}
F_n(f)(z)=(f(z),f(z)+f^{\p}(z),f(z)+f^{\p\p}(z),\cdots ,f(z)+f^{(n-1)}(z)).
\end{equation}
Note that a monic polynomial $f(z)\in \K [z]$ has a root $\alpha\in \C$
of multiplicity $\geq n$ iff
$F_n(f)(\alpha)={\bf 0}_n\in \C^n.$
\par\vspace{1mm}\par
(ii)
For each $D=(d_1,\cdots ,d_r)\in \N^r$
and a fan $\Sigma$ in $\R^m$,
let $\Q^{D,\Sigma}_n(\K)$ denote the space of
$r$-tuples
$
(f_1(z),\cdots ,f_r(z))\in  \K [z]^r
$
of $\K$-coefficients
monic polynomials satisfying the  conditions
(\ref{eq: dpoly}a) and (\ref{eq: dpoly}b)
(as in Definition \ref{dfn: Pol(K)}).
}
\end{dfn}
\begin{rmk}\label{rmk: 2.8}
{\rm 
(i)
Note that $\Q^{D,\Sigma}_n(\C)$ is path-connected, and that
$\Q^{D,\Sigma}_n(\R)$ is path-connected if
$(n,\rmin(\Sigma))\not=(1,2)$ 
(which will be explained in Remark \ref{rmk: ESigma}).
\par
(ii)
Let $\Z_2=\{\pm 1\}$ denote the multiplicative cyclic group of order $2$, and
let  $X^{\Z_2}$ denote the $\Z_2$-fixed point set of a $\Z_2$-space $X$
as in (\ref{eq: XH}).
Complex conjugation on $\C$  extends to a $\Z_2$-actions on
the spaces
$\mathcal{Z}_{\KS}(\C^n,(\C^n)^*)$ and
$\Q^{D,\Sigma}_n(\C)$ such that
\begin{equation}\label{eq: 2.23}
\mathcal{Z}_{\KS}(\R^n,(\R^n)^*)
=\mathcal{Z}_{\KS}(\C^n,(\C^n)^*)^{\Z_2},
\quad
\Q^{D,\Sigma}_n(\R)=\Q^{D,\Sigma}_n(\C)^{\Z_2}.
\end{equation}
It is easy to see that complex conjugation on $\C$ also
naturally extends to a $\Z_2$-action on
$X_{\Sigma}(n)=\mathcal{Z}_{\KS}(\C^n,(\C^n)^*)/G_{\Sigma}$ such that
\begin{equation}\label{eq: 2.24}
X_{\Sigma,\R}(n)=\XS (n)^{\Z_2}.
\end{equation}
It easily follows from the definition of the above actions that the following diagram is commutative:
\begin{equation}\label{CD: Z2-equivarinat maps}
\begin{CD}
\Q^{D,\Sigma}_n(\R) 
@>q_{n,\R}>> X_{\Sigma,\R}(n)
=\mathcal{Z}_{\KS}(\R^n,(\R^n)^*)/G_{\Sigma,\R}
\\
@V{i_n^D}V{\cap}V @V{i_n^X}V{\cap}V
\\
\Q^{D,\Sigma}_n(\C) @>q_{n,\C}>> X_{\Sigma}(n)
=\mathcal{Z}_{\KS}(\C^n,(\C^n)^*)/G_{\Sigma,\C}
\end{CD}
\end{equation}
where let $i_n^D$ and $i_n^X$
denote he corresponding inclusion maps.

Remark that $\Q^{D,\Sigma}_n(\C)$ (resp. $\Q^{D,\Sigma}_n(\R)$) is simply connected 
if the condition $($\ref{1.4}$)^*$ 
(resp. $($\ref{1.4}$)^{\dagger}$) is satisfied
(which will be proved in Corollary \ref{crl: 1-connected}).
}
\end{rmk}
\begin{dfn}
{\rm
Suppose that
the condition (\ref{equ: condition dk}$)^*$ holds, and let
$D=(d_1,\cdots ,d_r)\in \N^r$ be an $r$-tuple of positive integers satisfying the condition
\begin{equation}
\sum_{k=1}^rd_k\n_k={\bf 0}_m.
\end{equation}
\par
(i)
First, consider the case $\K=\C$.
By Lemma \ref{lmm: Cox tric},
one can define a map
\begin{equation}\label{eq: iD-map}
i_{D,n,\C}:\Q^{D,\Sigma}_n(\C)\to \Omega \XS (n)
\qquad
\mbox{by}
\end{equation}
\begin{equation*}
i_{D,n,\C}(f)(\alpha)
=
\begin{cases}
[F_n(f_1)(\alpha),F_n(f_2)(\alpha),\cdots ,F_n(f_r)(\alpha)]
& \mbox{if }\alpha \in \R
\\
[\textbf{\textit{e}},\textbf{\textit{e}},
\cdots ,\textbf{\textit{e}}]
& \mbox{if }\alpha =\infty
\end{cases}
\end{equation*}
for $f=(f_1(z),\cdots ,f_r(z))\in \Q^{D,\Sigma}_n(\C)$ and
$\alpha \in \R \cup \infty =S^1$,
where 
we set
$\textbf{\textit{e}}=(1,1,\cdots ,1)\in \C^n$.
\par\vspace{2mm}\par
 Since
the space $\Q^{D,\Sigma}_n(\C)$ is simply connected and 
$\Omega q_{n,\C}$ is a universal covering (by (iv)  of Remark \ref{rmk: 2.8}
and (ii) of Corollary \ref{crl: universal covering}),
the map $i_{D,n,\C}$ lifts to the space
$\Omega \mathcal{Z}_{\KS}(D^{2n},S^{2n-1})$, and there is a based map
\begin{equation}\label{eq: def. jn}
j_{D,n,\C}:\Q^{D,\Sigma}_n(\C)\to
\Omega \mathcal{Z}_{\KS}(D^{2n},S^{2n-1})
\simeq
\Omega  \mathcal{Z}_{\KS}(\C^n,(\C^n)^*)
\end{equation}
such that the following equality holds:
\begin{equation}
\Omega q_{n,\C}\circ j_{D,n,\C}=i_{D,n,\C}.
\end{equation}
\par
(ii)
Next, consider the case $\K=\R$.
\par\vspace{1mm}\par
Recall the $\Z_2$-action on the spaces
$\Q^{D,\Sigma}_n(\C)$ and $\XS$ induced from complex conjugation on $\C$, and
remark that
the map $i_{D,n,\C}$ is a $\Z_2$-equivariant map.
Then, by (\ref{eq: 2.23}) and (\ref{eq: 2.24}), we see that
\begin{equation}
i_{D,n,\C}(\Q^{D,\Sigma}_n(\R))\subset \Omega \XS (n)^{\Z_2}
=\Omega X_{\Sigma,\R}(n).
\end{equation}
Thus, the restriction  $i_{D,n,\C}\vert \Q^{D,\Sigma}_n(\R)$ defines the map
\begin{equation}
i_{D,n,\R}=i_{D,n,\C}\vert \Q^{D,\Sigma}_n(\R): \Q^{D,\Sigma}_n(\R)\to \Omega X_{\Sigma,\R}(n)
\end{equation}
such that the following diagram is commutative:
\begin{equation}
\begin{CD}
\Q^{D,\Sigma}_n(\R)@>i_{D,n,\R}>> \Omega X_{\Sigma,\R}(n)
@<\Omega q_{n,\R}<\simeq< 
\Omega \mathcal{Z}_{\KS}(\R^n,(\R^n)^*)
\\
@V{i_{n}^D}VV @V{\Omega i_{n}^X}VV @V{\Omega j_n}VV
\\
\Q^{D,\Sigma}_n(\C)@>i_{D,n,\C}>> \Omega X_{\Sigma}(n)
@<\Omega q_{n,\C}<< \Omega \mathcal{Z}_{\KS}(\C^n,(\C^n)^*)
\end{CD}
\end{equation}
where the $j_n:\mathcal{Z}_{\KS}(\R^n,(\R^n)^*)
\stackrel{\subset}{\longrightarrow}
\mathcal{Z}_{\KS}(\C^n,(\C^n)^*)$ denotes the inclusion map.
Note that $\Omega q_{n,\R}$ is a homotopy equivalence
(which will be proved in Corollary \ref{crl: universal covering}).
Thus, there is a based map
\begin{equation}\label{eq: the map jDR}
j_{D,n,\R}:\Q^{D,\Sigma}_n(\R) \to 
\Omega \mathcal{Z}_{\KS}(\R^n,(\R^n)^*)
\simeq
\Omega \mathcal{Z}_{\KS}(D^n,S^{n-1})
\end{equation}
which satisfies the following equality:
\begin{equation}\label{eq: j-map K=R}
\Omega q_{n,\R}\circ j_{D,n,\R}=i_{D,n,\R}
\quad
(\mbox{up to homotopy}).
\end{equation}
}
\end{dfn}
\begin{rmk}
{\rm
When $\sum_{k=1}^rd_k\n_k={\bf 0}_n$, 
by (\ref{eq: def. jn}) and (\ref{eq: j-map K=R}), we obtain the map
\begin{equation}
j_{D,n,\K}:\Q^{D,\Sigma}_n(\K)
\to
\Omega \mathcal{Z}_{\KS}(\K^n,(\K^n)^*)\simeq
\Omega \mathcal{Z}_{\KS}(D^{d(\K)n},S^{d(\K)n-1}),
\end{equation}
where the number $d(\K)$ is defined by
\begin{equation}\label{eq: dimK}
d(\K)=\dim_{\R} \K=
\begin{cases}
2 & \mbox{ if }\K=\C,
\\
1 & \mbox{ if }\K=\R.
\end{cases}
\end{equation}
}
\end{rmk}
\paragraph{2.5 The numbers $\rmin (\Sigma)$ and
$d(D;\Sigma,n,\K).$}
Before stating the main results of this paper, we need to define the positive integers
$\rmin (\Sigma)$ and
$d(D;\Sigma,n,\K)$ (which already appeared in the statements of our results). 
\begin{dfn}\label{dfn: rnumber}
{\rm
(i)
We say that a set $S=\{\textbf{\textit{n}}_{i_1},\cdots ,\textbf{\textit{n}}_{i_s}\}$
is {\it a primitive collection} 
 if  $\mbox{Cone}(S)\notin\Sigma$ and $\mbox{Cone}(T)\in \Sigma$ for any
 proper subset $T\subsetneqq S$.
\par
(ii) For each $r$-tuple $D=(d_1,\cdots ,d_r)\in \N^r$,
 define  the positive integer $d(D,\Sigma,n,\K)$  by
\begin{align}\label{eq: dDSigma}
d(D;\Sigma ,n,\K)
&=
\begin{cases}
(2n\rmin (\Sigma)  -2)\lfloor \frac{\dmin}{n}\rfloor -2 & \mbox{ if }\K=\C
\\
(n\rmin (\Sigma)  -2)\lfloor \frac{\dmin}{n}\rfloor -2 & \mbox{ if }\K=\R
\end{cases}
\end{align}
as in (\ref{eq: number d(D)}),
where  
$\rmin (\Sigma)$ and $d_{\rm min}$ are the positive integers given by
\begin{equation}\label{eq: rmin}
\begin{cases}
\rmin (\Sigma)
=\min\{s\in\N :
\{\textbf{\textit{n}}_{i_1},\cdots ,\textbf{\textit{n}}_{i_s}\}
\mbox{ is a primitive collection}\},
\\
d_{\rm min}=
\min\{d_1,d_2,\cdots ,d_r\}.
\end{cases}
\end{equation}
Note that 
\begin{equation}\label{eq: rmin >1}
\rmin (\Sigma)\geq 2.
\end{equation}
}
\end{dfn}

\paragraph{2.6 The main results.}
Note that the space $\Q^{D,\Sigma}_n(\C)$ has already been extensively
studied in the case $n=1$  in \cite{KY11}.
The main purpose of this paper is to generalize the  results
of \cite{KY11}
to the space $\Q^{D,\Sigma}_n(\K)$
for $\K=\C$ or $\R$ and for any $n\geq 1$.
Indeed,
the main results of this article are stated as follows.
\begin{thm}[The case $\K=\C$]\label{thm: I} 
Let $n\in \N$, let
$D=(d_1,\cdots ,d_r)\in \N^r$,
and let $\XS$ be an $m$ dimensional simply connected non-singular toric variety 
such that 
the  two conditions $($\ref{equ: condition dk}$)^*$ and $($\ref{1.4}$)^*$ holds.
\par
$\I$ If
$\sum_{k=1}^rd_k\textbf{\textit{n}}_k={\bf 0}_m$, then 
the  map $($given by $($\ref{eq: def. jn}$))$
$$
j_{D,n,\C}:\Q^{D,\Sigma}_n(\C)\to 
\Omega \mathcal{Z}_{\KS}(D^{2n},S^{2n-1})
$$ 
is a homotopy equivalence
through dimension $d(D;\Sigma ,n,\C)$.
\par
$\II$
If
$\sum_{k=1}^rd_k\textbf{\textit{n}}_k\not= {\bf 0}_m$,  
there is a map
$$
j_{D,n,\C}:\Q^{D,\Sigma}_n(\C) \to  
\Omega \mathcal{Z}_{\KS}(D^{2n},S^{2n-1})
$$ 
which
is a homotopy equivalence
through dimension $d(D;\Sigma ,n,\C)$.\footnote{%
This map has to be constructed in a slightly different way from the one in $\I$ but we shall use the same notation for both.}
\end{thm}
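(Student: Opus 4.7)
The plan is to prove Theorem~\ref{thm: I} by combining a Vassiliev-type homology stability theorem for the family $\{\Q^{D,\Sigma}_n(\C)\}_D$ with the scanning identification of the stable limit with $\Omega\mathcal{Z}_{\KS}(D^{2n},S^{2n-1})$, and then upgrading the resulting homology equivalence through dimension $d(D;\Sigma,n,\C)$ to a homotopy equivalence via Whitehead's theorem using the simple connectivity of both source and target.

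First I would realize $\Q^{D,\Sigma}_n(\C)$ as the complement, inside the affine space of $r$-tuples of $\C$-coefficient monic polynomials of degrees $(d_1,\dots,d_r)$, of the discriminant consisting of those tuples for which some sub-collection indexed by a $\sigma\in I(\KS)$ shares a common complex root of multiplicity $\geq n$ (i.e.\ the image under $F_n$ vanishes simultaneously at a point of $\C$). Following the program of Sections~\ref{section: simplicial resolution}--\ref{section: homology stability}, I would apply Vassiliev's non-degenerate simplicial resolution to this discriminant and derive the associated spectral sequence, which converges via Alexander duality to $H^*(\Q^{D,\Sigma}_n(\C))$. The numerical coefficient $2n\rmin(\Sigma)-2$ appearing in $d(D;\Sigma,n,\C)$ originates from the fact that each primitive collection of cardinality $s$ contributes real codimension $2ns$ to the associated discriminant stratum, and the smallest such $s$ equals $\rmin(\Sigma)$. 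The factor $\lfloor d_{\min}/n\rfloor$ bounds the number of independent multiplicity-$n$ coincidence conditions that a polynomial of degree $d_{\min}$ can accommodate.

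Second, the stabilization maps of Section~\ref{section: stabilization map} give rise in Section~\ref{section: homology stability} to a truncated spectral sequence that agrees through total degree $d(D;\Sigma,n,\C)$ with that of the stable limit. Combined with Theorem~\ref{thm: scanning map}, which identifies the stable limit with $\Omega\mathcal{Z}_{\KS}(\C^n,(\C^n)^*)\simeq \Omega\mathcal{Z}_{\KS}(D^{2n},S^{2n-1})$, this shows that $j_{D,n,\C}$ is a homology equivalence through dimension $d(D;\Sigma,n,\C)$. In case~$\I$, where $\sum_k d_k\n_k=\mathbf{0}_m$, Lemma~\ref{lmm: Cox tric} produces the map $i_{D,n,\C}$ of (\ref{eq: iD-map}); its lift through the universal cover $\Omega q_{n,\C}$ (afforded by Corollary~\ref{crl: 1-connected}) yields $j_{D,n,\C}$. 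In case~$\II$ the hypothesis of Lemma~\ref{lmm: Cox tric} fails, so I would instead define $j_{D,n,\C}$ intrinsically via the scanning construction of Section~\ref{section: scanning maps}, which needs no degree condition. Simple connectivity of $\Q^{D,\Sigma}_n(\C)$ under $(\ref{1.4})^*$ (Corollary~\ref{crl: 1-connected}) together with simple connectivity of the loop target (since $\mathcal{Z}_{\KS}(D^{2n},S^{2n-1})$ is at least $2$-connected when $n\geq 1$ and $\rmin(\Sigma)\geq 2$) then converts the homology equivalence into the claimed homotopy equivalence via the relative Hurewicz theorem.

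The main technical obstacle will be the stratification-by-stratification dimension count that forces the comparison of Vassiliev spectral sequences to be an $E^1$-isomorphism exactly through dimension $d(D;\Sigma,n,\C)$. In particular one must show that the first stratum that could spoil the comparison---corresponding to roughly $\lfloor d_{\min}/n\rfloor +1$ coincidence conditions along a primitive collection of minimal size $\rmin(\Sigma)$---lies strictly above that dimension, and that the stabilization maps induce isomorphisms on the relevant $E^1$-columns. The secondary difficulty in case~$\II$ is producing $j_{D,n,\C}$ without recourse to a lift of $i_{D,n,\C}$; this is handled by the scanning formalism of Section~\ref{section: scanning maps}, whose output must be shown to be compatible with the case~$\I$ construction up to homotopy in order to justify using the same notation.
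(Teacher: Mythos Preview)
Your overall architecture matches the paper's: Vassiliev spectral sequence for homology stability of the stabilization maps (Theorem~\ref{thm: III}, Corollary~\ref{crl: III*}), upgrade to homotopy stability via simple connectivity (Corollary~\ref{crl: III+1connected}), identify the stable limit via scanning (Theorems~\ref{thm: scanning map} and~\ref{thm: V}), and combine.

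There is, however, a genuine slip in your setup. You describe the discriminant as the locus where some sub-collection indexed by $\sigma\in I(\KS)$ shares a common \emph{complex} root of multiplicity $\geq n$, with $F_{(n)}(\mathrm{f})$ vanishing at a point of $\C$. That is the discriminant for $\po^{D,\Sigma}_n(\C)$, not for $\Q^{D,\Sigma}_n(\C)$. By Definition~\ref{dfn: Pol(K)} the condition concerns common \emph{real} roots only, so the tautological normalization lives in $\Sigma_D\times\R$ (Definition~\ref{Def: 3.1}(iv)) and the configuration spaces $C_{k;\Sigma,\C}$ are built from points of $\R\times L^{\KS}_n(\C)$. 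This one-dimensional parameter, rather than a two-dimensional one, is exactly what yields the coefficient $2n\rmin(\Sigma)-2$ in $d(D;\Sigma,n,\C)$ instead of the $2n\rmin(\Sigma)-3$ of $d_{\rm poly}$ in (\ref{eq: dpoly}), and what makes the target a single loop space $\Omega\mathcal{Z}_{\KS}(D^{2n},S^{2n-1})$ rather than $\Omega^2$. Your stated numerics are the correct ones, but the discriminant you described would not produce them.

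For part~$\II$ your plan to build $j_{D,n,\C}$ directly from the scanning construction is viable, but the paper takes a shorter route. Using condition~$(\ref{equ: condition dk})^*$, pick $D_*\in\N^r$ with $\sum_k d_k^*\n_k=\mathbf{0}_m$, choose $m_0$ large enough that $D_0:=m_0D_*$ dominates $D$ componentwise, and set
\[
j_{D,n,\C}:=j_{D_0,n,\C}\circ s_{D,D_0}.
\]
Since $s_{D,D_0}$ is a homotopy equivalence through dimension $d(D;\Sigma,n,\C)$ by Corollary~\ref{crl: III+1connected}, and $j_{D_0,n,\C}$ is one through the larger number $d(D_0;\Sigma,n,\C)$ by part~$\I$, the composite works. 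This sidesteps the compatibility check you flagged as a secondary difficulty.
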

\begin{thm}[The case $\K=\R$]\label{thm: I R} 
Let $n\in \N$, let
$D=(d_1,\cdots ,d_r)\in \N^r$,
and let $\XS$ be an $m$ dimensional simply connected non-singular toric variety 
such that 
the two conditions $($\ref{equ: condition dk}$)^*$ and $($\ref{1.4}$)^{\dagger}$ hold.
\par
$\I$ If
$\sum_{k=1}^rd_k\textbf{\textit{n}}_k={\bf 0}_m$, then 
the  map $($given by $($\ref{eq: the map jDR}$))$
$$
j_{D,n,\R}:\Q^{D,\Sigma}_n(\R)\to 
\Omega \mathcal{Z}_{\KS}(D^{n},S^{n-1})
$$ 
is a homotopy equivalence
through dimension $d(D;\Sigma ,n,\R)$.
\par
$\II$
If
$\sum_{k=1}^rd_k\textbf{\textit{n}}_k\not= {\bf 0}_m$,  
there is a map
$$
j_{D,n,\R}:\Q^{D,\Sigma}_n(\R) \to  
\Omega \mathcal{Z}_{\KS}(D^{n},S^{n-1})
$$ 
which
is a homotopy equivalence
through dimension $d(D;\Sigma ,n,\R)$.
\end{thm}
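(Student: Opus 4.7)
My plan mirrors the complex-discriminant argument of Theorem \ref{thm: main result of KY12} of \cite{KY12}, modified so that only \emph{real} roots contribute to the discriminant. The numerical change $2n\rmin(\Sigma)\to n\rmin(\Sigma)$ in (\ref{eq: dDSigma}) directly reflects that a common real root of multiplicity $\geq n$ for a primitive collection of size $\rmin(\Sigma)$ imposes real codimension $n\rmin(\Sigma)-1$ rather than $2n\rmin(\Sigma)-1$ as in the complex setting. Concretely, I would realize $\Q^{D,\Sigma}_n(\R)$ as the complement in the affine space $\R^{|D|}$ ($|D|=\sum d_k$) of the real discriminant
$$\Sigma^{D}_{n,\R} = \bigcup_{\sigma\in I(\KS)} \bigl\{(f_1,\dots,f_r) : \{f_i\}_{i\in\sigma}\ \text{share a real root of multiplicity}\ \geq n\bigr\},$$
and apply the non-degenerate simplicial resolution developed in \S\ref{section: simplicial resolution}; Alexander duality in $\R^{|D|}$ then produces a Vassiliev-type spectral sequence converging to $H^*(\Q^{D,\Sigma}_n(\R);\Z)$, whose $E^1$-page is assembled from configuration spaces of real points labeled by primitive collections.

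Next I would use the stabilization maps of \S\ref{section: stabilization map} to compare the spectral sequences for $D$ and $D+\textbf{\textit{a}}$. A truncated spectral sequence argument (\S\ref{section: homology stability}) should establish homology stability Theorem \ref{thm: III (KY16)} through dimension $d(D;\Sigma,n,\R)$: the smallest-codimension stratum consists of $\lfloor \dmin/n\rfloor$ real points each carrying a common multiplicity-$n$ root for a minimal primitive collection, contributing real codimension $(n\rmin(\Sigma)-1)\lfloor\dmin/n\rfloor$, from which the $-2$ adjustment arises via the Alexander-duality shift in the target loop space. To identify the stable limit, I would invoke the horizontal scanning model of \S\ref{section: scanning maps}: an $r$-tuple of real monic polynomials is encoded by its configuration of real roots labeled by multiplicities, and Theorem \ref{thm: scanning map} combined with the stability Theorem \ref{thm: V} identifies the stabilized homotopy type with $\Omega\mathcal{Z}_{\KS}(\R^n,(\R^n)^*)\simeq\Omega\mathcal{Z}_{\KS}(D^n,S^{n-1})$. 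In case (i) the map $j_{D,n,\R}$ is the lift of $i_{D,n,\R}$ along the equivalence $\Omega q_{n,\R}$ as in (\ref{eq: the map jDR}); in case (ii) the hypothesis of Lemma \ref{lmm: Cox tric} fails, so $j_{D,n,\R}$ has to be defined directly from the scanning construction, but once defined the preceding steps force it to be a homology equivalence through dimension $d(D;\Sigma,n,\R)$.

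Finally, to upgrade from homology to homotopy equivalence in the same range, I would use that condition $(\ref{1.4})^{\dagger}$ makes the source $\Q^{D,\Sigma}_n(\R)$ simply connected by Corollary \ref{crl: 1-connected}; the restriction $(n,\rmin(\Sigma))\neq(1,2)$ is precisely what is needed so that the target polyhedral-product loop space $\Omega\mathcal{Z}_{\KS}(D^n,S^{n-1})$ is also simply connected (being a loop space on a sufficiently connected polyhedral product). A standard Whitehead-type argument then promotes the homology equivalence through dimension $d(D;\Sigma,n,\R)$ to the desired homotopy equivalence, completing both parts of the theorem.

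The hardest step will be extracting the sharp homology stability range in \S\ref{section: homology stability}: the real Vassiliev $E^1$-page involves configuration bundles whose fibers are no longer complex cells, and one must carefully track signs and the $\Z_2$-action of complex conjugation appearing in diagram (\ref{CD: Z2-equivarinat maps}) in order to identify the correct primitive-collection codimension as exactly $n\rmin(\Sigma)-1$ rather than a weaker bound. A secondary technical obstacle is constructing the scanning map in case (ii) when $\sum_{k}d_k\n_k\neq{\bf 0}_m$ and verifying its compatibility with the stabilization structure so that Theorem \ref{thm: V} and the homology stability range together yield the claimed conclusion in that non-covering case.
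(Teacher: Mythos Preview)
Your outline for part (i) is essentially the paper's argument: the Vassiliev spectral sequence for the real discriminant (\S\ref{section: simplicial resolution}), the truncated spectral sequence comparison giving homology stability (Theorem \ref{thm: III (KY16)}), the scanning identification of the stable limit (Theorems \ref{thm: scanning map K=R} and \ref{thm: V}), and the upgrade to homotopy via simple connectivity (Corollary \ref{crl: 1-connected}) all match. One small refinement: the homology-to-homotopy upgrade is applied in the paper to the \emph{stabilization maps} $s_{D,D+\textbf{\textit{a}}}^{\R}$ (Corollary \ref{crl: III+1connected}), whose source and target are both $\Q$-spaces and hence simply connected under $(\ref{1.4})^{\dagger}$; the simple connectivity of the target $\Omega\mathcal{Z}_{\KS}(D^n,S^{n-1})$ then follows from Theorem \ref{thm: V} rather than being checked independently from polyhedral-product connectivity.

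For part (ii) your plan takes an unnecessary detour. You propose to construct $j_{D,n,\R}$ ``directly from the scanning construction'' and flag this as a secondary technical obstacle. The paper avoids this entirely: the hypothesis $(\ref{equ: condition dk})^*$ supplies an $r$-tuple $D_*=(d_1^*,\dots,d_r^*)\in\N^r$ with $\sum_k d_k^*\textbf{\textit{n}}_k=\textbf{0}_m$, so one chooses $m_0\in\N$ large enough that $D_0:=m_0D_*$ dominates $D$ componentwise and simply sets
\[
j_{D,n,\R}:=j_{D_0,n,\R}\circ s_{D,D_0}^{\R}.
\]
Since $\sum_k (m_0d_k^*)\textbf{\textit{n}}_k=\textbf{0}_m$, part (i) applies to $j_{D_0,n,\R}$, and $s_{D,D_0}^{\R}$ is a homotopy equivalence through dimension $d(D;\Sigma,n,\R)\leq d(D_0;\Sigma,n,\R)$ by Corollary \ref{crl: III+1connected}. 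This reduces (ii) to (i) in two lines, with no need to revisit scanning or handle any ``non-covering case.''
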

\begin{crl}\label{crl: I}
Let $n\in \N$, let
$D=(d_1,\cdots ,d_r)\in \N^r$.
and let $\XS$ be an $m$ dimensional simply connected non-singular toric variety 
such that 
the two conditions $($\ref{equ: condition dk}$)^*$ and $($\ref{1.4}$)^*$ hold.
\par
$\I$ If
$\sum_{k=1}^rd_k\textbf{\textit{n}}_k={\bf 0}_m$, then 
the  map
$
i_{D,n,\C}:\Q^{D,\Sigma}_n(\C) \to 
\Omega \XS (n)
$ 
induces an isomorphism
$$
(i_{D,n,\C})_*:\pi_s(\Q^{D,\Sigma}_n(\C)) \stackrel{\cong}{\longrightarrow} 
\pi_s(\Omega \XS)\cong \pi_{s+1}(\XS (n))
$$
for any $2\leq s\leq d(D;\Sigma,n,\C)$.
\par
$\II$
If
$\sum_{k=1}^rd_k\textbf{\textit{n}}_k\not= {\bf 0}_m$,  
the map 
$i_{D,n,\C}:\Q^{D,\Sigma}_n(\C)\to \Omega \XS(n)$ defined by
\begin{equation}
i_{D,n,\C}:=\Omega q_{n,\C}\circ j_{D,n,\C}
\end{equation}
induces an isomorphism 
$$
(i_{D,n,\C})_*:\pi_s(\Q^{D,\Sigma}_n(\C)) \stackrel{\cong}{\longrightarrow} 
\pi_s(\Omega \XS (n))\cong \pi_{s+1}(\XS (n))
$$
for any $2\leq s\leq d(D;\Sigma,n,\C)$.
\end{crl}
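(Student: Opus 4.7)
The plan is to deduce Corollary \ref{crl: I} directly from Theorem \ref{thm: I} together with the covering-space property of $\Omega q_{n,\C}$, so that the proof is essentially formal once the main theorem is available. The crucial observation is that in both situations (I) and (II) the map $i_{D,n,\C}$ admits the factorization
$$i_{D,n,\C} = \Omega q_{n,\C}\circ j_{D,n,\C},$$
where in case (I) this factorization is the defining property of the lift $j_{D,n,\C}$ produced via Lemma \ref{lmm: Cox tric} and equation (\ref{eq: def. jn}), while in case (II) the equation is taken as the definition of $i_{D,n,\C}$ in the statement of the corollary.

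By Theorem \ref{thm: I}, $j_{D,n,\C}$ is a homotopy equivalence through dimension $d(D;\Sigma,n,\C)$, so $(j_{D,n,\C})_*$ is an isomorphism on $\pi_s$ for every $s \leq d(D;\Sigma,n,\C)$. On the other hand, by Corollary \ref{crl: universal covering} the loop map $\Omega q_{n,\C}:\Omega \mathcal{Z}_{\KS}(\C^n,(\C^n)^*) \to \Omega \XS(n)$ is a universal covering projection, so $(\Omega q_{n,\C})_*$ is an isomorphism on $\pi_s$ for every $s \geq 2$. Composing the two isomorphisms yields
$$(i_{D,n,\C})_*:\pi_s(\Q^{D,\Sigma}_n(\C))\stackrel{\cong}{\longrightarrow}\pi_s(\Omega\XS(n))$$
for every $2 \leq s \leq d(D;\Sigma,n,\C)$, and the identification $\pi_s(\Omega\XS(n))\cong \pi_{s+1}(\XS(n))$ is the standard loop-space adjunction.

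The restriction $s\geq 2$ is intrinsic to this argument rather than a gap to be filled: at the $\pi_1$ level the covering map $\Omega q_{n,\C}$ is typically not injective, since $\pi_0$ of the fiber $G_{\Sigma,\C}$ contributes to $\pi_1(\Omega\XS(n))\cong \pi_2(\XS(n))$. Any attempt to extend the result to $s=1$ would therefore require independent information on $\pi_2(\XS(n))$, which is precisely what is encoded by passing to the simply connected cover. Consequently I do not anticipate any essential obstacle in the proof beyond bookkeeping; the whole content of the corollary is packaged inside Theorem \ref{thm: I} plus the covering structure of Corollary \ref{crl: universal covering}.
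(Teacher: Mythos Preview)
Your proposal is correct and follows essentially the same approach as the paper: both deduce the corollary from Theorem \ref{thm: I} together with the fact (Corollary \ref{crl: universal covering}) that $\Omega q_{n,\C}$ is a universal covering, hence induces isomorphisms on $\pi_s$ for $s\geq 2$. Your write-up is in fact more explicit than the paper's, which simply records the composite $\Omega \mathcal{Z}_{\KS}(D^{2n},S^{2n-1})\simeq \Omega \mathcal{Z}_{\KS}(\C^n,(\C^n)^*)\stackrel{\Omega q_{n,\C}}{\longrightarrow}\Omega\XS(n)$ and invokes the two cited results.
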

Consider the $\Z_2$-action on the spaces
$\Q^{D,\Sigma}_n(\C)$ and $\mathcal{Z}_{\KS}(D^{2n},S^{2n-1})$
induced from the complex conjugation on $\C$, where we identify
\begin{equation}
D^{2n}=\{(x_1,\cdots ,x_n)\in \C^n:\sum_{k=1}^n|x_k|^2\leq 1\}.
\end{equation}
Note that we can regard the space $D^{2n}$ as a $\Z_2$-space whose $\Z_2$ action is given by
the complex conjugation.
\begin{equation}
(-1)\cdot (x_1,\cdots ,x_n)=
(\overline{x_1},\cdots ,\overline{x_n})
\quad
\mbox{for }(x_1,\cdots ,x_n)\in D^{2n}.
\end{equation}
\par
Since $\Q^{D,\Sigma}_n(\R)=\Q^{D,\Sigma}_n(\C)^{\Z_2},$ 
$\mathcal{Z}_{\KS}(D^{n},S^{n-1})
=\mathcal{Z}_{\KS}(D^{2n},S^{2n-1})^{\Z_2}$,
and 
$j_{D,n,\R}=(j_{D,n,\C})^{\Z_2}$,
we also obtain the following result.
\begin{crl}\label{crl: cor S1}
Let $n\in \N$, let
$D=(d_1,\cdots ,d_r)\in \N^r$, 
and
let $\XS$ be an $m$ dimensional simply connected non-singular toric variety 
satisfying
the two conditions $($\ref{equ: condition dk}$)^*$ and $($\ref{1.4}$)^{\dagger}$.
Then the map
$$
j_{D,n,\C}:\Q^{D,\Sigma}_n(\C) \to 
\Omega \mathcal{Z}_{\KS}(D^{2n},S^{2n-1})
$$ 
is a $\Z_2$-equivariant homotopy equivalence through dimension
$d(D;\Sigma,n,\R)$.
\qed
\end{crl}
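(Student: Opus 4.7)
The plan is to unpack the definition of $\Z_2$-equivariant homotopy equivalence through dimension $N$ and reduce everything to the two main theorems of the paper. Since $\Z_2$ has only two subgroups, I would only need to verify (a) that $j_{D,n,\C}$ itself is a (non-equivariant) homotopy equivalence through dimension $d(D;\Sigma,n,\R)$, and (b) that the restriction $(j_{D,n,\C})^{\Z_2}$ to $\Z_2$-fixed subspaces is a homotopy equivalence through dimension $d(D;\Sigma,n,\R)$.

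For part (a) I would apply Theorem \ref{thm: I}: its hypothesis $(\ref{1.4})^*$ is implied by the standing assumption $(\ref{1.4})^{\dagger}$, so $j_{D,n,\C}$ is already a homotopy equivalence through the larger dimension $d(D;\Sigma,n,\C)$. Comparing the two expressions in (\ref{eq: number d(D)}) gives
\begin{equation*}
d(D;\Sigma,n,\C) - d(D;\Sigma,n,\R) = n\rmin(\Sigma)\lfloor \dmin/n\rfloor \geq 0,
\end{equation*}
so the same conclusion holds through the smaller dimension $d(D;\Sigma,n,\R)$ required here.

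For part (b) I would invoke the identifications
\begin{equation*}
\Q^{D,\Sigma}_n(\R)=\Q^{D,\Sigma}_n(\C)^{\Z_2}, \qquad
\mathcal{Z}_{\KS}(D^n,S^{n-1})=\mathcal{Z}_{\KS}(D^{2n},S^{2n-1})^{\Z_2},
\end{equation*}
together with the equality $j_{D,n,\R}=(j_{D,n,\C})^{\Z_2}$ recalled immediately before the corollary. These identify the fixed-point restriction of $j_{D,n,\C}$ with $j_{D,n,\R}$, and Theorem \ref{thm: I R} (whose hypothesis $(\ref{1.4})^{\dagger}$ is exactly what we assume) then yields that $j_{D,n,\R}$ is a homotopy equivalence through dimension $d(D;\Sigma,n,\R)$.

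The argument therefore reduces to the two main theorems plus the numerical comparison above; the only non-cosmetic issue --- and thus the main obstacle, such as it is --- is confirming that passage to $\Z_2$-fixed points commutes with the based loop functor, i.e.\ that $(\Omega \mathcal{Z}_{\KS}(D^{2n},S^{2n-1}))^{\Z_2} = \Omega \mathcal{Z}_{\KS}(D^n,S^{n-1})$. This will hold because the chosen basepoint $[\textbf{\textit{e}},\dots,\textbf{\textit{e}}]$ is real, hence $\Z_2$-fixed, and a loop is conjugation-invariant if and only if its value at every time is fixed.
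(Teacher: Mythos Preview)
Your proposal is correct and follows essentially the same route as the paper: the paper's justification (given in the sentence immediately preceding the corollary, together with the \qed) amounts precisely to invoking the identifications $\Q^{D,\Sigma}_n(\R)=\Q^{D,\Sigma}_n(\C)^{\Z_2}$, $\mathcal{Z}_{\KS}(D^{n},S^{n-1})=\mathcal{Z}_{\KS}(D^{2n},S^{2n-1})^{\Z_2}$, and $j_{D,n,\R}=(j_{D,n,\C})^{\Z_2}$, and then appealing to Theorems \ref{thm: I} and \ref{thm: I R}. Your write-up is in fact slightly more detailed than the paper's, since you make explicit the numerical inequality $d(D;\Sigma,n,\R)\le d(D;\Sigma,n,\C)$ and the compatibility of $\Z_2$-fixed points with the loop functor.
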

Finally, we easily obtain the following two corollaries.
\begin{crl}\label{crl: II}
Let $n\in \N$, let
$D=(d_1,\cdots ,d_r)\in \N^r$,
and let $\XS$ be a simply connected compact non-singular toric variety
such that 
the the condition $($\ref{equ: condition dk}$)^*$  holds.
Let $\Sigma (1)$ denote the set of all one dimensional cones in $\Sigma$, 
and let $\Sigma_1$ be any fan in $\R^m$ satisfying the condition
\begin{equation}\label{eq: fan Sigma_1}
\Sigma (1)\subset \Sigma_1\subsetneqq \Sigma.
\end{equation}
\par
$\I$
Then  $X_{\Sigma_1}$ is a non-compact smooth 
toric subvariety of $\XS$.
\par
$\II$
If the condition $($\ref{1.4}$)^*$ holds and
$\sum_{k=1}^rd_k\textbf{\textit{n}}_k={\bf 0}_m$, then
the map
$$
j_{D,n,\C}:\Q^{D,\Sigma_1}_n(\C)\to \Omega \mathcal{Z}_{\Sigma_1}(D^{2n},S^{2n-1})
$$ 
 is a
homotopy equivalence through the dimension $d(D;\Sigma_1,n,\C)$.
\par
Moreover, the map
$i_{D,n,\C}:\Q^{D,\Sigma_1}_n(\C)\to \Omega X_{\Sigma_1}$ induces an isomorphism
$$
(i_{D,n,\C})_*:\pi_s(\Q^{D,\Sigma_1}_n(\C)) \stackrel{\cong}{\longrightarrow} 
\pi_s(\Omega X_{\Sigma_1}(n))\cong \pi_{s+1}(X_{\Sigma_1}(n))
$$
for any $2\leq s\leq d(D;\Sigma_1,n,\C)$.
\par
$\III$
If the condition $($\ref{1.4}$)^*$ holds and
$\sum_{k=1}^rd_k\textbf{\textit{n}}_k\not={\bf 0}_m$, then
there is a map
$$
j_{D,n,\C}:\Q^{D,\Sigma_1}_n(\C)\to 
\Omega \mathcal{Z}_{\mathcal{K}_{\Sigma_1}}(D^{2n},S^{2n-1})
$$ 
which is a
homotopy equivalence through dimension $d(D;\Sigma_1,n,\C)$.
\par
Moreover,
the map 
$i_{D,n,\C}:\Q^{D,\Sigma_1}_n(\C)\to \Omega X_{\Sigma_1}$ defined by
\begin{equation}
i_{D,n,\C}:=\Omega q_{n,\C}\circ j_{D,n,\C}
\end{equation}
induces an isomorphism 
$$
(i_{D,n,\C})_*:\pi_s(\Q^{D,\Sigma_1}_n(\C)) \stackrel{\cong}{\longrightarrow} 
\pi_s(\Omega X_{\Sigma_1}(n))\cong \pi_{s+1}(X_{\Sigma_1}(n))
$$
for any $2\leq s\leq d(D;\Sigma_1,n,\C)$.
\par
$\IV$
If the condition $($\ref{1.4}$)^{\dagger}$ holds and
$\sum_{k=1}^rd_k\textbf{\textit{n}}_k={\bf 0}_m$, then
the map
$$
j_{D,n,\R}:\Q^{D,\Sigma_1}_n(\R)\to \Omega \mathcal{Z}_{\Sigma_1}(D^{n},S^{n-1})
$$ 
 is a
homotopy equivalence through the dimension $d(D;\Sigma_1,n,\K)$.
\par
$\V$
If the condition $($\ref{1.4}$)^{\dagger}$ holds and
$\sum_{k=1}^rd_k\textbf{\textit{n}}_k\not={\bf 0}_m$, there is a map
$$
j_{D,n,\R}:\Q^{D,\Sigma_1}_n(\R)\to 
\Omega \mathcal{Z}_{\mathcal{K}_{\Sigma_1}}(D^{n},S^{n-1})
$$ 
which is a
homotopy equivalence through dimension $d(D;\Sigma_1,n,\R)$.
\end{crl}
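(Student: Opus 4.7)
The plan is to reduce Corollary \ref{crl: II} to direct applications of Theorems \ref{thm: I} and \ref{thm: I R} with $\Sigma_1$ in place of $\Sigma$. The hypothesis $\Sigma(1)\subset \Sigma_1\subsetneqq \Sigma$ forces the equality $\Sigma_1(1)=\Sigma(1)$, so the set $\{\nn_k\}_{k=1}^r$ of primitive generators is common to the two fans. This equality is what makes the whole argument work, since the data entering the main theorems (namely $D$, the primitive generators, and $\mathcal{K}_{\Sigma_1}$) are determined by $\Sigma_1(1)$ and the combinatorics of $\Sigma_1$.

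For part (I), I would first invoke the standard gluing construction for toric varieties (see \cite{CLS}): the inclusion $\Sigma_1\subset \Sigma$ induces a torus-equivariant open immersion $X_{\Sigma_1}\hookrightarrow \XS$, exhibiting $X_{\Sigma_1}$ as a toric subvariety. Each cone of $\Sigma_1$ is a cone of the smooth fan $\Sigma$, hence smooth (its primitive generators form part of a $\Z$-basis of $\Z^m$), so $X_{\Sigma_1}$ is non-singular. Since $\XS$ is compact, $\Sigma$ is complete, so the strict inclusion $\Sigma_1\subsetneqq \Sigma$ implies that $\Sigma_1$ is not complete and $X_{\Sigma_1}$ is non-compact. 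Finally, by the criterion \cite[Theorem 12.1.10]{CLS} recorded in Remark \ref{rmk: assumption}(i), simple connectedness of $\XS$ is equivalent to $\sum_{k=1}^r \Z\cdot \nn_k=\Z^m$; since $\Sigma_1$ shares the same primitive generators, $X_{\Sigma_1}$ is likewise simply connected.

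For parts (II)--(V) I would verify that all hypotheses of Theorems \ref{thm: I} and \ref{thm: I R} transfer from $\Sigma$ to $\Sigma_1$. Condition $(\ref{equ: condition dk})^*$ depends only on the primitive generators $\{\nn_k\}_{k=1}^r$, which are identical for the two fans, so it passes to $\Sigma_1$. Conditions $(\ref{1.4})^*$ and $(\ref{1.4})^{\dagger}$ involve only $\dmin$ (which depends on $D$) and $\rmin(\Sigma_1)$, and are assumed in the respective parts of the corollary. Combined with part (I), this ensures that $X_{\Sigma_1}$ fulfils all standing hypotheses on the toric variety in Theorems \ref{thm: I} and \ref{thm: I R}. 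Hence parts (II), (III), (IV), (V) are precisely the statements of Theorems \ref{thm: I}(i), \ref{thm: I}(ii), \ref{thm: I R}(i), \ref{thm: I R}(ii) applied to $\Sigma_1$. The supplementary $\pi_*$-isomorphism assertions in (II) and (III) are obtained by postcomposing $(j_{D,n,\C})_*$ with $(\Omega q_{n,\C})_*$, which is an isomorphism in the relevant range since $\Omega q_{n,\C}$ is a universal covering (Corollary \ref{crl: universal covering}) and the source is simply connected; this is exactly the content of Corollary \ref{crl: I} applied to $\Sigma_1$.

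The only nontrivial point is part (I), and specifically the preservation of simple connectedness, which could fail if $\Sigma_1$ were allowed to drop one-dimensional cones; the hypothesis $\Sigma(1)\subset \Sigma_1$ is exactly what rules this out, and the integral-span criterion in Remark \ref{rmk: assumption}(i) then immediately gives the conclusion. Once (I) is settled, parts (II)--(V) require no new topological input and follow formally from the two main theorems.
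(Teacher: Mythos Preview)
Your proposal is correct and is exactly the deduction the paper has in mind: the authors state Corollary \ref{crl: II} without proof, prefacing it with ``we easily obtain the following'', so the intended argument is precisely the reduction to Theorems \ref{thm: I} and \ref{thm: I R} (and Corollary \ref{crl: I}) applied to $\Sigma_1$, once one observes that $\Sigma_1(1)=\Sigma(1)$ and checks that the standing hypotheses (simple connectedness, non-singularity, condition $(\ref{equ: condition dk})^*$) transfer to $\Sigma_1$. Your treatment of part (I), including the use of the integral-span criterion of Remark \ref{rmk: assumption}(i) to carry simple connectedness from $\XS$ to $X_{\Sigma_1}$, is exactly the verification that makes the reduction legitimate.
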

\section{Basic facts about toric varieties}
\label{section: polyhedral products}

In this section, we recall some basic definitions and known results.

\begin{dfn}[\cite{BP}, Definition 6.27, Example 6.39]\label{def: moment}
{\rm
Let $K$ be a simplicial complex on the index set $[r]$, and
let $I(K)=\{\sigma\subset [r]:\sigma\notin K\}$ as in (\ref{eq: IK}).
\par
(i) 
An element $\sigma\in I(K)$ is called
{\it a minimal non-face of $K$}
if $\tau\in K$ for any proper subset $\tau\subsetneqq \sigma$.
\par
(ii)
Then 
we denote by $I_{\rm min}(K)$ the set of all minimal non-faces of $K$.
It is easy to see that the following equality holds.
\begin{align}\label{eq: minimal non-face}
K&=
\{\sigma\subset [r]:
\tau\not\subset \sigma\mbox{ for any }\tau\in I_{\rm min}(K)\}.
\end{align}
\par
(iii)
We denote by $\mathcal{Z}_K$ and $DJ(K)$
{\it the moment-angle complex} of $K$ and 
{\it the Davis-Januszkiewicz space} of $K$ 
which are
defined by}
\begin{equation}\label{DJ}
\mathcal{Z}_K=\mathcal{Z}_K(D^2,S^1),\quad
DJ(K)=\mathcal{Z}_K(\CP^{\infty},*).
\qquad\qquad
\qquad
\qed
\end{equation}
\end{dfn}
\begin{rmk}
{\rm 
Let $\Sigma$ be a fan in $\R^m$ and let
$\XS$ be a smooth toric variety such that the condition 
$($\ref{equ: condition dk}$)^*$ holds.
Then it is easy to see that 
$\{\n_{i_1},\n_{i_2},\cdots ,\n_{i_s}\}$ is primitive
if and only if
$\sigma=\{i_1,i_2,\cdots ,i_s\}\in
I_{\rm min}(\KS).$
Thus, we also obtain the following equality:
\begin{equation}\label{eq: rmin min=mini sigma}
\rmin (\Sigma)=\min\{\mbox{card}(\sigma):\sigma\in I(\KS)\}.
\qquad
\qed
\end{equation}
}
\end{rmk}
\begin{lmm}[\cite{BP}; Corollary 6.30, Theorems 6.33,
8.9]\label{Lemma: BP}
Let $K$ be a simplicial complex on the index set $[r]$.
\par
$\I$
The space $\mathcal{Z}_K$ is $2$-connected, and
there is a fibration sequence
\begin{equation}
\mathcal{Z}_K \stackrel{}{\longrightarrow} DJ(K)
\stackrel{\subset}{\longrightarrow} (\CP^{\infty})^r.
\end{equation}
\par
$\II$
There are $\T^r$-equivariant deformation retraction
\begin{equation}\label{eq: retr}
ret:\mathcal{Z}_K(\K^n,(\K^n)^*)\stackrel{\simeq}{\longrightarrow}
\mathcal{Z}_K(D^{d(\K)n},S^{d(\K)n-1}).
\end{equation}
where
we set $\T^r=(S^1)^r$.
\qed
\end{lmm}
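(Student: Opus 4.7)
The plan is to address part (II) first as it is more elementary, then take up part (I).

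For (II), I would construct a pair-wise deformation retraction $r_t:\K^n\to\K^n$ of $(\K^n,(\K^n)^*)$ onto $(D^{d(\K)n},S^{d(\K)n-1})$: for instance take $r_t(x)=\phi_t(\|x\|)\cdot x$, where $\phi_t$ is chosen so that $r_0=\mbox{id}$ and $r_1$ radially projects $\K^n$ onto the closed unit disk and $(\K^n)^*$ onto the unit sphere. Since $r_t$ commutes with scalar multiplication by every $\mu\in\K^*$, and in particular by $\mu\in S^1$, the coordinate-wise $r$-fold product $R_t=r_t\times\cdots\times r_t:(\K^n)^r\to(\K^n)^r$ is $\T^r$-equivariant and preserves each subspace $(\K^n,(\K^n)^*)^\sigma$, mapping it onto $(D^{d(\K)n},S^{d(\K)n-1})^\sigma$. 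Taking the union over $\sigma\in K$ gives the desired $\T^r$-equivariant deformation retraction \eqref{eq: retr}.

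For the fibration in (I), the key identification is that $DJ(K)=\mathcal{Z}_K(\CP^\infty,*)$ is homotopy equivalent to the Borel construction $E\T^r\times_{\T^r}\mathcal{Z}_K$: writing $E\T^r=(S^\infty)^r$, one checks coordinate-wise that $(S^\infty\times D^2)/S^1\simeq \CP^\infty$ and $(S^\infty\times S^1)/S^1\simeq S^\infty\simeq *$, so the Borel construction of $\mathcal{Z}_K=\mathcal{Z}_K(D^2,S^1)$ is the polyhedral product $\mathcal{Z}_K(\CP^\infty,*)=DJ(K)$. Projecting the Borel construction onto $B\T^r=(\CP^\infty)^r$ yields the fibration
\[
\mathcal{Z}_K\longrightarrow DJ(K)\stackrel{\subset}{\longrightarrow}(\CP^\infty)^r,
\]
whose second map coincides with the polyhedral-product inclusion $\mathcal{Z}_K(\CP^\infty,*)\hookrightarrow\mathcal{Z}_{\Delta^{r-1}}(\CP^\infty,*)=(\CP^\infty)^r$.

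Finally, the 2-connectivity of $\mathcal{Z}_K$ follows from the long exact homotopy sequence of this fibration. We have $\pi_k((\CP^\infty)^r)=\Z^r$ for $k=2$ and $0$ otherwise, and $DJ(K)$ is simply connected. When $K$ contains every singleton $\{i\}$ (as is the case for $K=\KS$, since each primitive generator $\n_k$ spans a $1$-cone of $\Sigma$, so $\{k\}\in\KS$), a Stanley--Reisner computation gives $H^2(DJ(K);\Z)\cong\Z^r$ with the map to $H^2((\CP^\infty)^r)$ an isomorphism; by Hurewicz, $\pi_2(DJ(K))\to\pi_2((\CP^\infty)^r)$ is therefore also an isomorphism. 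The exact sequence then forces $\pi_0(\mathcal{Z}_K)=\pi_1(\mathcal{Z}_K)=\pi_2(\mathcal{Z}_K)=0$. The main obstacle is precisely this input on $\pi_2(DJ(K))$; once that standard computation is in hand, everything else is formal.
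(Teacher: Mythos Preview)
The paper does not actually prove this lemma; it simply cites \cite{BP} (Corollary 6.30, Theorems 6.33, 8.9) and places a \qed. Your sketch reproduces the standard Buchstaber--Panov arguments that lie behind that citation, so in spirit you are doing exactly what the reference does.

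Two small points. First, in part (II) your radial retraction $r_t(x)=\phi_t(\|x\|)\,x$ does \emph{not} commute with scalar multiplication by arbitrary $\mu\in\K^*$ (since $\|\mu x\|=|\mu|\,\|x\|$); it only commutes with $\mu\in S^1$. That is all you need for $\T^r$-equivariance, so the argument survives, but the parenthetical about $\K^*$ should be dropped. Second, for the $2$-connectedness in part (I) you correctly flag that the argument via $\pi_2(DJ(K))\cong\Z^r$ requires every singleton $\{i\}$ to lie in $K$. The lemma as stated in the paper is for arbitrary $K$ on $[r]$, but in the Buchstaber--Panov setting (and in every application in this paper, where $K=\KS$ or $K=\KS(n)$) the vertex set is the full $[r]$, so this hypothesis is in force. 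With those caveats your outline is correct and matches the cited source.
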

\begin{lmm}[\cite{Pa1}]
\label{lmm: principal}
Let $\Sigma$ be a fan in $\R^m$ and let
$\XS$ be a smooth toric variety such that the condition 
$($\ref{equ: condition dk}$)^*$ holds.
\par
$\I$
There is an isomorphism 
\begin{equation}\label{eq: GS-eq}
G_{\Sigma,\K}\cong \T^{r-m}_{\K}=(\K^*)^{r-m}.
\end{equation}
\par
$\II$
The group $G_{\Sigma,\K}$ acts on the space 
$\mathcal{Z}_{\mathcal{K}_{\Sigma}}(\K^n,(\K^n)^*)$
freely as in (\ref{eq: coordinate multiplication})
and
 there is a principal
$G_{\Sigma,\K}$-bundle sequence
\begin{equation}\label{eq: principal}
G_{\Sigma,\K} \stackrel{}{\longrightarrow}
\mathcal{Z}_{\KS}(\K^n,(\K^n)^*)
\stackrel{q_{n,\K}}{\longrightarrow}
X_{\Sigma,\K}.
\end{equation}
\par
$\III$
If $\K=\R$, there is a homotopy equivalence
$\T^r_{\R}\simeq (\Z_2)^{r-m}$ and
the map
$q_{n,\R}$ is a covering projection with fiber 
$(\Z_2)^{r-m}$ (up to homotopy).
\begin{proof}
First, consider the case $\K=\C$.
Then the assertions (i) and (ii) follow from \cite[(6.2) page 527; Proposition 6.7]{Pa1}.
\par
Next, let $\K=\R$.
Since $G_{\Sigma}=\GS\cong (\C^*)^{r-m}$ and $G_{\Sigma,\R}=
G_{\Sigma}\cap (\R^*)^r$, we have the isomorphism
$G_{\Sigma,\R}\cong (\R^*)^{r-m}=\T^{r-m}_{\R}$.
Since $\GS$ acts on the space $\mathcal{Z}_{\KS}(\C^n,(\C^n)^*)$ freely,
the subgroup $\G_{\Sigma,\R}$ also acts on the space
$\mathcal{Z}_{\KS}(\R^n,(\R^n)^*)$ freely and we obtain the 
$G_{\Sigma,\R}$-principal fibration sequence (\ref{eq: principal}) for the case $\K=\R$.
This proves (i) and (ii)  for the case $\K=\R$.
Since $G_{\Sigma,\R}\simeq (\Z_2)^{r-m}$, $q_{n,\R}$ is a covering projection with fiber
$(\Z_2)^{r-m}$, and we obtain (iii).
\end{proof}
\end{lmm}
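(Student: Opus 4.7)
The plan is to treat (I) and (II) for $\K=\C$ using the character-lattice machinery underlying the Cox homogeneous coordinate construction, and then deduce the real case by intersection with $(\R^*)^r$. For (I) in the complex case, I would first invoke the smoothness of $\XS$ together with condition $(\ref{equ: condition dk})^*$ to conclude that the primitive generators $\{\textbf{\textit{n}}_k\}_{k=1}^r$ span $\Z^m$ over $\Z$ (since smoothness forces every cone to be generated by part of a $\Z$-basis, and $(\ref{equ: condition dk})^*$ prevents all $\textbf{\textit{n}}_k$ from lying in a common half-space, guaranteeing a top-dimensional cone). Hence $\textbf{\textit{e}}_k \mapsto \textbf{\textit{n}}_k$ yields a surjection $\Z^r \twoheadrightarrow \Z^m$ that splits over $\Z$, and applying $\mathrm{Hom}_\Z(-,\C^*)$ gives a split short exact sequence
\[
1 \to \T^{r-m}_\C \to \T^r_\C \to \T^m_\C \to 1.
\]
Unwinding the definition of $\GSC$ identifies it with the kernel on the left, proving (I) for $\K=\C$.

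For (II) with $\K=\C$, I would verify freeness pointwise. Given $(\textbf{\textit{x}}_1,\dots,\textbf{\textit{x}}_r) \in \mathcal{Z}_{\KS}(\C^n,(\C^n)^*)$, the complement $S=\{i:\textbf{\textit{x}}_i={\bf 0}_n\}$ lies in $\KS$ by definition of the polyhedral product. A stabilizing $\mu \in \GSC$ must then satisfy $\mu_i=1$ for $i \notin S$, and the non-singularity of $\XS$ forces $\{\textbf{\textit{n}}_k\}_{k \in S}$ to be part of a $\Z$-basis, so the defining relations $\prod_k \mu_k^{\langle \textbf{\textit{n}}_k,\textbf{\textit{m}}\rangle}=1$ of $\GSC$ restrict to a system that forces $\mu_k=1$ for $k \in S$ as well. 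The principal bundle structure (\ref{eq: principal}) then follows by standard descent for free actions of complex tori on quasi-affine varieties, exactly as in \cite{Pa1}.

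For the real analogues of (I) and (II), I would observe that $\GSR = \GSC \cap (\R^*)^r$, and since the splitting $\Z^r \cong \Z^{r-m} \oplus \Z^m$ is defined over $\Z$, passing to $\mathrm{Hom}_\Z(-,\R^*)$ gives a parallel split sequence identifying $\GSR \cong (\R^*)^{r-m} = \T^{r-m}_\R$. The freeness of the $\GSR$-action on $\mathcal{Z}_{\KS}(\R^n,(\R^n)^*)$ is then immediate by restriction of the complex action, yielding the real principal bundle sequence (\ref{eq: principal}). Part (III) will follow because $\R^*$ deformation retracts onto $\{\pm 1\}$, giving the homotopy equivalence $\GSR \simeq (\Z_2)^{r-m}$, so that $q_{n,\R}$ is a principal bundle whose fibre is, up to homotopy, a discrete group of order $2^{r-m}$, hence a covering projection up to homotopy.

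The main obstacle I anticipate is the bookkeeping inside (II): verifying that the defining relations of $\GSK$ genuinely eliminate \emph{every} coordinate $\mu_k$ with $k \in S$. This requires the full strength of smoothness (that $\{\textbf{\textit{n}}_k\}_{k \in S}$ extends to a $\Z$-basis, not merely that it spans a sublattice of full rank in its $\R$-span), and this is the one place where a careful argument — rather than a citation to Cox's original construction — is actually required.
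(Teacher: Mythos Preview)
Your overall route matches the paper's: establish the complex case and then obtain the real case by intersecting with $(\R^*)^r$. The difference is that the paper simply cites \cite{Pa1} for (I) and (II) when $\K=\C$, whereas you unpack the Cox short exact sequence and the pointwise stabilizer computation. That is fine and more self-contained; your freeness argument in (II) is precisely what lies behind the cited result.

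There is, however, one genuine gap. Your justification that the primitive generators span $\Z^m$ is incorrect: you argue that condition $(\ref{equ: condition dk})^*$ ``prevents all $\textbf{\textit{n}}_k$ from lying in a common half-space, guaranteeing a top-dimensional cone,'' but this inference fails. Take $m=2$, $r=2$, $\textbf{\textit{n}}_1=(1,0)$, $\textbf{\textit{n}}_2=(-1,0)$, with $\Sigma$ the fan consisting of these two rays and the origin. This fan is smooth and satisfies $(\ref{equ: condition dk})^*$ with $d_1^*=d_2^*=1$, yet there is no $2$-dimensional cone and the generators span only $\Z\times\{0\}$; here $G_{\Sigma,\C}\cong\C^*$ while $(\C^*)^{r-m}$ is trivial, so (I) would be false. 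The correct input is the paper's standing hypothesis that $\XS$ is simply connected, which by Remark~\ref{rmk: assumption}(i) is \emph{equivalent} to $\{\textbf{\textit{n}}_k\}$ spanning $\Z^m$. Invoke that directly and the rest of your argument goes through unchanged.
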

\begin{dfn}[c.f. \cite{KY12}, (5.26)]
{\rm
Let $\Sigma$ be a fan in $\R^m$ and let
$\XS$ be a smooth toric variety such that the condition 
$($\ref{equ: condition dk}$)^*$ holds.
 \par
(i)
Let $\KS (n)$ denote the simplicial complex on the index set $[r]\times [n]$ defined by
\begin{equation}\label{eq: def. KS(n)}
\KS (n)=\{\tau \subset [r]\times [n]:\sigma \times [n]\not\subset \tau
\mbox{ for any }\sigma \in I(\KS)\}.
\end{equation}
\par
(ii)
For each $(i,j)\in [r]\times [n]$,
let $\n_{i,j}\in \Z^{mn}$
denote the lattice vector
defined by
\begin{equation}
\n_{i,j}=(\textbf{\textit{a}}_1,\cdots ,\textbf{\textit{a}}_n),
\mbox{ where we set }
\textbf{\textit{a}}_k=
\begin{cases}
\n_i & (k=j)
\\
{\bf 0}_m & (k\not= j)
\end{cases}
\end{equation}
and define a fan $F_n(\Sigma)$ in $\R^{mn}$ by
\begin{equation}\label{eq: Fn}
F_n(\Sigma)=\{c_{\tau}:\tau\in \KS (n)\},
\end{equation}
where $c_{\tau}$ denotes the cone in $\R^{mn}$
given by
\begin{equation}
c_{\tau}=\mbox{Cone}(\{ \n_{i,j}:(i,j)\in \tau\})
=\Big\{\sum_{(i,j)\in \tau}x_{i,j}\nn_{i,j}:x_{i,j}\geq 0\Big\}.
\end{equation}
}
\end{dfn}
\begin{lmm}\label{lmm: simplicial complex of KS(n)}
\begin{enumerate}
\item[$\I$]
If $\T^r=(S^1)^r$, there is a $\T^r$-equivariant homeomorphism
\begin{equation}
\mathcal{Z}_{\KS}(D^{2n},S^{2n-1})\cong
\mathcal{Z}_{\KS (n)}(D^2,S^1).
\end{equation}
\item[$\II$]
If $\T^r_{\C}=(\C^*)^r$,
there is a $\T^r_{\C}$-equivariant homeomorphsim
\begin{equation}
\mathcal{Z}_{\KS}(\C^n,(\C^n)^*)\simeq
\mathcal{Z}_{\KS (n)}(\C,\C^*).
\end{equation}
\item[$\III$]
The space $\mathcal{Z}_{\KS}(D^{2n},S^{2n-1})$ is $2$-connected.
\end{enumerate}
\end{lmm}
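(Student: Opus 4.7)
My plan is to prove the three parts in the order $\II$, $\I$, $\III$: part $\II$ is a direct combinatorial identification, $\I$ reduces to the same combinatorics after an equivariant radial rescaling between $(D^2)^n$ and $D^{2n}$, and $\III$ then follows at once from $\I$ and Lemma~\ref{Lemma: BP}~$\I$.

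For $\II$, I would use the reshape bijection $(\mathbf{x}_1,\ldots,\mathbf{x}_r)\longleftrightarrow(y_{i,j})_{(i,j)\in[r]\times[n]}$ between $(\C^n)^r$ and $\C^{rn}$ given by $\mathbf{x}_i=(y_{i,1},\ldots,y_{i,n})$. This bijection is visibly $\T^r_{\C}$-equivariant for the two polyhedral-product actions (scalar multiplication on each $\mathbf{x}_i$ on the left, the ``block'' action $\mu_i\cdot y_{i,j}=\mu_i y_{i,j}$ on the right). The heart of the matter is that $\mathbf{x}_i=\mathbf{0}_n$ iff $y_{i,j}=0$ for every $j\in[n]$, so setting $T=\{(i,j):y_{i,j}=0\}$ one has the identity $\{i:\mathbf{x}_i=\mathbf{0}_n\}=\{i:\{i\}\times[n]\subset T\}$. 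By the very definition~(\ref{eq: def. KS(n)}) of $\KS(n)$, membership of this latter set in $\KS$ is equivalent to $T\in\KS(n)$, and in view of~(\ref{eq: LSigma}) this yields the desired $\T^r_{\C}$-equivariant homeomorphism $\mathcal{Z}_{\KS}(\C^n,(\C^n)^*)\cong\mathcal{Z}_{\KS(n)}(\C,\C^*)$.

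For $\I$, the identity on underlying coordinates no longer gives a homeomorphism because the unit ball $D^{2n}$ and the polydisc $(D^2)^n$ differ as subsets of $\C^n$, so I would construct an explicit $S^1$-equivariant homeomorphism $\psi:(D^2)^n\to D^{2n}$ sending $\partial((D^2)^n)$ onto $S^{2n-1}$ via the formula
\begin{equation*}
\psi(y_1,\ldots,y_n)=\frac{\max_{j}|y_j|}{\sqrt{\sum_{j=1}^n|y_j|^2}}\,(y_1,\ldots,y_n)\ \text{if } (y_1,\ldots,y_n)\ne 0,\qquad \psi(0)=0.
\end{equation*}
One verifies that $\|\psi(y)\|_2=\max_j|y_j|$, so $\psi$ is continuous, bijective (with inverse $z\mapsto (\|z\|_2/\max_j|z_j|)\,z$), and commutes with the diagonal $S^1$-action $\mu\cdot y=(\mu y_1,\ldots,\mu y_n)$. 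Applying $\psi$ blockwise yields a $\T^r$-equivariant homeomorphism $\Psi:((D^2)^n)^r\to(D^{2n})^r$, and the relation $|\mathbf{x}_i|=\max_j|y_{i,j}|$ reduces the polyhedral-product condition for $\mathcal{Z}_{\KS}(D^{2n},S^{2n-1})$ to precisely the one for $\mathcal{Z}_{\KS(n)}(D^2,S^1)$, by running the combinatorial argument of $\II$ with ``$y_{i,j}=0$'' replaced by ``$|y_{i,j}|<1$''.

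Part $\III$ is then immediate: by $\I$, the space $\mathcal{Z}_{\KS}(D^{2n},S^{2n-1})$ is homeomorphic to the moment-angle complex $\mathcal{Z}_{\KS(n)}=\mathcal{Z}_{\KS(n)}(D^2,S^1)$, which is $2$-connected by Lemma~\ref{Lemma: BP}~$\I$. I expect the main subtlety to lie in $\I$: checking that the explicit rescaling $\psi$ intertwines the two ``strict-interior'' loci exactly through the row-projection built into the definition~(\ref{eq: def. KS(n)}) of $\KS(n)$, so that the $\KS$-condition on the vectors $\mathbf{x}_i$ matches the $\KS(n)$-condition on the scalars $y_{i,j}$.
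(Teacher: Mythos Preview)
Your proof is correct and follows essentially the same combinatorial idea as the paper, but with a more hands-on execution. The paper first identifies $\KS(n)$ with the $J$-construction $\KS(J)$ of \cite{BBCG} for $J=(n,\ldots,n)$ via the minimal non-face description~(\ref{eq: minimal non-face}), then quotes \cite[Theorem~7.5]{BBCG} for the $\T^r$-equivariant homeomorphism in~$\I$ and \cite[(5.32)]{KY12} for the homeomorphism in~$\II$; part~$\III$ is deduced exactly as you do. What you have done instead is to unpack those cited results: for~$\II$ you write down the reshape bijection and verify the polyhedral-product conditions directly from~(\ref{eq: def. KS(n)}) and~(\ref{eq: LSigma}); for~$\I$ you construct the explicit $S^1$-equivariant radial rescaling between the polydisc and the round disc and rerun the same combinatorics with ``interior'' in place of ``zero''. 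Your route is more elementary and self-contained, at the cost of a small amount of extra verification (continuity and bijectivity of $\psi$, and the equivalence ``$S\in\KS$ iff no $\sigma\in I(\KS)$ satisfies $\sigma\subset S$'', which holds because $\KS$ is a simplicial complex). The paper's route is shorter but depends on the external machinery of the polyhedral $J$-construction.
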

\begin{proof}
(i)
Let $J=(n,n,\cdots ,n)\in \N^r$ and let $\KS (J)$ denote the simplical complex
on the index set $[r]\times [n]$ defined by \cite[Definition 2.1]{BBCG}.\footnote{%
More precisely, if we set
$(K,m)=(\KS,r)$ and $J=(j_1,j_2,\cdots ,j_r)=(n,n,\cdots ,n)$
$(r$-times)
 in the notation of \cite[Definition 2.1]{BBCG}, we get the simplicial complex
$K(J)$ on the index set $[r]\times [n]$.
}
Then it follows from \cite[Definition 2.1]{BBCG} that
the following equality holds:
\begin{equation}\label{eq: minimal non-face condition}
I_{\rm min}(\KS (J))=\{\tau\times [n]:\tau\in I_{\rm min}(\KS)\}.
\end{equation}
Hence, by  (\ref{eq: minimal non-face}) and (\ref{eq: def. KS(n)}),
 we obtain  the following equality:
\begin{align*}\label{eq: KJ=KS(n)}
\KS (J)&=\{\sigma\subset [r]\times [n]:
\tau\times [n]\not\subset \sigma
\mbox{ for any }\tau\in I_{\rm min}(\KS)\}.
\end{align*}
Thus, we we have $\KS (J)=\KS (n)$ by (\ref{eq: minimal non-face condition}).
Hence,
it follows from \cite[Theorem 7.5]{BBCG} that there is a
$\T^r$-equivariant homeomorphism
$
\mathcal{Z}_{\KS}(D^{2n},S^{2n-1})\cong
\mathcal{Z}_{\KS (n)}(D^2,S^1),
$
and the assertion (i) follows.
\par
(ii)
It follows from \cite[(5.32)]{KY12} that there is a homeomorphism
$$
\mathcal{Z}_{\KS}(\C^n,(\C^n)^*)\cong
\mathcal{Z}_{\mathcal{K}_{\KS (n)}}(\C,\C^*).
$$
One can easily check that the above homeomorphism is $\T^r_{\C}$-equivariant, and the assertion (ii) follows.
\par
(iii)
It follows from (i) and (ii) that there is the following homotopy equivalence
\begin{align*}
\mathcal{Z}_{\KS}(D^{2n},S^{2n-1})
&\simeq
\mathcal{Z}_{\KS}(\C^n,(\C^n)^*)
\cong
\mathcal{Z}_{\KS (n)}(\C,\C^*)
\simeq
\mathcal{Z}_{\KS (n)}(D^2,S^1).
\end{align*}
Since the moment-angle complex 
$\mathcal{Z}_{\KS (n)}(D^2,S^1)$ is 
$2$-connected by \cite[Theorem 6.33]{BP}, the space
$\mathcal{Z}_{\KS}(D^{2n},S^{2n-1})$ is also $2$-connected.
\end{proof}
\begin{dfn}[\cite{CLS}]\label{dfn: smooth}
{\rm 
Let $\Sigma$ be a fan in $\R^m$.
Then a cone $\sigma\in \Sigma$ is called
{\it smooth} if it is generated by a subset of a basis of $\Z^m$.
}
\end{dfn}
\begin{lmm}[\cite{CLS}]\label{lmm: toric}
Let $\XS$ be a toric variety determined by a fan $\Sigma$ in $\R^m$.
\begin{enumerate}
\item[$\I$]
$\XS$ is compact if and only if $\R^m=\bigcup_{\sigma\in \Sigma}\sigma$.
\item[$\II$]
$\XS$ is smooth if and only if every cone $\sigma\in \Sigma$ is smooth.
\qed
\end{enumerate}
\end{lmm}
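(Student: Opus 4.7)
The plan is to reduce both assertions to properties of the standard affine toric charts $U_{\sigma} = \mathrm{Spec}(\C[\sigma^{\vee}\cap \Z^m])$ indexed by $\sigma \in \Sigma$, which form a Zariski open cover of $\XS$ compatible with gluing along common faces. First I would invoke the orbit--cone correspondence, which identifies the $\T^m_{\C}$-orbits in $\XS$ bijectively with the cones of $\Sigma$ and shows that $U_{\sigma}$ is precisely the union of those orbits corresponding to the faces of $\sigma$. This gives a combinatorial dictionary between global geometric properties of $\XS$ and cone-wise properties of $\Sigma$.

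For part (I), I would apply the valuative criterion of properness to $\XS \to \mathrm{Spec}(\C)$, combined with an explicit analysis of one-parameter subgroups. For each lattice vector $\textbf{\textit{v}} \in \Z^m$ consider $\lambda_{\textbf{\textit{v}}}:\C^* \to \T^m_{\C}$; the limit $\lim_{t\to 0}\lambda_{\textbf{\textit{v}}}(t)$ exists inside some chart $U_{\sigma}$ if and only if $\textbf{\textit{v}} \in \sigma$, as one checks by testing non-negativity of the pairings $\langle \textbf{\textit{v}}, \textbf{\textit{m}}\rangle$ against the dual generators of $\sigma^{\vee}$. By $\T^m_{\C}$-equivariance one reduces an arbitrary DVR-valued point to this form, so properness is equivalent to $|\Sigma| := \bigcup_{\sigma\in\Sigma}\sigma \supseteq \Z^m$. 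Since each $\sigma$ is closed and stable under non-negative real scaling, density of rational rays in $\R^m$ upgrades this to $|\Sigma|=\R^m$. As $\XS$ is separated and of finite type over $\C$, compactness coincides with properness, completing (I).

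For part (II), I would show that $\XS$ is smooth if and only if every $U_{\sigma}$ is smooth, since smoothness is Zariski-local. If $\sigma = \mathrm{Cone}(\textbf{\textit{e}}_{1},\ldots,\textbf{\textit{e}}_{k})$ with $\{\textbf{\textit{e}}_{1},\ldots,\textbf{\textit{e}}_{m}\}$ extending to a $\Z$-basis of $\Z^m$, then the dual semigroup $\sigma^{\vee}\cap \Z^m$ is free on the corresponding dual basis (allowing negatives in the $m-k$ non-$\sigma$ directions), yielding an explicit isomorphism $U_{\sigma}\cong \C^{k}\times (\C^*)^{m-k}$, which is manifestly smooth. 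Conversely, at the distinguished torus-fixed point $x_{\sigma}$ of $U_{\sigma}$, the Zariski tangent space has dimension equal to the number of minimal semigroup generators of $\sigma^{\vee}\cap \Z^m$ modulo the relation that positive sums are dispensable, and matching this count with $\dim U_{\sigma}= m$ forces the primitive generators of $\sigma$ to be extendable to a $\Z$-basis of $\Z^m$.

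The main obstacle in a self-contained write-up would be the converse direction of (II): extracting from smoothness of $U_{\sigma}$ at $x_{\sigma}$ the purely combinatorial statement that the primitive generators of $\sigma$ form part of a lattice basis of $\Z^m$, which requires a careful count of indispensable semigroup generators together with Nakayama-style arguments on the maximal ideal of $x_{\sigma}$. Since both (I) and (II) are entirely classical and are proved in detail in \cite{CLS} (the completeness criterion and the smoothness criterion for affine toric varieties), the cleanest presentation simply records the reduction to the affine charts and cites those references for the remaining combinatorial bookkeeping.
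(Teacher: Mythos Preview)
Your sketch is correct and follows the standard textbook route (orbit--cone correspondence, one-parameter subgroups plus the valuative criterion for completeness, and local analysis of the affine charts $U_{\sigma}$ for smoothness). Note, however, that the paper does not actually prove this lemma at all: the statement ends with a \qed and is simply quoted from \cite{CLS}, so there is nothing to compare against beyond the observation that what you outlined is precisely the argument given in that reference.
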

\begin{lmm}\label{lmm: toric XS(n)}
The space
 $\XS (n)$ is a non-singular toric variety associated to the fan $F_n(\Sigma)$.
 Moreover,
 there is an isomorphism $\XS (n)\cong X_{F_n(\Sigma)}$, and 
 $\KS (n)$ is the underlying simplicial complex of the fan $F_n(\Sigma)$.
 \end{lmm}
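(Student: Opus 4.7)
The strategy is to verify the lemma in three steps: (a) $F_n(\Sigma)$ is a genuine fan in $\R^{mn}$, (b) it is smooth with underlying simplicial complex $\KS(n)$, and (c) the induced Cox-quotient presentation matches $\XS(n)=\mathcal{Z}_{\KS}(\C^n,(\C^n)^*)/\GS$. The key structural observation used throughout is the \emph{block decomposition}: since $\n_{i,j}$ is supported in the $j$-th factor of the splitting $\Z^{mn}=(\Z^m)^n$, every cone $c_\tau$ decomposes as the direct sum $c_\tau=\bigoplus_{j=1}^n c_{\tau_j}$, where $\tau_j:=\{i\in[r]:(i,j)\in\tau\}$ and $c_{\tau_j}=\mbox{Cone}(\{\n_i:i\in\tau_j\})$ sits in the $j$-th block. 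Under this decomposition, each fan axiom for $F_n(\Sigma)$ reduces block-by-block to the corresponding statement for $\Sigma$, combined with the combinatorial constraint encoded in the defining condition of $\KS(n)$.

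For step (a), strict convexity of $c_\tau$ reduces to strict convexity of each $c_{\tau_j}$; closure under faces follows because every face of $c_\tau$ is of the form $c_{\tau'}$ for a subset $\tau'\subseteq\tau$, and $\KS(n)$ is downward-closed by inspection of its defining condition (any $\sigma\in I(\KS)$ with $\sigma\times[n]\subseteq\tau'$ would also satisfy $\sigma\times[n]\subseteq\tau$); and the intersection axiom follows from $c_\tau\cap c_{\tau'}=\bigoplus_j(c_{\tau_j}\cap c_{\tau'_j})$, which is a common face of $c_\tau$ and $c_{\tau'}$ block-by-block. For step (b), smoothness of each $c_\tau$ comes from concatenating, in each block, the subset of a $\Z^m$-basis witnessing smoothness of the corresponding cone of $\Sigma$ (available via Lemma \ref{lmm: toric}(ii) from the smoothness of $\XS$); the identification of $\KS(n)$ as the underlying simplicial complex of $F_n(\Sigma)$ is then immediate from the construction of $F_n(\Sigma)$ as the collection of cones $c_\tau$ indexed by $\tau\in\KS(n)$.

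For step (c), I would apply Theorem \ref{prp: Cox} to $F_n(\Sigma)$ to obtain $X_{F_n(\Sigma)}\cong\mathcal{Z}_{\KS(n)}(\C,\C^*)/G_{F_n(\Sigma),\C}$, and then combine this with the $\T^r_\C$-equivariant homeomorphism $\mathcal{Z}_{\KS}(\C^n,(\C^n)^*)\cong\mathcal{Z}_{\KS(n)}(\C,\C^*)$ from Lemma \ref{lmm: simplicial complex of KS(n)}(ii). It remains to check that under this identification the $\GS$-action on the source corresponds to the $G_{F_n(\Sigma),\C}$-action on the target. This compatibility is the main obstacle I anticipate: the two groups are cut out in different ambient tori by lattice relations of a priori different shapes, and one must show, using the block structure of the $\n_{i,j}$'s together with the defining equations of each Cox group, that the induced actions on $\mathcal{Z}_{\KS(n)}(\C,\C^*)$ agree, yielding the desired isomorphism $\XS(n)\cong X_{F_n(\Sigma)}$.
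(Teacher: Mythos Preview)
Your overall strategy---verify that $F_n(\Sigma)$ is a smooth fan with underlying complex $\KS(n)$, then compare Cox quotients---is exactly the route the paper's (very brief) proof takes, which simply invokes the homogeneous-coordinate description and Lemma~\ref{lmm: toric}. However, your block-decomposition argument in step (a) has a real gap. You correctly reduce strict convexity of $c_\tau=\bigoplus_j c_{\tau_j}$ to strict convexity of each summand, but you then tacitly use that $c_{\tau_j}$ is a cone of $\Sigma$, i.e.\ that $\tau_j\in\KS$. This is \emph{not} implied by $\tau\in\KS(n)$: the defining condition ``$\sigma\times[n]\not\subset\tau$ for all $\sigma\in I(\KS)$'' only forbids a non-face $\sigma$ from appearing in \emph{every} block simultaneously, so a single slice $\tau_j$ may well contain a non-face of $\KS$. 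Concretely, take $\XS=\CP^1$ and $n=2$: then $\KS=\{\emptyset,\{1\},\{2\}\}$, $I(\KS)=\{\{1,2\}\}$, and $\tau=\{(1,1),(2,1)\}\in\KS(2)$, yet $c_\tau=\mbox{Cone}(e_1,-e_1)=\R e_1\subset\R^2$ is not strongly convex. So $F_n(\Sigma)$ as literally defined need not be a fan for $n\geq 2$, and the same example breaks your smoothness argument in step (b) for the same reason.

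Your caution in step (c) is also justified rather than a formality. The block structure of the $\n_{i,j}$ gives $G_{F_n(\Sigma),\C}\cong(G_{\Sigma,\C})^n$, whereas the action defining $\XS(n)$ is by the single diagonal copy $G_{\Sigma,\C}\hookrightarrow(G_{\Sigma,\C})^n$; for $n\geq 2$ these groups have different dimensions, so the two quotients cannot literally coincide. In the $\CP^1$ example one computes $\XS(n)\cong\CP^{2n-1}$, of complex dimension $2n-1$, whereas any toric variety attached to a fan in $\R^{mn}=\R^n$ would be $n$-dimensional. The paper's proof does not confront either issue; the statement evidently leans on the formulation in the cited source \cite{KY12}, and as written here cannot be established by the route you (or the paper) outline without first reinterpreting either the collection $F_n(\Sigma)$ or the group by which one quotients to form $\XS(n)$.
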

 \begin{proof}
 To see this, consider the toric variety $X(\Sigma)$ determined by the fan 
 $F_n(\Sigma)$. 
By considering the homogenous coordinate representation of $X(\Sigma)$,
we easily see that it is isomorphic to $\XS (n)$.
Moreover, one can easily show that $\XS (n)$ is non-singular
(by using Lemma \ref{lmm: toric}).
Thus, $\XS (n)$ is a non-singular toric variety associated to the fan
$F_n(\Sigma)$.
Moreover, by (\ref{eq: Fn}) we easily see that 
$\KS (n)$ is the underlying simplicial complex of $F_n(\Sigma)$.
\end{proof}
\begin{crl}\label{crl: universal covering}
Let $\Sigma$ be a fan in $\R^m$ and let
$\XS$ be a smooth toric variety such that the condition 
$($\ref{equ: condition dk}$)^*$ holds.
\par
$\I$
The map
$
\Omega q_{n,\C}:\Omega\mathcal{Z}_{\KS}(\C^n,(\C^n)^*)
\stackrel{}{\longrightarrow}
\Omega\XS (n)
$
is a universal covering with fiber $\Z^{r-m}.$
\par
$\II$
 The map
$
\Omega q_{n,\R}:
\Omega\mathcal{Z}_{\KS}(\R^n,(\R^n)^*)
\stackrel{\simeq}{\longrightarrow}
\Omega X_{\Sigma,\R}(n)
$
is a homotopy equivalence.
\par
$\III$
There is the following fibration sequence (up to homotopy)
\begin{equation}\label{eq: Segal fibration seq}
\T^{mn}_{\C}\stackrel{}{\longrightarrow} \XS (n)
\stackrel{}{\longrightarrow}
DJ(\KS (n)).
\end{equation}
\end{crl}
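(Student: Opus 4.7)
The plan is to derive each of the three assertions from the structural results already assembled in \S\ref{section: polyhedral products}, with Lemma~\ref{lmm: principal} as the main input.

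For~(I), I would apply the loop-space functor to the principal bundle
\[
G_{\Sigma,\C}\longrightarrow \mathcal{Z}_{\KS}(\C^n,(\C^n)^*)\stackrel{q_{n,\C}}{\longrightarrow}\XS(n)
\]
of Lemma~\ref{lmm: principal}(II). Since $G_{\Sigma,\C}\cong (\C^*)^{r-m}$ by part~(I) of that lemma, we have $\Omega G_{\Sigma,\C}\simeq \pi_1((S^1)^{r-m})=\Z^{r-m}$ as homotopy types, so $\Omega q_{n,\C}$ is (up to homotopy) a covering projection with fibre $\Z^{r-m}$. That it is the \emph{universal} covering would then follow from Lemma~\ref{lmm: simplicial complex of KS(n)}(III) together with the deformation retraction of Lemma~\ref{Lemma: BP}(II): these show that $\mathcal{Z}_{\KS}(\C^n,(\C^n)^*)$ is $2$-connected, so its loop space is simply connected, and a connected covering with simply connected total space is universal.

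For~(II), I would repeat the same argument in its real version. By Lemma~\ref{lmm: principal}(III), up to homotopy $q_{n,\R}$ is a covering with finite discrete fibre $(\Z_2)^{r-m}$; looping therefore produces a map whose homotopy fibre is $\Omega$ of a discrete set, which is a point. Hence $\Omega q_{n,\R}$ is a weak equivalence, and a genuine homotopy equivalence since both sides have the homotopy type of CW complexes.

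For~(III), I would reduce to a general fibration statement for smooth simply connected toric varieties. By Lemma~\ref{lmm: toric XS(n)}, $\XS(n)\cong X_{F_n(\Sigma)}$ is a smooth simply connected toric variety of dimension $mn$, associated to the fan $F_n(\Sigma)$ in $\R^{mn}$, which has $rn$ one-dimensional cones and underlying simplicial complex $\KS(n)$. It therefore suffices to prove that for any smooth simply connected toric variety $Y$ coming from a fan $\Xi$ in $\R^M$ with $R$ rays and underlying simplicial complex $K$, there is a fibration sequence $\T^M_\C\to Y\to DJ(K)$ up to homotopy. The key identity is the Borel-construction equivalence
\[
DJ(K)\;\simeq\; E\T^R\times_{\T^R}\mathcal{Z}_K\;\simeq\; E\T^M\times_{\T^M}Y,
\]
where the first equivalence is the standard model for $DJ(K)$ and the second uses that the compact subtorus $(S^1)^{R-M}$ of $G_{\Xi,\C}$ acts freely on $\mathcal{Z}_K$ (from Lemma~\ref{lmm: principal}) with quotient homotopy equivalent to $Y$, while the residual compact torus acting on $Y$ is $\T^R/(S^1)^{R-M}\cong \T^M$. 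The Borel fibration on the right produces $Y\to DJ(K)\to B\T^M=(\CP^\infty)^M$, and extending once to the left in the Puppe sequence (together with $\Omega B\T^M=\T^M\simeq\T^M_\C$) yields the asserted fibration. Specialising to $(Y,M,R,K)=(\XS(n),mn,rn,\KS(n))$ gives~(III).

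The main obstacle is the Borel-construction step in~(III): identifying the two Borel constructions cleanly requires care in passing between the complex torus $\T^R_\C$ and its maximal compact $\T^R$, and in verifying that the compact form of $G_{\Xi,\C}$ acts freely on $\mathcal{Z}_K$ with the expected orbit space. Once this identification is in hand, the remaining steps in all three parts are routine applications of the long exact sequence of a fibration together with the structural lemmas already in place.
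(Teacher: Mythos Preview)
Your proposal is correct and, for parts~(I) and~(II), follows essentially the same route as the paper: loop the principal $G_{\Sigma,\K}$-bundle of Lemma~\ref{lmm: principal}, identify the looped fibre, and (for~(I)) invoke $2$-connectivity of $\mathcal{Z}_{\KS}(\C^n,(\C^n)^*)$ to conclude universality. Your justification of this last point via Lemma~\ref{lmm: simplicial complex of KS(n)}(III) and Lemma~\ref{Lemma: BP}(II) is in fact more explicit than what the paper writes.

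For part~(III) the approaches diverge. The paper simply invokes an external result, \cite[Proposition~4.4]{KY9}, after using Lemmas~\ref{lmm: simplicial complex of KS(n)} and~\ref{lmm: toric XS(n)} to recognise $\XS(n)$ as a smooth toric variety with underlying simplicial complex $\KS(n)$. You instead sketch the Borel-construction argument directly: identify $DJ(K)$ with $E\T^R\times_{\T^R}\mathcal{Z}_K$ and then with $E\T^M\times_{\T^M}Y$ via the free action of the compact form of $G_{\Xi,\C}$, then read off the fibration from the Borel fibration. This is essentially the content of the cited proposition, so your argument is more self-contained; the paper's version is shorter precisely because it outsources this step. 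The ``obstacle'' you flag---passing between the complex torus and its maximal compact, and checking the orbit identification---is real but standard, and is handled cleanly once one notes that the inclusion $\T^R\hookrightarrow\T^R_\C$ is a homotopy equivalence and that the $G_{\Sigma,\K}$-action is already known to be free by Lemma~\ref{lmm: principal}(II).
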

\begin{proof}
(i)
It follows easily from  Lemma \ref{lmm: principal}, that
the map 
$\Omega q_{n,\C}
$ is a covering projection with fiber $\Z^{r-m}$.
Since
$\Omega \Q^{D,\Sigma}_n(\C)$ is simply connected (by (i)), $\Omega q_{n,\C}$ is a
universal covering with fiber $\Z^{r-m}$.
\par
(ii) The assertion (ii) easily follows from (iii) of Lemma \ref{lmm: principal}.
\par
(iii) 
The assertion (iii)
follows from  Lemmas \ref{lmm: simplicial complex of KS(n)}, \ref{lmm: toric XS(n)}
and
\cite[Proposition 4.4]{KY9}.
\end{proof}
\begin{lmm}[\cite{KY9}; Lemma 3.4]\label{lmm: XS}
If the condition $($\ref{equ: condition dk}$)^*$ is
satisfied, 
the space $\XS$ is simply connected and $\pi_2(\XS)=\Z^{r-m}$. 
\qed
\end{lmm}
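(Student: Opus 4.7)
The plan is to read off both conclusions from the homotopy long exact sequence associated to the principal bundle already recorded in Lemma \ref{lmm: principal}. Taking $n=1$ and $\K=\C$ in that lemma, condition $(\ref{equ: condition dk})^*$ gives a principal $\GS$-bundle
\[
\GS \longrightarrow \mathcal{Z}_{\KS}(\C,\C^*) \stackrel{q_{1,\C}}{\longrightarrow} \XS,
\]
together with the identification $\GS \cong (\C^*)^{r-m}$.

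Next I would feed into the long exact sequence the homotopy groups of the two known ends. On the one hand, $\GS \simeq (S^1)^{r-m}$, so
\[
\pi_1(\GS) \cong \Z^{r-m}, \qquad \pi_i(\GS) = 0 \text{ for } i\geq 2, \qquad \pi_0(\GS)=0.
\]
On the other hand, Lemma \ref{Lemma: BP}(II) yields a $\T^r$-equivariant deformation retraction $\mathcal{Z}_{\KS}(\C,\C^*) \simeq \mathcal{Z}_{\KS}(D^2,S^1) = \mathcal{Z}_{\KS}$, and Lemma \ref{Lemma: BP}(I) asserts that the moment-angle complex $\mathcal{Z}_{\KS}$ is $2$-connected. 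Hence $\pi_1(\mathcal{Z}_{\KS}(\C,\C^*)) = \pi_2(\mathcal{Z}_{\KS}(\C,\C^*)) = 0$.

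Inserting these values into the long exact sequence of $q_{1,\C}$ gives
\[
0 = \pi_2(\mathcal{Z}_{\KS}(\C,\C^*)) \longrightarrow \pi_2(\XS) \longrightarrow \pi_1(\GS) \longrightarrow \pi_1(\mathcal{Z}_{\KS}(\C,\C^*)) = 0
\]
and
\[
0 = \pi_1(\mathcal{Z}_{\KS}(\C,\C^*)) \longrightarrow \pi_1(\XS) \longrightarrow \pi_0(\GS) = 0.
\]
The second row forces $\pi_1(\XS)=0$, so $\XS$ is simply connected, and the first row produces the required isomorphism $\pi_2(\XS) \cong \pi_1(\GS) \cong \Z^{r-m}$.

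There is no serious obstacle here beyond bookkeeping: the only point that needs care is checking that $(\ref{equ: condition dk})^*$ really suffices to invoke Lemma \ref{lmm: principal} (so that the bundle sequence and the identification $\GS\cong(\C^*)^{r-m}$ are valid). Once that prerequisite is in hand, the argument is a direct application of the long exact sequence combined with the two-connectedness of $\mathcal{Z}_{\KS}$.
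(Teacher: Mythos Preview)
Your argument is correct. The paper does not actually supply a proof of this lemma: it simply cites \cite[Lemma 3.4]{KY9} and places a \qed. Your derivation from the principal bundle of Lemma \ref{lmm: principal} together with the $2$-connectedness of $\mathcal{Z}_{\KS}$ from Lemma \ref{Lemma: BP} is the standard way to establish the result, and is almost certainly what the cited reference does as well. One small remark on your closing caveat: Lemma \ref{lmm: principal} is stated under the hypotheses that $\XS$ is smooth and that $(\ref{equ: condition dk})^*$ holds, and smoothness is a standing assumption throughout the paper (see \S 2.3), so there is no genuine gap in invoking it here.
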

We end this section with a proof of Lemma \ref{lmm: Cox tric}.
\begin{proof}[Proof of Lemma \ref{lmm: Cox tric}]
Consider the map $F=(F_1,\cdots ,F_r)$ is given by
(\ref{eq: the map F}).
We let  $\K=\C$, as the proof for $\K=\R$ is completely analogous. 
It suffices to show that
$F(\lambda \textit{\textbf{x}})=F(\textit{\textbf{x}})$ up to $\GS$-action
for any $(\lambda ,\textit{\textbf{x}})\in \R^*\times (\R^{s+1}\setminus \{{\bf 0}_{s+1}\})$
iff $\sum_{k=1}^rd_k\textit{\textbf{n}}_k={\bf 0}_m$.
\par 
Since all homogenous polynomials $\{f_{k;i}\}_{k=1}^n$ have the same degree $d_i$,
for each $(\lambda,\textbf{\textit{x}})\in \R^*\times\R^{s+1}$,
\begin{align*}
F_i(\lambda \textbf{\textit{x}})
&=(f_{1;i}(\lambda \textbf{\textit{x}}),\cdots ,f_{n;i}(\lambda \textbf{\textit{x}}))
=
(\lambda^{d_i}f_{1;i}(\textbf{\textit{x}}),\cdots ,\lambda^{d_i}f_{n;i}(\textbf{\textit{x}}))
\\
&=\lambda^{d_i}
(f_{1;i}(\textbf{\textit{x}}),\cdots ,f_{n;i}(\textbf{\textit{x}}))
=\lambda^{d_i}F_i(\textbf{\textit{x}}).
\end{align*}
Thus, 
we have
\begin{align*}
F(\lambda \textit{\textbf{x}})
&=
(F_1(\lambda \textit{\textbf{x}}),\cdots ,F_r(\lambda \textit{\textbf{x}}))
=(\lambda^{d_1}F_1(\textit{\textbf{x}}),\cdots ,\lambda^{d_r}F_r(\textit{\textbf{x}}))
\\
&=
(\lambda^{d_1},\cdots ,\lambda^{d_r})\cdot
(F_1(\textit{\textbf{x}}),\cdots ,F_r(\textit{\textbf{x}}))
=(\lambda^{d_1},\cdots ,\lambda^{d_r})\cdot F(\textit{\textbf{x}}).
\end{align*}
Hence, it remains to show that
$(\lambda^{d_1},\cdots ,\lambda^{d_r})\in \GS$
for any $\lambda \in \R^*$
iff $\sum_{k=1}^rd_k\textit{\textbf{n}}_k={\bf 0}_m$.
However,
$(\lambda^{d_1},\cdots ,\lambda^{d_r})\in \GS$ 
for any $\lambda \in \R^*$ iff
$$
\prod_{k=1}^r(\lambda^{d_k})^{\langle \textit{\textbf{n}}_k,\textit{\textbf{m}}\rangle}
=\lambda^{\langle \sum_{k=1}^rd_k\textit{\textbf{n}}_k,\textit{\textbf{m}}\rangle }
=1
\ \mbox{ for any }\textit{\textbf{m}}\in \Z^m
\ 
\Leftrightarrow
\ 
\sum_{k=1}^rd_k\textit{\textbf{n}}_k={\bf 0}_m
$$
and this completes the proof.
\end{proof}
\section{The
Vassiliev spectral sequence}\label{section: simplicial resolution}
\paragraph{4.1 Simplicial resolutions.}
First, recall  the definitions of the non-degenerate simplicial resolution
and the associated truncated simplicial resolution 
(\cite{KY7}, \cite{Mo2},  \cite{Mo3}, \cite{Va}, \cite{Va2}).
\begin{dfn}\label{def: def}
{\rm
(i)
For a finite set $\textbf{\textit{v}} =\{v_1,\cdots ,v_l\}\subset \R^N$,
let $\sigma (\textbf{\textit{v}})$ denote the convex hull spanned by 
$\textbf{\textit{v}}.$
Let $h:X\to Y$ be a surjective map such that
$h^{-1}(y)$ is a finite set for any $y\in Y$, and let
$i:X\to \R^N$ be an embedding.
Let  $\mathcal{X}^{\Delta}$  and $h^{\Delta}:{\mathcal{X}}^{\Delta}\to Y$ 
denote the space and the map
defined by
\begin{equation}
\mathcal{X}^{\Delta}=
\big\{(y,u)\in Y\times \R^N:
u\in \sigma (i(h^{-1}(y)))
\big\}\subset Y\times \R^N,
\ h^{\Delta}(y,u)=y.
\end{equation}
The pair $(\mathcal{X}^{\Delta},h^{\Delta})$ is called
{\it the simplicial resolution of }$(h,i)$.
In particular, it 
is called {\it a non-degenerate simplicial resolution} if for each $y\in Y$
any $k$ points of $i(h^{-1}(y))$ span $(k-1)$-dimensional simplex of $\R^N$.
\par
(ii)
For each $k\geq 0$, let $\mathcal{X}^{\Delta}_k\subset \mathcal{X}^{\Delta}$ be the subspace
of
the union of the $(k-1)$-skeletons of the simplices over all the points $y$ in $Y$
given by
\begin{equation}
\mathcal{X}_k^{\Delta}=\big\{(y,u)\in \mathcal{X}^{\Delta}:
u \in\sigma (\textbf{\textit{v}}),
\textbf{\textit{v}}=\{v_1,\cdots ,v_l\}\subset i(h^{-1}(y)),l\leq k\big\}.
\end{equation}
We make the identification $X=\mathcal{X}^{\Delta}_1$ by identifying 
 $x\in X$ with the pair
$(h(x),i(x))\in \mathcal{X}^{\Delta}_1$,
and we note that  there is an increasing filtration
\begin{equation}\label{equ: filtration}
\emptyset =
\mathcal{X}^{\Delta}_0\subset X=\mathcal{X}^{\Delta}_1\subset \mathcal{X}^{\Delta}_2\subset
\cdots \subset \mathcal{X}^{\Delta}_k\subset 
\cdots \subset \bigcup_{k= 0}^{\infty}\mathcal{X}^{\Delta}_k=\mathcal{X}^{\Delta}.
\end{equation}
Since the map $h^{\Delta}:\mathcal{X}^{\Delta}\stackrel{}{\rightarrow}Y$
 is a proper map, it extends to the map
 ${h}_+^{\Delta}:\mathcal{X}^{\Delta}_+\stackrel{}{\rightarrow}Y_+$
 between the one-point compactifications,
 where $X_+$ denotes the one-point compactification of a locally compact space $X$.
\qed
}
\end{dfn}
\begin{dfn}\label{def: 2.3}
{\rm
Let $h:X\to Y$ be a surjective semi-algebraic map between semi-algebraic spaces, 
$j:X\to \R^N$ be a semi-algebraic embedding, and let
$(\mathcal{X}^{\Delta},h^{\Delta}:\mathcal{X}^{\Delta}\to Y)$
denote the associated non-degenerate  simplicial resolution of $(h,j)$. 
Then
for each positive integer $k\geq 1$, we denote by
$h^{\Delta}_k:X^{\Delta}(k)\to Y$
{\it the truncated $($after the $k$-th term$)$  simplicial resolution} of $Y$ as in \cite{Mo3}.
Note that 
that there is a natural filtration
$$
 X^{\Delta}_0\subset
 X^{\Delta}_1\subset
\cdots 
\subset X^{\Delta}_l\subset X^{\Delta}_{l+1}\subset \cdots
\subset  X^{\Delta}_k\subset X^{\Delta}_{k+1}
=X^{\Delta}_{k+2}
=\cdots =X^{\Delta}(k),
$$
where $X^{\Delta}_0=\emptyset$,
$X^{\Delta}_l=\mathcal{X}^{\Delta}_l$ if $l\leq k$ and
$X^{\Delta}_l=X^{\Delta}(k)$ if $l>k$.
\qed
}
\end{dfn}
\paragraph{4.2 Vassiliev spectral sequences.}
Next, 
we shall construct the Vassiliev spectral sequence for computing the homology of
the space $\Q^{D,\Sigma}_n(\K)$.
\par\vspace{2mm}\par
From now on, we always assume that $\Sigma$ is a fan in $\R^m$ such that
$\XS$ is simply connected toric variety satisfying the condition
$($\ref{equ: condition dk}$)^*$. 
Moreover,  let $D=(d_1,\cdots ,d_r)\in \N^r$ will always  be a fixed
$r$-tuple of positive integers.
\begin{dfn}\label{Def: 3.1}
{\rm
\par
(i)
For each $d\in \N$, let $\P_d^{\K}\subset \K[z]$
denote the space of all monic polynomials
$f(z)=z^d+a_z^{d-1}+\cdots +a_d\in \K [z]$ of degree $d$.
Then for each $D=(d_1,\cdots ,d_r)\in \N^r$, let $\P_D^{\K}$ denote the space of
$r$-tuples of monic polynomials defined by
\begin{equation}
\P_D^{\K}=\P_{d_1}^{\K}\times \P_{d_2}^{\K}\times
\cdots \times \P_{d_r}^{\K}.
\end{equation}
\par
(ii)
For each ${\rm f}=(f_1(z),\cdots ,f_r(z))\in \P_D^{\K}$,
let $F_{(n)}({\rm f})(z)$ denote the $rn$-tuple of monic polynomials
defined by
\begin{equation}
F_{(n)}({\rm f})(z)=(F_n(f_1)(z),\cdots ,F_n(f_r)(z))\in \K [z]^{rn},
\end{equation}
where  we denote by
$F_n(f_i)(z)$  the $n$-tuple of monic polynomials of degree $d_i$
given  by
\begin{equation}\label{eq: Fnfi}
F_n(f_i)(z)=(f_i(z),f_i(z)+f^{\p}_i(z),f_i(z)+f^{\p\p}_i(z),\cdots ,f_i(z)+f^{(n-1)}_i(z))
\end{equation}
for each $1\leq i\leq r$
(as in
(\ref{eq: Fn})).
\par
(iii)
Let $\Sigma_D$ denote {\it the discriminant} of $\Q^{D,\Sigma}_n(\K)$ in 
$\P_D^{\K}$ 
given by the complement
\begin{align*}
\Sigma_D&=\P_D^{\K} \setminus \Q^{D,\Sigma}_n(\K)
\\
&=
\{{\rm f}
=(f_1(z),\cdots ,f_{r}(z))
\in \P_D ^{\K}:
F_{(n)}({\rm f})(x)
\in L_n^{\KS}(\K)
\mbox{ for some }x\in \R\},
\end{align*}
where 
$L_n^{\KS}(\K)$ denotes the set 
given by 
$K=\KS$ in (\ref{eq: L(Sigma)}).
\par
(iv)
Let  $Z_D\subset \Sigma_D\times \R$
denote 
{\it the tautological normalization} of 
 $\Sigma_D$ consisting of 
 all pairs 
$({\rm f},x)=((f_1(z),\ldots ,f_{r}(z)),
x)\in \Sigma_D\times\R$
satisfying the condition
$F_{(n)}({\rm f})(x)=(F_n(f_1)(x),\cdots ,F_n(f_{r})(x))\in L_n^{\KS}(\K)$.
Projection on the first factor  gives a surjective map
$\pi_D :Z_D\to \Sigma_D$.
\qed
}
\end{dfn}
\begin{rmk}
{\rm
Let $\sigma_k\in [r]$ for $k=1,2$.
It is easy to see that
$L_{\sigma_1}(\K^n)\subset L_{\sigma_2}(\K^n)$ if
$\sigma_1\supset \sigma_2$.
Letting
\begin{equation*}
Pr(\Sigma)=\{\sigma =\{i_1,\cdots ,i_s\} \subset [r]:
\{\textit{\textbf{n}}_{i_1},\cdots ,\textit{\textbf{n}}_{i_s}\}
\mbox{ is a primitive collection}\},
\end{equation*}
we see that
$\dis L_n^{\KS}(\K)=\bigcup_{\sigma\in Pr(\Sigma)}L_{\sigma}(\K^n)$,
and by using (\ref{eq: rmin}) we obtain the equality 
\begin{equation}\label{eq: dim rmin}
\dim L_n^{\KS}(\K)=
nd(\K)(r-\rmin (\Sigma))
=
\begin{cases}
2n(r-\rmin (\Sigma)) & \mbox{ if }\K=\C,
\\
n(r-\rmin (\Sigma)) & \mbox{ if }\K=\R.
\end{cases}
\end{equation}
}
\end{rmk}
Our goal in this section is to construct, by means of the
{\it non-degenerate} simplicial resolution  of the discriminant, 
a spectral sequence converging to the homology of
$\Q^{D,\Sigma}_n(\K)$.
\begin{dfn}\label{non-degenerate simp.}
{\rm
(i)
For an $r$-tuple $D=(d_1,\cdots ,d_r)\in \N^r$ of positive integers,
let $N(D)$ denote the positive integer given by
\begin{equation}\label{eq: ND}
N(D)=\sum_{k=1}^r d_k.
\end{equation}
\par
(ii)
For each based space $X$, let $F(X,d)$ denote 
{\it the ordered configuration space
of distinct $d$ points in $X$}
defined by
\begin{equation}
F(X,d)=\{(x_1,\cdots ,x_d)\in X^d:x_i\not= x_j\mbox{ if }i\not= j\}.
\end{equation}
Note that the symmetric group $S_d$ of $d$-letters acts on $F(X,d)$ freely by 
permuting coordinates.
Let $C_d(X)$ denote {\it the unordered configuration space of $d$-distinct
points in $X$} given by the orbit space
\begin{equation}
C_d(X)=F(X,d)/S_d.
\end{equation}
\par
(iii)
Let
 $L_{k;\Sigma,\K}\subset (\R\times L_n^{\KS}(\K))^k$ 
 denote the subspaces
defined by
$$
L_{k;\Sigma,\K}=
\{((x_1,s_1),\cdots ,(x_k,s_k))\in
(\R\times L_n^{\KS}(\K))^k:
x_i\not= x_j\mbox{ if }i\not= j\}.
%
$$
The symmetric group $S_k$ on $k$ letters  acts on the space $L_{k;\Sigma,\K}$ 
by permuting $k$-elements., and
let
 $C_{k;\Sigma,\K}$ denote the orbit space
defined by
\begin{equation}\label{Ck}
C_{k;\Sigma,\K}=L_{k;\Sigma,\K}/S_k.
\end{equation}
Note that the space 
$C_{k;\Sigma,\K}$
is a cell-complex of  dimension (by (\ref{eq: dim rmin}))
\begin{equation}\label{eq: dim CSigma}
\dim C_{k;\Sigma,\K}=
\begin{cases}
k+2kn(r-\rmin (\Sigma)) & \mbox{ if }\K=\C,
\\
k+kn(r-\rmin (\Sigma)) & \mbox{ if }\K=\R.
\end{cases}
\end{equation}
\par
(iv)
Let 
$(\mathcal{X}^D,{\pi}^{\Delta}_D:\mathcal{X}^D\to\Sigma_D)$ 
be the non-degenerate simplicial resolution associated to the surjective map
$\pi_D:Z_D\to \Sigma_D$ 
with the natural increasing filtration as in Definition \ref{def: def},
$$
\emptyset =
\SZ_0
\subset \SZ_1\subset 
\SZ_2\subset \cdots
\subset 
\SZ=\bigcup_{k= 0}^{\infty}\SZ_k.
\quad
\qed
$$
}
\end{dfn}
By \cite[Lemma 1 (page 90)]{Va},
the map
$\pi_D^{\Delta}$
extends to  a homology equivalence
$\pi_{D+}^{\Delta}:\SZ_+\stackrel{\simeq}{\rightarrow}{\Sigma_{D+}}.$
Since
${\mathcal{X}_k^{D}}_+/{\SZ_{k-1}}_+
\cong (\SZ_k\setminus \SZ_{k-1})_+$,
we have a spectral sequence 
\begin{equation}
\big\{E_{t;D}^{k,s},
d_t:E_{t;D}^{k,s}\to E_{t;D}^{k+t,s+1-t}
\big\}
\Rightarrow
H^{k+s}_c(\Sigma_D;\Z),
\end{equation}
where
$E_{1;D}^{k,s}=H^{k+s}_c(\SZ_k\setminus\SZ_{k-1};\Z)$ and
$H_c^k(X;\Z)$ denotes the cohomology group with compact supports given by 
$
H_c^k(X;\Z)= \tilde{H}^k(X_+;\Z).
$
\par
Since there is a homeomorphism
$\P_D^{\K}\cong \K^{N(D)}\cong \R^{d(\K)N(D)}$,
by Alexander duality  there is a natural
isomorphism
\begin{equation}\label{Al}
\tilde{H}_k( \Q^{D,\Sigma}_n(\K);\Z)\cong
H_c^{d(\K)N(D)-k-1}(\Sigma_D;\Z)
\quad
\mbox{for any }k.
\end{equation}
By reindexing we obtain a
spectral sequence
\begin{eqnarray}\label{SS}
&&
\big\{E^{t;D}_{k,s}, \tilde{d}^{t}:E^{t;D}_{k,s}\to E^{t;D}_{k+t,s+t-1}
\big\}
\Rightarrow H_{s-k}
( \Q^{D,\Sigma}_n(\K);\Z),
\end{eqnarray}
where
$E^{1;D}_{k,s}=
H^{d(\K)N(D)+k-s-1}_c(\SZ_k\setminus\SZ_{k-1};\Z).$
\begin{lmm}\label{lemma: vector bundle*}
If $\dmin\geq n$ and 
 $1\leq k\leq \lfloor \frac{d_{\rm min}}{n}\rfloor$,
the space
$\SZ_k\setminus\SZ_{k-1}$
is homeomorphic to the total space of a real affine
bundle $\xi_{D,k,n}$ over $C_{k;\Sigma,\K}$ with rank 
$l_{D,k,n}=d(\K)(N(D)-nrk)+k-1$.
\end{lmm}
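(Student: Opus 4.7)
My plan is to construct a natural surjective projection $p_k: \SZ_k \setminus \SZ_{k-1} \to C_{k;\Sigma,\K}$ and to show that its fibers decompose as open $(k-1)$-simplices times affine subspaces of $\P_D^{\K}$ of the expected dimension. Given $({\rm f}, u) \in \SZ_k \setminus \SZ_{k-1}$, the point $u$ lies in the open interior of a unique non-degenerate $(k-1)$-simplex whose vertices are the images of $k$ pairwise distinct pairs $({\rm f}, x_i)$ with $x_i \in \R$ and $F_{(n)}({\rm f})(x_i) \in L_n^{\KS}(\K)$; I would define $p_k({\rm f}, u)$ to be the unordered configuration $\{(x_i, F_{(n)}({\rm f})(x_i))\}_{i=1}^k \in C_{k;\Sigma,\K}$.

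Next, I would identify the fiber $p_k^{-1}(c)$ over $c = \{(x_i, s_i)\}_{i=1}^k$ as the product of two pieces: the open $(k-1)$-simplex, which contributes $k-1$ real dimensions, and the affine subspace of $\P_D^{\K}$ cut out by the linear equations $F_{(n)}({\rm f})(x_i) = s_i$ for $1 \leq i \leq k$. These equations are equivalent to prescribing, for each $1 \leq j \leq r$ and each $1 \leq i \leq k$, the Hermite data $f_j(x_i), f_j'(x_i), \ldots, f_j^{(n-1)}(x_i)$. The core dimension count then invokes Hermite interpolation: because the $x_i$ are pairwise distinct and $nk \leq \dmin \leq d_j$ for every $j$ (this is precisely where the hypothesis $k \leq \lfloor \dmin/n \rfloor$ enters), the $nk$ affine conditions on the monic polynomial $f_j$ of degree $d_j$ are independent and cut out an affine subspace of $\K$-dimension $d_j - nk$. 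Summing over $j$ yields a polynomial fiber of real dimension $d(\K)(N(D) - nrk)$, so the total fiber has real dimension $d(\K)(N(D) - nrk) + k - 1 = l_{D,k,n}$, as required.

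To establish the affine bundle structure, I would work on a neighborhood $U$ of each $c \in C_{k;\Sigma,\K}$ small enough that the $k$ pairs $(x_i, s_i)$ admit a continuous ordering. Explicit Hermite interpolation formulas then provide a continuous section of the polynomial constraints, the polynomials $\{(z-x_1)^n \cdots (z-x_k)^n z^l\}_l$ furnish a continuous basis of the kernel of these constraints, and barycentric coordinates trivialize the simplex factor; taken together these give a trivialization $p_k^{-1}(U) \cong U \times \R^{l_{D,k,n}}$, and since the transition functions are affine in the fiber, we obtain a genuine real affine bundle $\xi_{D,k,n}$. The main technical point requiring care is the verification of local triviality, specifically checking that the general-position embedding $i: Z_D \hookrightarrow \R^N$ entering the non-degenerate simplicial resolution can be arranged so that the barycentric coordinates vary continuously over each trivializing neighborhood and that the Hermite sections glue affinely across patches.
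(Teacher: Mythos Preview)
Your proposal is correct and follows exactly the standard Vassiliev-type argument that the paper has in mind: the paper's own proof is simply ``Since the proof is completely analogous to that of [KY12, Lemma 4.9], we omit detail of the proof,'' and the argument in that reference proceeds precisely via the projection to $C_{k;\Sigma,\K}$, identification of the fiber as an open $(k-1)$-simplex times an affine space cut out by Hermite interpolation conditions, and the dimension count you give. Your observation that the constraint $F_{(n)}({\rm f})(x_i)=s_i$ is an invertible linear reparametrization of the Hermite data $(f_j(x_i),f_j'(x_i),\ldots,f_j^{(n-1)}(x_i))$, together with the use of $nk\leq\dmin\leq d_j$ to guarantee independence of the $nk$ affine conditions on each monic $f_j$, is exactly the mechanism that drives the referenced proof.
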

\begin{proof}
Since the proof is completely analogous to that of \cite[Lemma 4.9]{KY12},
we omit detail of the proof.
\end{proof}
\begin{lmm}\label{lemma: E11}
If $\dmin \geq n$ and
$1\leq k\leq  \lfloor \frac{d_{\rm min}}{n}\rfloor$, there is a natural isomorphism
$$
E^{1;D}_{k,s}\cong
H^{d(\K)nrk-s}_c(C_{k;\Sigma,\K};\pm \Z),
$$
where 
the twisted coefficients system $\pm \Z$  comes from
the Thom isomorphism.
\end{lmm}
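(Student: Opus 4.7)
The plan is to derive the isomorphism directly from Lemma~\ref{lemma: vector bundle*} by applying the Thom isomorphism for a (possibly non-orientable) real affine bundle, followed by a routine arithmetic check on the degree shift.

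First, I recall from Lemma~\ref{lemma: vector bundle*} that the stratum $\SZ_k\setminus\SZ_{k-1}$ is homeomorphic to the total space of a real affine bundle $\xi_{D,k,n}$ of rank $l_{D,k,n}=d(\K)(N(D)-nrk)+k-1$ over the orbit configuration space $C_{k;\Sigma,\K}$. The underlying vector bundle structure is in general not orientable: although the fiber over a point of $L_{k;\Sigma,\K}$ is canonically oriented (being built from a product of linear subspaces together with an open simplex of $\R^N$), the action of the symmetric group $S_k$ used to form the quotient (\ref{Ck}) permutes the factors and can reverse the global orientation. This is precisely the origin of the twisted coefficient system $\pm\Z$ in the target of the isomorphism.

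Next, applying the Thom isomorphism for the affine bundle $\xi_{D,k,n}$ to cohomology with compact supports yields
\begin{equation*}
H_c^{j}(\SZ_k\setminus\SZ_{k-1};\Z)\;\cong\;H_c^{j-l_{D,k,n}}(C_{k;\Sigma,\K};\pm\Z)
\end{equation*}
for every integer $j$. Substituting the index $j=d(\K)N(D)+k-s-1$ from the definition of $E^{1;D}_{k,s}$, the cohomological degree on the right becomes
\begin{equation*}
d(\K)N(D)+k-s-1-\bigl(d(\K)(N(D)-nrk)+k-1\bigr)\;=\;d(\K)nrk-s,
\end{equation*}
which is exactly the degree asserted in the statement.

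The only genuine point requiring care is the verification that the local system in the Thom isomorphism is the one denoted $\pm\Z$; this reduces to tracking how a transposition of two of the $k$ distinct real points in $C_{k;\Sigma,\K}$ acts on the oriented fiber of $\xi_{D,k,n}$. This orientation computation is formally identical to the one carried out in the complex case for $\po^{D,\Sigma}_n(\C)$ in \cite{KY12}, and so can be imported with only cosmetic modifications (the factor $d(\K)$ entering through the real dimension of the fibers of $L_n^{\KS}(\K)$). Apart from this bookkeeping, the remainder of the argument is a direct and routine application of the Thom isomorphism.
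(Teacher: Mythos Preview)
Your proof is correct and follows essentially the same approach as the paper: invoke Lemma~\ref{lemma: vector bundle*} to identify the stratum with the total space of the affine bundle $\xi_{D,k,n}$, apply the Thom isomorphism in compactly supported cohomology, and carry out the degree computation $(d(\K)N(D)+k-s-1)-l_{D,k,n}=d(\K)nrk-s$. Your additional discussion of why the coefficient system $\pm\Z$ arises from the $S_k$-action on the fibers is more detailed than the paper's treatment, but the underlying argument is the same.
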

\begin{proof}
Suppose that $1\leq k\leq \lfloor \frac{d_{\rm min}}{n}\rfloor$.
By Lemma \ref{lemma: vector bundle*}, there is a
homeomorphism
$
(\SZ_k\setminus\SZ_{k-1})_+\cong T(\xi_{D,k}),
$
where $T(\xi_{D,k,n})$ denotes the Thom space of
$\xi_{D,k,n}$.
Since $(d(\K)N(D)+k-s-1)-l_{D,k,n}
=
d(\K)nrk-s,$
the Thom isomorphism gives a natural isomorphism 
$$
E^{1;d}_{k,s}
\cong 
\tilde{H}^{d(\K)N(D)+k-s-1}(T(\xi_{d,k,n});\Z)
\cong
H^{d(\K)nrk-s}_c(C_{k;\Sigma,\K};\pm \Z),
$$
and the assertion follows.
\end{proof}

\section{Stabilization maps}\label{section: stabilization map}

We will now define two stabilization maps
\begin{equation}
\begin{cases}
s_{D,D+\textbf{\textit{a}}}:\Q^{D,\Sigma}_n(\C)\to
\Q^{D,D+\textbf{\textit{a}}}_n(\C) 
\\
s_{D,D+\textbf{\textit{a}}}^{\R}:\Q^{D,\Sigma}_n(\R)\to
\Q^{D,D+\textbf{\textit{a}}}_n(\R)
\end{cases}
\ \ 
\mbox{for each $\textbf{\textit{a}}\not= {\bf 0}_r\in (\Z_{\geq 0})^r$.}
\end{equation}
\begin{dfn}\label{dfn: stab}
{\rm
(i)
For  an $r$-tuple 
$D=(d_1,\cdots ,d_r)\in \N^r$,
let $U_D\subset \C$ denote the subspace defined by
\begin{equation}
U_D=\{w\in \C:\mbox{Re}(w)<N(D)\},
\end{equation}
and let $\varphi_D:\C\stackrel{\cong}{\longrightarrow}
U_D$ be any  homeomorphism (which we now fix)
satisfying the  following two conditions:
\begin{equation} \label{eq: real condition}
\varphi_D (\R)=(-\infty,N(D))\ \ \mbox{and}\ \ 
\varphi_D (\overline{\alpha})=\overline{\varphi_D (\alpha)}
\quad 
\mbox{for any }\alpha\in {\rm H}_+,
\end{equation}
where ${\rm H}_+\subset \C$ denotes the upper half plane in $\C$ given by
\begin{equation}
{\rm H}_+=\{\alpha \in \C: \mbox{ Im }\alpha >0\}.
\end{equation}
\par
(ii)
Now let us choose and fix any
 $r$ points 
$(x_1,\cdots ,x_r)\in (\C\setminus U_D)^r$
satisfying the condition $x_i\not=x_j$ if $i\not= j$.
\par
For each monic polynomial $f(z)=\prod_{k=1}^d(z-\alpha_k)\in \C [z]$
of degree $d$, let $\varphi_D(f)$
denote the monic polynomial of the same degree $d$ given by
\begin{equation}
\varphi_D(f)=\prod_{k=1}^d(z-\varphi_D(\alpha_k)).
\end{equation}
\par
(iii)
For each $r$-tuple
$\textbf{\textit{a}}=(a_1,\cdots ,a_r)\not= {\bf 0}_r\in (\Z_{\geq 0})^r$,
define
the stabilization map
\begin{align}\label{def of stabilization}
s_{D,D+\textbf{\textit{a}}}:&
\Q^{D,\Sigma}_n(\C) \to \Q^{D+\textbf{\textit{a}},\Sigma}_n(\C)
\quad
\mbox{by}
\\
\nonumber
s_{D,D+\textbf{\textit{a}}}(f)&=
(\varphi_D(f_1)(z-x_1)^{a_1},\cdots ,\varphi_D(f_r)(z-x_r)^{a_r})
\end{align}
for $f=(f_1(z),\cdots ,f_r(z))\in \Q^{D,\Sigma}_n(\C)$.
}
\end{dfn}
\begin{rmk}
{\rm
(i)
Note that the definition of the map $s_{D,D+\textbf{\textit{a}}}$ depends
on the choice of the homeomorphism
$\varphi_D$ and the $r$-tuple $(x_1,\cdots ,x_r)\in (\C\setminus U_D)^r$ 
of points, but one can show that
the homotopy type of it does not depend on these choices.
\par
(ii)
Let $\textbf{\textit{a}},\textbf{\textit{b}}\in (\Z_{\geq 0})^r$
be any two $r$-tuples such that
$\textbf{\textit{a}},\textbf{\textit{b}}\not={\bf 0}_r$.
Then it is easy to see that the equality
\begin{equation}\label{eq: stab-compo}
(s_{D+\textbf{\textit{a}},D+\textbf{\textit{a}}+\textbf{\textit{b}}})
\circ (s_{D,D+\textbf{\textit{a}}})
=s_{D,D+\textbf{\textit{a}}+\textbf{\textit{b}}}
\quad
(\mbox{up to homotopy)}
\end{equation}
holds.
Thus we mostly only consider the stabilization map $s_{D,D+\textbf{\textit{e}}_i}$
for each $1\leq i\leq r$, where
$\textbf{\textit{e}}_1=(1,0,\cdots ,0),
\textbf{\textit{e}}_2=(0,1,0,\cdots ,0),\cdots ,
\textbf{\textit{e}}_r=(0,0,\cdots ,0,1)\in \R^r$
denote the standard orthogonal basis of $\R^r$.
\par
(iii)
From (\ref{eq: real condition}) it  easily follows  that 
\begin{equation}\label{eq: varphi} 
\varphi_D(f)\in \R [z]
\quad
\mbox{ if }f=f(z)\in \R [z].
\end{equation}
Thus, 
for each $r$-tuple
$\textbf{\textit{a}}=(a_1,\cdots ,a_r)\not= {\bf 0}_r\in (\Z_{\geq 0})^r$,
one can easily show that the following holds:
\begin{equation}\label{eq: sd-R}
s_{D,D+\textbf{\textit{a}}}(\Q^{D,\Sigma}_n(\R))\subset
\Q^{D+\textbf{\textit{a}},\Sigma}_n(\R).
\end{equation}
}
\end{rmk}
\begin{dfn}
{\rm
By (\ref{eq: sd-R}), one can define the stabilization map}
\begin{align}
s_{D,D+\textbf{\textit{a}}}^{\R}&:
\Q^{D,\Sigma}_n(\R) \to \Q^{D+\textbf{\textit{a}},\Sigma}_n(\R)
\quad
\mbox{\rm by the restriction}
\\
\nonumber
&\ \ 
s_{D,D+\textbf{\textit{a}}}^{\R}=s_{D,D+\textbf{\textit{a}}}\vert \Q^{D,\Sigma}_n(\R).
\end{align}
\end{dfn}
\begin{rmk}\label{rmk: equiv-remark}
{\rm
By using the definition of (\ref{def of stabilization}) and (\ref{eq: varphi})
we easily see that the following equality holds:
\begin{equation}\label{eq: stabZ2}
s_{D,D+\textbf{\textit{a}}}^{\R}=(s_{D,D+\textbf{\textit{a}}})^{\Z_2}
\quad
\mbox{for each 
$\textbf{\textit{a}}\not= {\bf 0}_r\in (\Z_{\geq 0})^r$.}
\ 
\qed
\end{equation}
}
\end{rmk}
\section{Homology stability}\label{section: homology stability}
We shall consider the homology stability of the space $\Q^{D,\Sigma}_n(\K)$.
\paragraph{6.1 The case $\K=\C$.}
First, consider the case $\K=\C$.
Let $1\leq i\leq r$ and consider
the stabilization map
\begin{equation}
s_{D,D+\textbf{\textit{e}}_i}:\Q^{D,\Sigma}_n(\C)
\to 
\Q^{D+\textbf{\textit{e}}_i,\Sigma}_n(\C).
\end{equation}
It is easy to see that it extends to an open embedding
\begin{equation}\label{equ: sssd}
s_{D,i}:\C \times 
\Q^{D,\Sigma}_n(\C)
\to
\Q^{D+\textbf{\textit{e}}_i,\Sigma}_n(\C)
\end{equation}
It also naturally extends to an open embedding
$
\tilde{s}_{D,i}:\P_D^{\C}\to \P_{D+\textit{\textbf{e}}_i}^{\C}
$
and  by  restriction  we obtain an open embedding
\begin{equation}\label{equ: open embedding}
\tilde{s}_{D,i}:\C\times \Sigma_D\to 
\Sigma_{D+\textit{\textbf{e}}_i}.
\end{equation}
Since one-point compactification is contravariant for open embeddings,
this map induces a map in the opposite direction
\begin{equation}
\tilde{s}_{D,i +}:(\Sigma_{D+\textit{\textbf{e}}_i})_+
\to
(\C \times \Sigma_D)_+=S^{2}\wedge \Sigma_{D+}.
\end{equation}

We obtain the following commutative diagram
\begin{equation}\label{eq: diagram}
\begin{CD}
\tilde{H}_k(\Q^{D,\Sigma}_n(\C);\Z) @>(s_{D,D+\textbf{\textit{e}}_i})_*>>
\tilde{H}_k(\Q^{D+\textit{\textbf{e}}_i,\Sigma}_n(\C);\Z)
\\
@V{AD_1}V{\cong}V @V{AD_2}V{\cong}V
\\
H^{2N(D)-k-1}_c(\Sigma_D;\Z)
@>(\tilde{s}_{D,i+})^{*}>>
H^{2N(D)-k+1}_c(\Sigma_{D+\textit{\textbf{e}}_i};\Z).
\end{CD}
\end{equation}
Here,  $AD_k$ $(k=1,2)$ denote  the corresponding Alexander duality isomorphisms and
 ${\tilde{s}_{D,i+}}^{\ *}$ denotes the composite of 
the suspension isomorphism with the homomorphism
${(\tilde{s}_{D+})^*}$ given by
\begin{equation}
H^{M}_c(\Sigma_D;\Z)
\stackrel{\cong}{\rightarrow}
H^{M+2}_c
(\C\times \Sigma_D;\Z)
\stackrel{(\tilde{s}_{D,i+})^*}{\longrightarrow}
H^{M+2}_c(\Sigma_{D+\textit{\textbf{e}}_i};\Z),
\end{equation}
where $M=2N(D)-k-1$.
\par
By the universality of the non-degenerate simplicial resolution
\cite{Mo2}, 
the map $\tilde{s}_{D,i}$ also naturally extends to a filtration preserving open embedding
\begin{equation}\label{equ: flitr-preserve map}
\tilde{s}_{D,i}:\C \times \SZ \to \mathcal{X}^{D+\textbf{\textit{e}}_i}
\end{equation}
between non-degenerate simplicial resolutions.
This  induces a filtration preserving map
\begin{equation}
(\tilde{s}_{D,i})_+:
\mathcal{X}^{D+\textbf{\textit{e}}_i}_+\to 
(\C \times \SZ)_+
=S^{2}\wedge \SZ_+,
\end{equation}
and we finally obtain the homomorphism of spectral sequences
\begin{align}\label{equ: theta1}
&
\{ \tilde{\theta}_{k,s}^t:E^{t;D}_{k,s}\to 
E^{t;D+\textit{\textbf{a}}}_{k,s}\},
\quad
\mbox{where}
\\
\nonumber
&
\begin{cases}
\big\{E^{t;D}_{k,s}, \tilde{d}^{t}:
E^{t;D}_{k,s}\to E^{t;D}_{k+t,s+t-1}
\big\}
&\Rightarrow 
 H_{s-k}(\Q^{D,\Sigma}_n(\C);\Z),
\\
\big\{E^{t;D+\textit{\textbf{e}}_i}_{k,s}, \tilde{d}^{t}:
E^{t;D+\textit{\textbf{e}}_i}_{k,s}\to 
E^{t;D+\textit{\textbf{e}}_i}_{k+t,s+t-1}
\big\}
&\Rightarrow 
 H_{s-k}(\Q^{D+\textit{\textbf{e}}_i}_n(\C);\Z),
 \end{cases}
\\
\nonumber
&
\begin{cases}
E^{1;D}_{k,s} &=
H_c^{2N(D)+k-1-s}(\SZ_k\setminus \SZ_{k-1};\Z),
\\
E^{1;D+\textit{\textbf{e}}_i}_{k,s}
&=
H_c^{2N(D)+k+1-s}
(\mathcal{X}_k^{D+\textit{\textbf{e}}_i}\setminus 
\mathcal{X}_{k-1}^{D+\textit{\textbf{e}}_i};\Z).
\end{cases}
\end{align}
\begin{lmm}\label{lmm: E1}
If $1\leq i\leq r$ and
$0\leq k\leq \lfloor \frac{d_{\rm min}}{n}\rfloor$, 
$\tilde{\theta}^1_{k,s}:E^{1;D}_{k,s}\to 
E^{1;D+\textit{\textbf{e}}_i}_{k,s}$ is
an isomorphism for any $s$.
\end{lmm}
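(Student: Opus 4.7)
The idea is to use the affine bundle description of the strata given by Lemma \ref{lemma: vector bundle*} together with the naturality of the Thom isomorphism used to identify the $E^1$-terms in Lemma \ref{lemma: E11}.

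First, I would recall from Lemma \ref{lemma: vector bundle*} that for $1\leq k\leq \lfloor d_{\rm min}/n\rfloor$ the stratum $\SZ_k\setminus \SZ_{k-1}$ is the total space of a real affine bundle $\xi_{D,k,n}$ over the configuration space $C_{k;\Sigma,\C}$ of rank $l_{D,k,n}=2(N(D)-nrk)+k-1$, and similarly, replacing $D$ by $D+\textit{\textbf{e}}_i$, the stratum $\mathcal{X}^{D+\textit{\textbf{e}}_i}_k\setminus \mathcal{X}^{D+\textit{\textbf{e}}_i}_{k-1}$ is the total space of a real affine bundle $\xi_{D+\textit{\textbf{e}}_i,k,n}$ over the \emph{same} base $C_{k;\Sigma,\C}$ of rank $l_{D+\textit{\textbf{e}}_i,k,n}=l_{D,k,n}+2$. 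Note that because $k\leq \lfloor d_{\rm min}/n\rfloor\leq \lfloor d_{\rm min}^{D+\textit{\textbf{e}}_i}/n\rfloor$, the bundle description holds on both sides.

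Second, I would analyze the filtration-preserving open embedding $\tilde{s}_{D,i}$ from (\ref{equ: flitr-preserve map}). By construction (see (\ref{def of stabilization})), the stabilization $s_{D,D+\textit{\textbf{e}}_i}$ multiplies $f_i$ by $(z-x_i)$ with $x_i\notin U_D$ and conjugates the remaining roots by the homeomorphism $\varphi_D$ into $U_D$. Since $x_i$ lies outside $U_{D+\textit{\textbf{e}}_i}$, no real bad-root of the new polynomial sits there, so the configuration of bad real roots of $\tilde s_{D,i}(z,f)$ is the same as that of $f$, together with an open extra $\C$-parameter. Consequently the restriction of $\tilde s_{D,i}$ to filtration strata,
\begin{equation}
\tilde{s}_{D,i}:\C\times (\SZ_k\setminus\SZ_{k-1})\hookrightarrow \mathcal{X}^{D+\textit{\textbf{e}}_i}_k\setminus \mathcal{X}^{D+\textit{\textbf{e}}_i}_{k-1},
\end{equation}
is a fiberwise open embedding covering the identity map of $C_{k;\Sigma,\C}$, between affine bundles of the same rank $l_{D+\textit{\textbf{e}}_i,k,n}=l_{D,k,n}+2$.

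Third, I would invoke the naturality of the Thom isomorphism. Under Lemma \ref{lemma: E11}, both sides are naturally identified with $H^{2nrk-s}_c(C_{k;\Sigma,\C};\pm\Z)$. A fiberwise open embedding of real affine bundles of the same rank over a common base, covering the identity, is a homotopy equivalence of bundles (each fiber embedding $\R^l\hookrightarrow\R^l$ is a homotopy equivalence, and the embedding clearly preserves the local orientation system $\pm\Z$ coming from the Thom class). Consequently $\tilde\theta^1_{k,s}$ corresponds under these identifications to the identity of $H^{2nrk-s}_c(C_{k;\Sigma,\C};\pm\Z)$, which proves the claim.

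The step requiring the most care is verifying that the open embedding is genuinely \emph{fiberwise} over $C_{k;\Sigma,\C}$, i.e.\ that the stabilization does not move the configuration of bad roots, and that it is orientation-preserving on fibers so the Thom classes match up to sign. Both follow from the explicit formula (\ref{def of stabilization}) together with the choice of $x_i\notin U_D$ and the property (\ref{eq: real condition}) of $\varphi_D$, but this is where one has to look carefully at the identifications used in Lemma \ref{lemma: vector bundle*}.
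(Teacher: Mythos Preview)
Your approach is correct and is precisely the argument that the paper defers to \cite[Lemma 4.13]{KY12}: identify both strata as affine bundles of the same rank over $C_{k;\Sigma,\C}$ via Lemma \ref{lemma: vector bundle*}, and use naturality of the Thom isomorphism from Lemma \ref{lemma: E11}.

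Two small points. First, the case $k=0$ is not covered by the bundle argument; it should be stated separately (both sides reduce to $\Z$ in degree $0$ and vanish otherwise, and the map is the obvious isomorphism). Second, the open embedding $\tilde{s}_{D,i}$ does \emph{not} literally cover the identity of $C_{k;\Sigma,\C}$: the homeomorphism $\varphi_D$ moves the real configuration points, and the labels $F_{(n)}(f)(x_j)\in L_n^{\KS}(\C)$ transform nontrivially under the substitution $f_l\mapsto \varphi_D(f_l)$. What one actually checks (and this is the content of the care you rightly flag in your last paragraph) is that the induced map on the base is a self-homeomorphism of $C_{k;\Sigma,\C}$ isotopic to the identity, and that the orientation local systems match; this is exactly what is needed for the Thom-isomorphism comparison to go through.
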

\begin{proof}
Since the proof is completely analogous to that of \cite[Lemma 4.13]{KY12},
we omit the detail.
\end{proof}

Now we consider the spectral sequences induced by 
truncated simplicial resolutions.

\begin{dfn}
{\rm
Let $X^{\Delta}$ denote the truncated 
(after the $\lfloor \frac{d_{\rm min}}{n}\rfloor$-th term) simplicial resolution of $\Sigma_D$
with the natural filtration
$$
\emptyset =X^{\Delta}_0\subset
X^{\Delta}_1\subset \cdots\subset
X^{\Delta}_{\lfloor \dmin/n\rfloor}\subset 
X^{\Delta}_{\lfloor \dmin/n\rfloor+1}=
X^{\Delta}_{\lfloor \dmin/n\rfloor +2}=\cdots =X^{\Delta},
$$
where $X^{\Delta}_k=\SZ_k$ if $k\leq \lfloor \frac{d_{\rm min}}{n}\rfloor$ and 
$X^{\Delta}_k=X^{\Delta}$ if $k\geq \lfloor \frac{d_{\rm min}}{n}\rfloor +1$.
\par\vspace{1mm}\par
Similarly,
let $Y^{\Delta}$ denote the truncated (after the $\lfloor \frac{d_{\rm min}}{n}\rfloor$-th term) simplicial resolution of 
$\Sigma_{D+\textit{\textbf{e}}_i}$
with the natural filtration
$$
\emptyset =Y^{\Delta}_0\subset
Y^{\Delta}_1\subset \cdots\subset
Y^{\Delta}_{\lfloor d_{\rm min}/n\rfloor}\subset 
Y^{\Delta}_{\lfloor d_{\rm min}/n\rfloor+1}=
Y^{\Delta}_{\lfloor d_{\rm min}/n\rfloor +2}=\cdots =Y^{\Delta},
$$
where $Y^{\Delta}_k=\mathcal{X}^{D+\textbf{\textit{e}}_i}_k$ if 
$k\leq \lfloor \frac{d_{\rm min}}{n}\rfloor$ and 
$Y^{\Delta}_k=Y^{\Delta}$ if $k\geq \lfloor \frac{d_{\rm min}}{n}\rfloor +1$.
}
\end{dfn}
By 
\cite[\S 2 and \S 3]{Mo3}, 
we obtain the following {\it  truncated spectral sequences}
\begin{align}\label{equ: spectral sequ2}
&
\begin{cases}
\dis
\big\{E^{t;\C}_{k,s}, d^{t}:E^{t;\C}_{k,s}\to 
E^{t;\C}_{k+t,s+t-1}
\big\}
&\ \Rightarrow H_{s-k}(\Q^{D,\Sigma}_n(\C);\Z),
\\
\dis
\big\{\ 
^{\p}E^{t;\C}_{k,s},\  d^{t}:\ ^{\p}E^{t;\C}_{k,s}\to 
\  ^{\p}E^{t}_{k+t,s+t-1}
\big\}
&\ \Rightarrow H_{s-k}(\Q^{D+\textit{\textbf{e}}_i}_n(\C);\Z),
\end{cases}
\\
\nonumber
& \quad \mbox{ where}
\\
\ \ & \label{eq:11}
\begin{cases}
E^{1;\C}_{k,s}&=\  H_c^{2N(D)+k-1-s}(X^{\Delta}_k\setminus X^{\Delta}_{k-1};\Z),
\\
^{\p}E^{1;\C}_{k,s}&=\  H_c^{2N(D)+k+1-s}(Y^{\Delta}_k\setminus Y^{\Delta}_{k-1};\Z).
\end{cases}
\end{align}
By the naturality of truncated simplicial resolutions,
the filtration preserving map
$\tilde{s}_{D,i}:\C \times \SZ \to \mathcal{X}^{D+\textbf{\textit{e}}_i}$
  gives rise to a natural filtration preserving map
$\tilde{s}_{D,i}^{\p}:\C \times X^{\Delta} \to Y^{\Delta}$
which, in a way analogous to  (\ref{equ: theta1}), induces
a homomorphism of spectral sequences 
\begin{equation}\label{equ: theta2}
\{ \theta_{k,s}^t:E^{t;\C}_{k,s}\to \ ^{\p}E^{t;\C}_{k,s}\}.
\end{equation}
\begin{lmm}\label{lmm: Ed}
\begin{enumerate}
\item[$\I$]
If $k<0$ or $k\geq \lfloor \frac{\dmin}{n}\rfloor +2$,
$E^{1;\C}_{k,s}=\ ^{\p}E^{1;\C}_{k,s}=0$ for any $s$.
\item[$\II$]
$E^{1;\C}_{0,0}=\ ^{\p}E^{1;\C}_{0,0}=\Z$ 
and $E^{1;\C}_{0,s}=\ ^{\p}E^{1;\C}_{0,s}=0$ if $s\not= 0$.
\item[$\III$]
If $1\leq k\leq \lfloor \frac{d_{\rm min}}{n}\rfloor$, there are  
isomorphisms
$$
E^{1;\C}_{k,s}\cong \ ^{\p}E^{1;\C}_{k,s}\cong H^{2nrk-s}_c(C_{k;\Sigma};\pm \Z).
$$
\item[$\IV$]
If $1\leq k\leq \lfloor \frac{d_{\rm min}}{n}\rfloor$, 
$E^{1;\C}_{k,s}=\ ^{\p}E^{1;\C}_{k,s}=0$ for any 
$s\leq (2n\rmin (\Sigma)-1)k-1.$
\item[$\V$]	
If $k=\lfloor \frac{d_{\rm min}}{n}\rfloor +1$,
$E^{1;\C}_{k,s}=
\ ^{\p}E^{1;\C}_{k,s} =0
$ 
for any 
$s\leq (2n\rmin (\Sigma)-1)\lfloor \frac{d_{\rm min}}{n}\rfloor-1$.
\end{enumerate}
\end{lmm}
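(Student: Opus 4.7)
My plan is to treat the five claims in three stages: (i)--(iii) are formal or direct identifications, (iv) is a dimension count, and (v) is the genuine technical core.

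For (i) I will just invoke the definition of the truncated filtration: when $k<0$ both $X^{\Delta}_k$ and $X^{\Delta}_{k-1}$ are empty, while for $k\ge \lfloor d_{\min}/n\rfloor+2$ both equal $X^{\Delta}$ (respectively $Y^{\Delta}$), so the successive difference is empty and the $E^1$-term vanishes. For (ii) I will appeal to the standard augmentation convention for Vassiliev-type spectral sequences arising from an affine ambient space: the $(0,0)$-entry carries the $\Z$ coming from the fundamental class of $\P_D^{\C}\cong\C^{N(D)}$, and this generator is preserved by $\theta^1_{0,0}$. For (iii) I will use the fact that in the range $1\le k\le\lfloor d_{\min}/n\rfloor$ the truncated filtration literally coincides with the non-degenerate one, so $E^{1;\C}_{k,s}=E^{1;D}_{k,s}$ and $\ ^{\p}E^{1;\C}_{k,s}=E^{1;D+\textbf{\textit{e}}_i}_{k,s}$; Lemma \ref{lemma: E11} with $d(\C)=2$ then identifies both with $H_c^{2nrk-s}(C_{k;\Sigma,\C};\pm\Z)$ (noting that $C_{k;\Sigma,\C}$ is independent of $D$), and the required isomorphism between the two sides is precisely $\theta^1_{k,s}$ from Lemma \ref{lmm: E1}.

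Part (iv) will be a short dimension estimate. Combining (iii) with (\ref{eq: dim CSigma}) for $\K=\C$ gives $\dim C_{k;\Sigma,\C}=k+2kn(r-\rmin(\Sigma))$, so $H_c^{2nrk-s}(C_{k;\Sigma,\C};\pm\Z)=0$ whenever $2nrk-s>k+2kn(r-\rmin(\Sigma))$. Rearranging yields $s<k(2n\rmin(\Sigma)-1)$, that is, $s\le(2n\rmin(\Sigma)-1)k-1$.

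The hard part will be (v). Setting $K=\lfloor d_{\min}/n\rfloor$, I must control the compactly supported cohomology of $X^{\Delta}_{K+1}\setminus X^{\Delta}_K$ (and symmetrically of $Y^{\Delta}_{K+1}\setminus Y^{\Delta}_K$). Following the truncated resolution construction of \cite[\S 2--\S 3]{Mo3}, this top stratum is an open-cone bundle over the sub-discriminant locus where $\pi_D$ has more than $K$ real preimages, with fiber adding exactly one coordinate compared to the non-truncated stratum $\SZ_K\setminus\SZ_{K-1}$. My target estimate is therefore $\dim(X^{\Delta}_{K+1}\setminus X^{\Delta}_K)\le\dim(\SZ_K\setminus\SZ_{K-1})+1=2N(D)+2K-2nK\rmin(\Sigma)$, from which $H_c^{2N(D)+K-s}(\,\cdot\,;\Z)=0$ whenever $2N(D)+K-s>2N(D)+2K-2nK\rmin(\Sigma)$, i.e.\ $s<K(2n\rmin(\Sigma)-1)$, as required; the identical bound will handle $\ ^{\p}E^{1;\C}_{K+1,s}$. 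The delicate point will be justifying this dimension bound rigorously, because for an $r$-tuple $\mathrm{f}$ the number of real bad points can in principle exceed $K$ once several primitive collections $\sigma\in I(\KS)$ contribute independently at distinct real loci; I plan to handle this by stratifying the discriminant according to which primitive collections are active at each bad point and running the estimate stratum by stratum, in the spirit of \cite[Lemma 4.14]{KY12}.
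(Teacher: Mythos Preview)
Your approach is essentially identical to the paper's: parts (i)--(iv) match verbatim, and for (v) you correctly target the bound $\dim(X^{\Delta}_{K+1}\setminus X^{\Delta}_K)\le\dim(\SZ_K\setminus\SZ_{K-1})+1$ and deduce the vanishing range from it. However, the ``delicate point'' you flag is not actually delicate: the paper simply invokes \cite[Lemma 2.1]{Mo3}, which gives this dimension bound as a \emph{formal} property of the truncated simplicial resolution construction---the $(K{+}1)$-st stratum is, by definition, obtained by coning off over the locus with more than $K$ preimages, so its dimension exceeds that of the $K$-th stratum by at most one regardless of how many real bad points an individual $\mathrm{f}$ may have or which primitive collections are active. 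No stratification of the discriminant in the spirit of \cite[Lemma 4.14]{KY12} is needed, and you can drop that part of the plan.
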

\begin{proof}
Let us write $\rmin =\rmin (\Sigma)$
and $d_{\rm min}^{\p}=\lfloor \frac{\dmin}{n}\rfloor$.
Since the proofs of both cases are identical,  it suffices to prove the assertions for $E^{1;\C}_{k,s}$.
\par
(i), (ii), (iii):
Since $X^{\Delta}_k=X^{\Delta}$ for any $k\geq d_{\rm min}^{\p}+2$,
the assertions (i) and (ii) are clearly true.
Since $X^{\Delta}_k=\SZ_k$ for any $k\leq d_{\rm min}^{\p}$,
the assertion
(iii) easily follows from Lemma \ref{lemma: E11}.
\par
(iv)
Suppose that $1\leq k\leq d_{\rm min}^{\p}$. 
By using the equality (\ref{eq: dim CSigma}),
$$
2nrk-s > \dim C_{k;\Sigma}
\
\Leftrightarrow \
s\leq (2n\rmin -1)k-1.
$$
Thus, the assertion (iv) follows from the isomorphism given by (iii).
\par
(v)
By Lemma \cite[Lemma 2.1]{Mo3}, we see that
\begin{align*}
\dim (X^{\Delta}_{d_{\rm min}^{\p}+1}\setminus 
X^{\Delta}_{d_{\rm min}^{\p}})
&=
\dim (\SZ_{d_{\rm min}^{\p}}\setminus \SZ_{d_{\rm min}^{\p}-1})+1
=l_{D,d_{\rm min}^{\p},n}+\dim C_{d_{\rm min}^{\p};\Sigma}+1
\\
&=2N(D)+2d_{\rm min}^{\p}-2n\rmin d_{\rm min}^{\p}.
\end{align*}
Since
$E^{1;\C}_{d_{\rm min}^{\p}+1,s}=
H_c^{2N(D)+d_{\rm min}^{\p}-s}
(X^{\Delta}_{d_{\rm min}^{\p}+1}\setminus X^{\Delta}_{d_{\rm min}^{\p}};\Z)$
\ (by (\ref{eq:11}))
\ and
\begin{align*}
2N(D)+d_{\rm min}^{\p}-s
&>
\dim (X^{\Delta}_{d_{\rm min}^{\p}+1}\setminus X^{\Delta}_{d_{\rm min}^{\p}})
=2N(D)+2d_{\rm min}^{\p}-2n\rmin d_{\rm min}^{\p}
\\
\Leftrightarrow
s&<(2nr_{\rm min}-1)d_{\rm min}^{\p}
\Leftrightarrow
s\leqq (2nr_{\rm min}-1)d_{\rm min}^{\p}-1,
\end{align*}
we see that
$E^{1;\C}_{d_{\rm min}^{\p}+1,s}=0$ for any $s\leq (2n\rmin -1)d_{\rm min}^{\p}-1$.
\end{proof}

\begin{lmm}\label{lmm: E2}
If $0\leq k\leq \lfloor \frac{d_{\rm min}}{n}\rfloor$, 
$\theta^1_{k,s}:E^{1;\C}_{k,s}\stackrel{\cong}{\longrightarrow} 
\ ^{\p}E^{1;\C}_{k,s}$ is
an isomorphism for any $s$.
\end{lmm}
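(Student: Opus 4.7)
The plan is to reduce this lemma directly to Lemma \ref{lmm: E1} by observing that the truncation has no effect on the $E^1$-page in the relevant range. The key point is that the truncated simplicial resolutions $X^{\Delta}$ and $Y^{\Delta}$ are constructed from $\mathcal{X}^D$ and $\mathcal{X}^{D+\textit{\textbf{e}}_i}$ by collapsing everything above the $\lfloor d_{\rm min}/n\rfloor$-th filtration stage; therefore, for every $k$ with $1 \leq k \leq \lfloor d_{\rm min}/n\rfloor$ the filtration terms satisfy
$$
X^{\Delta}_k = \mathcal{X}^D_k, \quad X^{\Delta}_{k-1} = \mathcal{X}^D_{k-1}, \quad Y^{\Delta}_k = \mathcal{X}^{D+\textit{\textbf{e}}_i}_k, \quad Y^{\Delta}_{k-1} = \mathcal{X}^{D+\textit{\textbf{e}}_i}_{k-1}.
$$
Comparing the exponents $2N(D)+k-1-s$ appearing in (\ref{eq:11}) with those in (\ref{equ: theta1}) and using $N(D+\textit{\textbf{e}}_i) = N(D)+1$, the successive filtration quotients then yield the identifications $E^{1;\C}_{k,s} = E^{1;D}_{k,s}$ and $^{\p}E^{1;\C}_{k,s} = E^{1;D+\textit{\textbf{e}}_i}_{k,s}$ in this range.

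Next, I would verify that the comparison map $\theta^1_{k,s}$ agrees with $\tilde{\theta}^1_{k,s}$ under these identifications. This follows from the naturality of the truncated simplicial resolution construction: the filtration preserving map $\tilde{s}^{\p}_{D,i}\colon \C\times X^{\Delta}\to Y^{\Delta}$ is by construction induced from the filtration preserving open embedding $\tilde{s}_{D,i}\colon \C\times \SZ \to \mathcal{X}^{D+\textit{\textbf{e}}_i}$ of (\ref{equ: flitr-preserve map}), and in the range $k\leq \lfloor d_{\rm min}/n\rfloor$ the two maps coincide on the relevant filtration quotients. Given these two identifications, $\theta^1_{k,s}$ is an isomorphism for $1\leq k\leq \lfloor d_{\rm min}/n\rfloor$ by Lemma \ref{lmm: E1}.

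The boundary case $k=0$ is immediate from Lemma \ref{lmm: Ed}(ii), which gives $E^{1;\C}_{0,s}=\ ^{\p}E^{1;\C}_{0,s}=\Z$ for $s=0$ and $0$ otherwise, so $\theta^1_{0,s}$ is trivially an isomorphism (both groups are the contribution of the basepoint coming from the empty term $X^{\Delta}_0 = Y^{\Delta}_0 = \emptyset$). Combining these cases yields the lemma.

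I do not foresee a serious obstacle; the argument is essentially bookkeeping, ensuring that the shifts in cohomological degree coming from replacing $D$ by $D+\textit{\textbf{e}}_i$ and from passing between truncated and non-truncated resolutions are consistent, and that the comparison maps on the $E^1$-pages coincide. The only step requiring some care is the precise compatibility check in the second paragraph — namely that the naturality procedure of \cite{Mo3} used to extend $\tilde{s}_{D,i}$ to the truncated resolutions preserves the filtration quotients identically in the range $k\leq \lfloor d_{\rm min}/n\rfloor$ — but this is built into the definition of the truncated simplicial resolution and requires no new input.
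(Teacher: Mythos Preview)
Your proposal is correct and follows essentially the same approach as the paper: both argue that because $X^{\Delta}_k=\SZ_k$ and $Y^{\Delta}_k=\mathcal{X}^{D+\textbf{\textit{e}}_i}_k$ for $k\leq \lfloor d_{\rm min}/n\rfloor$, the truncated $E^1$-terms and comparison map coincide with the untruncated ones, and the result then follows from Lemma~\ref{lmm: E1}. The paper's proof is a one-liner; your version is more detailed (and handles $k=0$ separately via Lemma~\ref{lmm: Ed}(ii), which is harmless though unnecessary since Lemma~\ref{lmm: E1} already covers $k=0$).
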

\begin{proof}
Since $(X^{\Delta}_k,Y^{\Delta}_k)=(\SZ_k,\mathcal{X}^{D+\textbf{\textit{e}}_i}_k)$ 
for $k\leq \lfloor \frac{\dmin}{n}\rfloor$,
the assertion follows from Lemma \ref{lmm: E1}.
\end{proof}
\begin{thm}\label{thm: III}
For each $1\leq i\leq r$,
the stabilization map
$$
s_{D,D+\textbf{\textit{e}}_i}:
\Q^{D,\Sigma}_n(\C)
\to 
\Q^{D+\textbf{\textit{e}}_i,\Sigma}_n(\C)
$$
is a homology equivalence through dimension
$d(D;\Sigma ,n,\C)$. 
\end{thm}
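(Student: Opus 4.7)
The plan is a standard spectral sequence comparison argument using the two truncated spectral sequences constructed in \S 6.1 together with the vanishing and isomorphism ranges already established in Lemmas \ref{lmm: Ed} and \ref{lmm: E2}. Write $d_{\min}^{\p}=\lfloor d_{\min}/n\rfloor$ and $\rmin=\rmin(\Sigma)$, so that $d(D;\Sigma,n,\C)=(2n\rmin-2)d_{\min}^{\p}-2$.

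First, I recall the homomorphism of truncated spectral sequences
$$
\theta^{t}_{k,s}\colon E^{t;\C}_{k,s}\to {}^{\p}E^{t;\C}_{k,s},
$$
induced by the filtration-preserving open embedding $\tilde{s}_{D,i}^{\p}:\C\times X^{\Delta}\to Y^{\Delta}$ of truncated simplicial resolutions, where the two sequences converge to $H_{s-k}(\Q^{D,\Sigma}_n(\C);\Z)$ and $H_{s-k}(\Q^{D+\e_i,\Sigma}_n(\C);\Z)$ respectively, with $\theta^\infty_{k,s}$ compatible with the map induced by $s_{D,D+\e_i}$ on homology (via Alexander duality and the suspension isomorphism, as in diagram (\ref{eq: diagram})).

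Next, I would verify that $\theta^{1}_{k,s}$ is an isomorphism for every pair $(k,s)$ with $s-k\leq d(D;\Sigma,n,\C)$. The cases split as follows:
\begin{enumerate}
\item[(a)] For $k\leq d_{\min}^{\p}$, Lemma \ref{lmm: E2} gives the isomorphism $\theta^{1}_{k,s}\colon E^{1;\C}_{k,s}\xrightarrow{\cong}{}^{\p}E^{1;\C}_{k,s}$ for every $s$.
\item[(b)] For $k\geq d_{\min}^{\p}+2$, parts (i)--(ii) of Lemma \ref{lmm: Ed} give $E^{1;\C}_{k,s}={}^{\p}E^{1;\C}_{k,s}=0$.
\item[(c)] For $k=d_{\min}^{\p}+1$, part (v) of Lemma \ref{lmm: Ed} gives vanishing on both sides provided $s\leq(2n\rmin-1)d_{\min}^{\p}-1$. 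The hypothesis $s-k\leq d(D;\Sigma,n,\C)$ forces
$$
s\;\leq\;(2n\rmin-2)d_{\min}^{\p}-2+(d_{\min}^{\p}+1)\;=\;(2n\rmin-1)d_{\min}^{\p}-1,
$$
so this bound is satisfied and both $E^{1}$-terms vanish.
\end{enumerate}
Thus $\theta^{1}_{k,s}$ is an isomorphism whenever $s-k\leq d(D;\Sigma,n,\C)$.

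Finally, since the differentials $d^{t}\colon E^{t}_{k,s}\to E^{t}_{k+t,s+t-1}$ preserve the strict inequality $s-k\leq d(D;\Sigma,n,\C)$ (the total degree $s-k$ decreases by $1$ under each $d^{t}$), an induction on $t$ (the Zeeman comparison principle applied to these first-quadrant-type sequences) propagates the isomorphism from $\theta^{1}$ to $\theta^{\infty}$ in the same range. Passing to the associated graded of the filtration on $H_{*}$ then yields that the induced map
$$
(s_{D,D+\e_i})_*\colon H_{m}(\Q^{D,\Sigma}_{n}(\C);\Z)\to H_{m}(\Q^{D+\e_i,\Sigma}_{n}(\C);\Z)
$$
is an isomorphism for all $m\leq d(D;\Sigma,n,\C)$, which is exactly the assertion.

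The main obstacle is pinpointing the correct stability dimension: the bookkeeping that ties the vanishing range in Lemma \ref{lmm: Ed}(v) (on the row $k=d_{\min}^{\p}+1$) to the exponent in $d(D;\Sigma,n,\C)=(2n\rmin-2)d_{\min}^{\p}-2$ is what dictates the statement. Everything else is formal, since the truly substantive content — the compatibility of the simplicial resolutions with $\tilde{s}_{D,i}$ and the bundle structure over $C_{k;\Sigma,\C}$ — has been packaged in the preceding lemmas.
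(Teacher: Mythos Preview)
Your argument has a genuine gap in the final paragraph. To pass from $\theta^{t}$ being an isomorphism to $\theta^{t+1}$ being an isomorphism at a position $(k,s)$, you must control not only the outgoing differential (which, as you note, decreases the total degree $s-k$) but also the \emph{incoming} differential $d^{t}\colon E^{t}_{k-t,s-t+1}\to E^{t}_{k,s}$, whose source lies in total degree $s-k+1$. At the boundary $s-k=N:=d(D;\Sigma,n,\C)$ this source sits in degree $N+1$, and at the single point $(d_{\min}^{\prime}+1,\,(2n\rmin-1)d_{\min}^{\prime})$ you have \emph{not} shown that $\theta^{1}$ is an isomorphism (nor even a surjection). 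So the naive induction on $t$ does not close up, and no form of the Zeeman comparison theorem applies from the hypothesis ``$\theta^{1}$ iso for $s-k\le N$'' alone.

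The paper repairs exactly this point by exploiting the stronger information you already have but do not use: $\theta^{1}_{k,s}$ is an isomorphism for \emph{all} $s$ when $k\le d_{\min}^{\prime}$ (Lemma \ref{lmm: E2}), and the only column where $\theta^{1}$ is uncontrolled is $k=d_{\min}^{\prime}+1$, and there only for $s\ge (2n\rmin-1)d_{\min}^{\prime}$. The paper then tracks, page by page, how this single ``unknown'' region propagates backwards under the differentials (the sets $\mathcal{S}_t$, equivalently $A_t$) and checks that within the range $0\le k\le d_{\min}^{\prime}$ it never reaches below $s-k=N+2$; together with the vanishing $E^{1}_{d_{\min}^{\prime}+1,s}=0$ for $s-k\le N$ this yields the conclusion. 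Your write-up becomes correct once you replace the last paragraph by this sharper propagation argument (or, equivalently, once you note that the sources of all incoming differentials hitting degree $\le N$ lie either in columns $k\le d_{\min}^{\prime}$, where $\theta^{1}$ is an isomorphism for every $s$, or in positions of column $d_{\min}^{\prime}+1$ where both $E^{1}$-terms vanish).
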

\begin{proof}
We write $\rmin =\rmin (\Sigma)$
and
$d_{\rm min}^{\p}=\lfloor \frac{d_{\rm min}}{n}\rfloor$
as in the proof of Lemma \ref{lmm: Ed}.
Without loss of generality, we may assume that
$d_{\rm min}\geq n\geq 1$.
\par
Let us consider the homomorphism
$\theta_{k,s}^t:E^{t;\C}_{k,s}\to \ ^{\p}E^{t;\C}_{k,s}$
of truncated spectral sequences given in (\ref{equ: theta2}).
By using the commutative diagram (\ref{eq: diagram}) and the comparison theorem for spectral sequences, we see that 
it suffices to prove that the positive integer $d(D;\Sigma ,n,\C)$ 
has the 
following property:
\begin{enumerate}
\item[$(\dagger)$]
$\theta^{\infty}_{k,s}$
is  an isomorphism for all $(k,s)$ such that $s-k\leq d(D;\Sigma ,n,\C)$.
\end{enumerate}
By Lemma \ref{lmm: Ed}, 
we  can easily see that:
\begin{enumerate}
\item[$(\dagger)_1$]
if $k< 0$ or $k\geq \dmin^{\p} +1$,
$\theta^{\infty}_{k,s}$ is an isomorphism for all $(k,s)$ such that
$s-k\leq d(D;\Sigma ,n,\C)$.
\end{enumerate}
\par
Next,  assume that $0\leq k\leq \dmin^{\p}$, and investigate the condition that
$\theta^{\infty}_{k,s}$  is an isomorphism.
Note that the groups $E^{1;\C}_{k_1,s_1}$ and $^{\p}E^{1;\C}_{k_1,s_1}$ are not known for
$(u,v)\in\mathcal{S}_1=
\{(\dmin^{\p}+1,s)\in\Z^2:s\geq (2n\rmin -1)\dmin^{\p} \}$.
By considering the differentials $d^1$'s of
$E^{1;\C}_{k,s}$ and $^{\p}E^{1;\C}_{k,s}$,
and applying Lemma \ref{lmm: E2}, we see that
$\theta^2_{k,s}$ is an isomorphism if
$(k,s)\notin \mathcal{S}_1 \cup \mathcal{S}_2$, where
$$
\mathcal{S}_2=
\{(u,v)\in\Z^2:(u+1,v)\in \mathcal{S}_1\}
=\{(\dmin^{\p} ,v)\in \Z^2:v\geq (2n\rmin -1)\dmin^{\p}\}.
$$
A similar argument  shows that
$\theta^3_{k,s}$ is an isomorphism if
$(k,s)\notin \bigcup_{t=1}^3\mathcal{S}_t$, where
$\mathcal{S}_3=\{(u,v)\in\Z^2:(u+2,v+1)\in \mathcal{S}_1\cup
\mathcal{S}_2\}.$
Continuing in the same fashion,
considering the differentials
$d^t$'s on $E^{t;\C}_{k,s}$ and $^{\p}E^{t;\C}_{k,s}$
and applying the inductive hypothesis,
we  see that $\theta^{\infty}_{k,s}$ is an isomorphism
if $\dis (k,s)\notin \mathcal{S}:=\bigcup_{t\geq 1}\mathcal{S}_t
=\bigcup_{t\geq 1}A_t$,
where  $A_t$ denotes the set
$$
A_t=
\left\{
\begin{array}{c|l}
 &\mbox{ There are positive integers }l_1,\cdots ,l_t
\mbox{ such that},
\\
(u,v)\in \Z^2 &\  1\leq l_1<l_2<\cdots <l_t,\ 
u+\sum_{j=1}^tl_j=\dmin^{\p} +1,
\\
& \ v+\sum_{j=1}^t(l_j-1)\geq (2n\rmin -1)\dmin^{\p}
\end{array}
\right\}.
$$
Note that 
if this set was empty for every $t$, then, of course, the conclusion of 
Theorem \ref{thm: III} would hold in all dimensions (this is known to be false in general). 
If $\dis A_t\not= \emptyset$, it is easy to see that
\begin{align*}
a(t)&=
\min \{s-k:(k,s)\in A_t\}=
(2n\rmin -1)\dmin^{\p} -(\dmin^{\p} +1)+t
\\
&=
(2n\rmin -2)\dmin^{\p}+t-1
=d(D;\Sigma ,n,\C)+t+1.
\end{align*}
Hence, we obtain that
$\min \{a(t):t\geq 1,A_t\not=\emptyset\}=d(D;\Sigma ,n,\C)+2.$
 Since $\theta^{\infty}_{k,s}$ is an isomorphism
for any $(k,s)\notin \bigcup_{t\geq 1}A_t$ for each $0\leq k\leq \dmin^{\p}$,
we have the following:
\begin{enumerate}
\item[$(\dagger)_2$]
If $0\leq k\leq \dmin^{\p}$,
$\theta^{\infty}_{k,s}$ is  an isomorphism for any $(k,s)$ such that
$s-k\leq  d(D;\Sigma ,n,\C)+1.$
\end{enumerate}
Then, by $(\dagger)_1$ and $(\dagger)_2$, we know that
$\theta^{\infty;\C}_{k,s}:E^{\infty;\C}_{k,s}\stackrel{\cong}{\rightarrow}
\ ^{\p}E^{\infty;\C}_{k,s}$ 
is an isomorphism for any $(k,s)$
if $s-k\leq d(D;\Sigma ,n,\C)$. 
Hence, by $(\dagger)$  we have the desired
assertion and this completes the proof of Theorem \ref{thm: III}.
\end{proof}
\begin{crl}\label{crl: III*}
For each $\textbf{\textit{a}}\not= {\bf 0}_r\in (\Z_{\geq 0})^r$, 
the stabilization map
$$
s_{D,D+\textbf{\textit{a}}}:
\Q^{D,\Sigma}_n(\C)\to 
\Q^{D+\textbf{\textit{a}},\Sigma}_n(\C)
$$
is a homology equivalence through dimension
$d(D;\Sigma ,n,\C)$. 
\end{crl}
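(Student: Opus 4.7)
The plan is to deduce Corollary \ref{crl: III*} from Theorem \ref{thm: III} by induction on the total weight $|\textbf{\textit{a}}|=a_1+a_2+\cdots +a_r$ of the $r$-tuple $\textbf{\textit{a}}=(a_1,\ldots,a_r)$. The key observation is that equation (\ref{eq: stab-compo}) lets us factor any stabilization map as a composition of the elementary stabilization maps treated in Theorem \ref{thm: III}, so only a monotonicity argument for the stability dimension is needed.

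First, I would choose any sequence $\textbf{\textit{a}}=\textbf{\textit{e}}_{i_1}+\textbf{\textit{e}}_{i_2}+\cdots+\textbf{\textit{e}}_{i_\ell}$ where $\ell=|\textbf{\textit{a}}|$ (picking, say, $a_1$ copies of $\textbf{\textit{e}}_1$, then $a_2$ copies of $\textbf{\textit{e}}_2$, and so on). Setting $D_0=D$ and $D_j=D_{j-1}+\textbf{\textit{e}}_{i_j}$ for $1\le j\le \ell$, identity (\ref{eq: stab-compo}) gives, up to homotopy,
\begin{equation*}
s_{D,D+\textbf{\textit{a}}}\ \simeq\ s_{D_{\ell-1},D_\ell}\circ s_{D_{\ell-2},D_{\ell-1}}\circ\cdots\circ s_{D_0,D_1}.
\end{equation*}
Each factor is of the form $s_{D_{j-1},D_{j-1}+\textbf{\textit{e}}_{i_j}}$, which by Theorem \ref{thm: III} is a homology equivalence through dimension $d(D_{j-1};\Sigma,n,\C)$.

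Next, I would observe that the stability dimension is monotone along this chain. Since $D_j=D_{j-1}+\textbf{\textit{e}}_{i_j}$, only the $i_j$-th component is increased by one, so $d_{\rm min}(D_j)\ge d_{\rm min}(D_{j-1})$, whence
\begin{equation*}
d(D_{j-1};\Sigma,n,\C)=(2n\rmin(\Sigma)-2)\lfloor d_{\rm min}(D_{j-1})/n\rfloor -2\ \ge\ d(D;\Sigma,n,\C)
\end{equation*}
for every $j$. Therefore each elementary stabilization map in the composition is a homology equivalence through dimension at least $d(D;\Sigma,n,\C)$.

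Finally, I would appeal to the general fact that a composition of homology equivalences through dimension $N$ is itself a homology equivalence through dimension $N$ (immediate from the five-lemma applied to the long exact sequences of the mapping cones, or directly from the definition via $H_k$ for $k\le N$). Applying this to the factorization above yields the claim. I do not expect any serious obstacle here: the one subtlety is the monotonicity $d_{\rm min}(D_j)\ge d_{\rm min}(D_{j-1})$ under adding coordinate vectors, which is trivial since no component decreases. The whole argument is formal once Theorem \ref{thm: III} is in hand, and in particular works verbatim without any restriction on the weight $|\textbf{\textit{a}}|$.
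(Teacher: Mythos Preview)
Your proposal is correct and follows exactly the approach the paper intends: the paper's own proof simply states that the assertion ``easily follows from (\ref{eq: stab-compo}) and Theorem \ref{thm: III}'', and your write-up just unpacks this by factoring $s_{D,D+\textbf{\textit{a}}}$ into elementary steps and using the trivial monotonicity of $d_{\rm min}$ (hence of $d(D;\Sigma,n,\C)$) along the chain.
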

\begin{proof}
The assertion easily follows from
(\ref{eq: stab-compo}) and Theorem \ref{thm: III}.
\end{proof}
\paragraph{6.2 The case $\K=\R$.}
Next,  we shall consider the case $\K=\R$.
By using exactly the same approach as in 
Lemmas \ref{lemma: vector bundle*},
\ref{lemma: E11}, \ref{lmm: E1}, \ref{lmm: Ed}, \ref{lmm: E2},
Theorem \ref{thm: III}, and  Corollary \ref{crl: III*},
we can obtain the following result.
\begin{lmm}\label{lmm: the case K=R}
There is the following
 truncated spectral sequence
\begin{equation}\label{eq: spectral sequence K=R}
\big\{E^{t;\R}_{k,s}, d^{t}:E^{t;\R}_{k,s}\to 
E^{t;\R}_{k+t,s+t-1}
\big\}
\ \Rightarrow H_{s-k}(\Q^{D,\Sigma}_n(\R);\Z)
\end{equation}
satisfying the following conditions:
\begin{enumerate}
\item[$\I$]
If $k<0$ or $k\geq \lfloor \frac{\dmin}{n}\rfloor +2$,
$E^{1;\R}_{k,s}=0$ for any $s$.
\item[$\II$]
$E^{1;\R}_{0,0}=\Z$ and $E^{1;\R}_{0,s}=0$ if $s\not= 0$.
\item[$\III$]
If $1\leq k\leq \lfloor \frac{d_{\rm min}}{n}\rfloor$, there is a natural  
isomorphism
$$
E^{1;\R}_{k,s}\cong H^{nrk-s}_c(C_{k;\Sigma,\R};\pm \Z).
$$
\item[$\IV$]
If $1\leq k\leq \lfloor \frac{d_{\rm min}}{n}\rfloor$, 
$E^{1;\R}_{k,s}=0$ for any 
$s\leq (n\rmin (\Sigma)-1)k-1.$
\item[$\V$]	
If $k=\lfloor \frac{d_{\rm min}}{n}\rfloor +1$,
$E^{1;\R}_{k,s} =0$ 
for any 
$s\leq (n\rmin (\Sigma)-1)\lfloor \frac{d_{\rm min}}{n}\rfloor-1$.
\qed
\end{enumerate}
\end{lmm}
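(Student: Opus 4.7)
The plan is to repeat the construction of Section 4 (and the truncation procedure of Section 6.1) verbatim with $\C$ replaced by $\R$ throughout, and to keep careful track of how the parameter $d(\K)=\dim_{\R}\K$ enters each dimensional count. Concretely, I would first set up the real analogues of Definitions 4.7 and 4.11: let $\Sigma_D^{\R}=\P_D^{\R}\setminus \Q^{D,\Sigma}_n(\R)$, let $Z_D^{\R}\subset \Sigma_D^{\R}\times \R$ be its tautological normalization (pairs $(\mathrm{f},x)$ with $F_{(n)}(\mathrm{f})(x)\in L_n^{\KS}(\R)$), and let $\mathcal{X}^{D,\R}$ be the non-degenerate simplicial resolution of the projection $\pi_D^{\R}:Z_D^{\R}\to \Sigma_D^{\R}$ with its canonical increasing filtration $\{\mathcal{X}^{D,\R}_k\}$. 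Applying \cite[Lemma 1]{Va} gives a homology equivalence $\pi_{D+}^{\R,\Delta}:\mathcal{X}^{D,\R}_+\simeq (\Sigma_D^{\R})_+$, and Alexander duality in $\P_D^{\R}\cong \R^{N(D)}$ yields $\tilde{H}_k(\Q^{D,\Sigma}_n(\R);\Z)\cong H_c^{N(D)-k-1}(\Sigma_D^{\R};\Z)$. Reindexing the resulting filtration spectral sequence produces the Vassiliev spectral sequence converging to $H_{s-k}(\Q^{D,\Sigma}_n(\R);\Z)$ with $E^1_{k,s}=H_c^{N(D)+k-s-1}(\mathcal{X}^{D,\R}_k\setminus \mathcal{X}^{D,\R}_{k-1};\Z)$.

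Next I would prove the real analogue of Lemma~\ref{lemma: vector bundle*}: for $1\leq k\leq \lfloor d_{\min}/n\rfloor$, the stratum $\mathcal{X}^{D,\R}_k\setminus \mathcal{X}^{D,\R}_{k-1}$ is the total space of a real affine bundle over $C_{k;\Sigma,\R}$ of rank $l^{\R}_{D,k,n}=N(D)-nrk+k-1$. The proof is the same as in \cite[Lemma 4.9]{KY12} with the only change being that each coincidence condition now imposes $n$ real linear constraints (rather than $2n$ real constraints as in the complex case). Combined with the Thom isomorphism and the formula $(N(D)+k-s-1)-l^{\R}_{D,k,n}=nrk-s$, this gives the isomorphism $E^1_{k,s}\cong H_c^{nrk-s}(C_{k;\Sigma,\R};\pm\Z)$ of part (iii).

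Then I would perform the truncation exactly as in Section 6.1: define the truncated resolution $X^{\Delta,\R}$ (and write $E^{t;\R}_{k,s}$ for the associated spectral sequence) by collapsing every filtration stage beyond the $\lfloor d_{\min}/n\rfloor$-th, invoking \cite[\S 2--3]{Mo3} to identify the resulting $E^1$-terms with those computed above in the range $k\leq \lfloor d_{\min}/n\rfloor$, and inheriting the vanishing $E^{1;\R}_{k,s}=0$ for $k<0$ or $k\geq \lfloor d_{\min}/n\rfloor +2$ together with the trivial $E^{1;\R}_{0,*}$ from the empty stratum and the base point. This gives (i), (ii), (iii).

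The remaining parts (iv) and (v) are then purely dimensional. For (iv), using $\dim C_{k;\Sigma,\R}=k+kn(r-\rmin(\Sigma))$ from (\ref{eq: dim CSigma}), the cohomology $H_c^{nrk-s}(C_{k;\Sigma,\R};\pm\Z)$ vanishes whenever $nrk-s>\dim C_{k;\Sigma,\R}$, i.e.\ whenever $s\leq (n\rmin(\Sigma)-1)k-1$. For (v), I would copy the dimension computation at the end of the proof of Lemma~\ref{lmm: Ed}: $\dim(X^{\Delta,\R}_{\lfloor d_{\min}/n\rfloor+1}\setminus X^{\Delta,\R}_{\lfloor d_{\min}/n\rfloor})=l^{\R}_{D,\lfloor d_{\min}/n\rfloor,n}+\dim C_{\lfloor d_{\min}/n\rfloor;\Sigma,\R}+1$, and compare with the cohomological degree $N(D)+\lfloor d_{\min}/n\rfloor-s$ to obtain the stated bound. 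No step poses a genuine obstacle since each is a direct transcription of its complex counterpart with $d(\C)=2$ replaced by $d(\R)=1$; the main point to watch is consistently substituting $\dim L_n^{\KS}(\R)=n(r-\rmin(\Sigma))$ (rather than $2n(r-\rmin(\Sigma))$) throughout every dimensional count, which is precisely the source of the factor $n\rmin(\Sigma)$ in (iv) and (v) in place of $2n\rmin(\Sigma)$.
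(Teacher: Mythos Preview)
Your proposal is correct and follows exactly the approach the paper intends: the paper's ``proof'' of this lemma consists entirely of the sentence ``By using exactly the same approach as in Lemmas \ref{lemma: vector bundle*}, \ref{lemma: E11}, \ref{lmm: E1}, \ref{lmm: Ed}, \ref{lmm: E2}, Theorem \ref{thm: III}, and Corollary \ref{crl: III*}, we can obtain the following result,'' and what you have written is precisely that transcription with $d(\C)=2$ replaced by $d(\R)=1$ in every dimension count. Your computations of the affine-bundle rank $l^{\R}_{D,k,n}=N(D)-nrk+k-1$, the Thom-shifted degree $nrk-s$, and the vanishing ranges in (iv) and (v) all check out against the complex versions in Lemmas \ref{lemma: E11} and \ref{lmm: Ed}.
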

\begin{thm}\label{thm: III (KY16)}
For each $\textbf{\textit{a}}\not= {\bf 0}_r\in (\Z_{\geq 0})^r$,
the stabilization map
$$
s_{D,D+\textbf{\textit{a}}}^{\R}:
\Q^{D,\Sigma}_n(\R)\to 
\Q^{D+\textbf{\textit{a}},\Sigma}_n(\R)
$$
is a homology equivalence through dimension
$d(D;\Sigma ,n,\R)$, where
$d(D;\Sigma ,n,\R)$ denotes the integer given by (\ref{eq: dDSigma}). 
\end{thm}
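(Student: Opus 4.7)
The plan is to imitate the argument used to prove Theorem \ref{thm: III} and Corollary \ref{crl: III*}, replacing $\C$ by $\R$ throughout and tracking the resulting numerical shifts. First, by (\ref{eq: stab-compo}), it suffices to prove the statement for $\textbf{\textit{a}}=\textbf{\textit{e}}_i$ with $1\leq i\leq r$, since an arbitrary $\textbf{\textit{a}}\not={\bf 0}_r$ can be obtained by composing such elementary stabilizations and the target dimension $d(D;\Sigma,n,\R)$ is clearly monotone in $D$.

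Next, I would extend $s_{D,D+\textbf{\textit{e}}_i}^{\R}$ to an open embedding $\R\times \Q^{D,\Sigma}_n(\R)\hookrightarrow \Q^{D+\textbf{\textit{e}}_i,\Sigma}_n(\R)$ (the real analogue of (\ref{equ: sssd})), and correspondingly an open embedding $\tilde{s}_{D,i}^{\R}:\R\times \Sigma_D^{\R}\to \Sigma_{D+\textbf{\textit{e}}_i}^{\R}$ between the real discriminants. Since $\P_D^{\R}\cong \R^{N(D)}$, Alexander duality now gives
\begin{equation*}
\tilde{H}_k(\Q^{D,\Sigma}_n(\R);\Z)\cong H_c^{N(D)-k-1}(\Sigma_D^{\R};\Z),
\end{equation*}
and we get a commutative square analogous to (\ref{eq: diagram}) in which $(s_{D,D+\textbf{\textit{e}}_i}^{\R})_*$ corresponds to a suspension-composed-with-pullback map between compactly supported cohomologies of the real discriminants, with suspension shift equal to $1$ (not $2$).

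Now the universality of the non-degenerate simplicial resolution (\cite{Mo2}) produces a filtration-preserving extension of $\tilde{s}_{D,i}^{\R}$ to the real truncated simplicial resolutions, inducing a homomorphism of truncated spectral sequences $\theta^{t;\R}_{k,s}:E^{t;\R}_{k,s}\to {}^{\p}E^{t;\R}_{k,s}$ of the type constructed in Lemma \ref{lmm: the case K=R}, both abutting to the homology of the appropriate $\Q_n^{\bullet,\Sigma}(\R)$. The real analogue of Lemma \ref{lemma: vector bundle*} gives an affine bundle over $C_{k;\Sigma,\R}$ of rank $N(D)-nrk+k-1$, whence the real analogue of Lemma \ref{lemma: E11} holds: on each strip $1\leq k\leq \lfloor d_{\rm min}/n\rfloor$ the $E^1$-term is $H_c^{nrk-s}(C_{k;\Sigma,\R};\pm\Z)$, and the real analogue of Lemma \ref{lmm: E1} (hence of Lemma \ref{lmm: E2}) shows that $\theta^{1;\R}_{k,s}$ is an isomorphism in that range for all $s$, because on the level of the simplicial resolutions below filtration $\lfloor d_{\rm min}/n\rfloor+1$ the stabilization is compatible with the bundle structure, and the bundle ranks differ by exactly the suspension shift $1$.

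Finally, I would run the comparison-theorem argument of Theorem \ref{thm: III} verbatim, using parts (iv) and (v) of Lemma \ref{lmm: the case K=R} to control the unknown terms at filtration $\lfloor d_{\rm min}/n\rfloor+1$. The forbidden set $\mathcal{S}=\bigcup_{t\geq 1}A_t$ is defined by the same combinatorics, with $(2n\rmin-1)$ everywhere replaced by $(n\rmin-1)$, giving
\begin{equation*}
a(t)=(n\rmin-1)\lfloor d_{\rm min}/n\rfloor-(\lfloor d_{\rm min}/n\rfloor+1)+t=(n\rmin-2)\lfloor d_{\rm min}/n\rfloor+t-1,
\end{equation*}
so that $\min_t a(t)=d(D;\Sigma,n,\R)+2$, yielding an isomorphism on $E^{\infty;\R}_{k,s}$ whenever $s-k\leq d(D;\Sigma,n,\R)$. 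The main obstacle is verifying the real versions of the vector-bundle description of $\mathcal{X}_k^D\setminus \mathcal{X}_{k-1}^D$ and of the filtration-preserving stabilization at the spectral-sequence level, since here the fibers of the resolution use real common roots rather than complex ones, and one must be careful that the relevant configuration spaces $C_{k;\Sigma,\R}$ and the twisted coefficient system $\pm\Z$ (from the Thom isomorphism) behave functorially under the stabilization; however, as in the complex case, these are essentially formal once the affine-bundle structure is in place.
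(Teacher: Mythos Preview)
Your proposal is correct and follows essentially the same approach as the paper: the paper's own proof simply states that the assertion is proved using the real spectral sequence of Lemma \ref{lmm: the case K=R} in exactly the same way as in the complex case (Theorem \ref{thm: III}), and omits the details. Your outline fills in precisely those details, with the correct numerical shifts ($2n\rmin\to n\rmin$, suspension shift $2\to 1$) and the correct computation $\min_t a(t)=d(D;\Sigma,n,\R)+2$.
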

\begin{proof}
This assertion can be proved by using  the  spectral sequence
(\ref{eq: spectral sequence K=R})
in exactly the same way
as in the case of $\Q^{D,\Sigma}_n(\C)$, so we omit the details.
\end{proof}
\begin{crl}\label{crl: III* (KY16)}
For each $\textbf{\textit{a}}\not= {\bf 0}_r\in (\Z_{\geq 0})^r$,
the stabilization map
$$
s_{D,D+\textbf{\textit{a}}}:
\Q^{D,\Sigma}_n(\C)\to 
\Q^{D+\textbf{\textit{a}},\Sigma}_n(\C)
$$
is a $\Z_2$-equivariant homology equivalence through dimension
$d(D;\Sigma ,n,\R).$
\end{crl}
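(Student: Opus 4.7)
The plan is to reduce the claim to a direct combination of the two homology stability results already established in Section 6: Corollary \ref{crl: III*} (the case $\K=\C$) and Theorem \ref{thm: III (KY16)} (the case $\K=\R$). Since the group $G=\Z_2$ has only two subgroups, namely the trivial subgroup $\{1\}$ and $\Z_2$ itself, the definition of $\Z_2$-equivariant homology equivalence through dimension $N$ boils down to verifying two conditions, one for each subgroup.

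For the trivial subgroup, the restriction of $s_{D,D+\textbf{\textit{a}}}$ to fixed points is the map itself, and Corollary \ref{crl: III*} asserts that this is a homology equivalence through dimension $d(D;\Sigma,n,\C)$. For the full group $\Z_2$, we combine the identification $\Q^{D,\Sigma}_n(\R)=\Q^{D,\Sigma}_n(\C)^{\Z_2}$ from (\ref{eq: 2.23}) with the compatibility equality (\ref{eq: stabZ2}) of Remark \ref{rmk: equiv-remark}, namely $s_{D,D+\textbf{\textit{a}}}^{\R}=(s_{D,D+\textbf{\textit{a}}})^{\Z_2}$. The restriction to $\Z_2$-fixed points is therefore precisely $s_{D,D+\textbf{\textit{a}}}^{\R}$, which by Theorem \ref{thm: III (KY16)} is a homology equivalence through dimension $d(D;\Sigma,n,\R)$.

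The last point to check is the straightforward inequality $d(D;\Sigma,n,\C) \geq d(D;\Sigma,n,\R)$, which is immediate from the formulas in (\ref{eq: number d(D)}): since $2n\rmin(\Sigma)-2 \geq n\rmin(\Sigma)-2$ (because $n\rmin(\Sigma) \geq 0$) and $\lfloor d_{\min}/n\rfloor \geq 1$ by our standing assumptions, the complex-case bound dominates the real-case one. Consequently, $s_{D,D+\textbf{\textit{a}}}$ is also a homology equivalence through the smaller dimension $d(D;\Sigma,n,\R)$. Both fixed-point restrictions being homology equivalences through that common dimension, the map $s_{D,D+\textbf{\textit{a}}}$ is a $\Z_2$-equivariant homology equivalence through dimension $d(D;\Sigma,n,\R)$, as claimed. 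There is no genuine obstacle here: all the substantive work has already been absorbed into the two stability theorems and into the verification of (\ref{eq: stabZ2}), so the corollary is purely a bookkeeping consequence of assembling these three ingredients.
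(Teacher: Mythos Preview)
Your proof is correct and follows essentially the same approach as the paper: both combine Corollary~\ref{crl: III*}, Theorem~\ref{thm: III (KY16)}, the identification $s_{D,D+\textbf{\textit{a}}}^{\R}=(s_{D,D+\textbf{\textit{a}}})^{\Z_2}$ from (\ref{eq: stabZ2}), and the inequality $d(D;\Sigma,n,\R)\leq d(D;\Sigma,n,\C)$. Your write-up simply makes explicit the two-subgroup verification that the paper leaves implicit.
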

\begin{proof}
Since $d(D;\Sigma ,n,\R)<d(D;\Sigma ,n,\C)$,
the assertion follows from
(\ref{eq: stabZ2}),
Corollary \ref{crl: III*}
and Theorem \ref{thm: III (KY16)}.
\end{proof}
\section{Connectivity}\label{section: connectivity}

\begin{lmm}\label{lmm: pi1=abelian}
$\I$ 
The space
$\Q^{D,\Sigma}_n(\C)$ is simply connected. 
\par
$\II$
The space $\Q^{D,\Sigma}_n(\R)$  is simply connected
if $(n,\rmin(\Sigma))\not=(1,2)$.
%
%
\end{lmm}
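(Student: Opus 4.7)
The plan is to identify $\Q^{D,\Sigma}_n(\K)$ with the complement $\P_D^{\K} \setminus \Sigma_D$ in the affine space $\P_D^{\K} \cong \K^{N(D)}$, give a lower bound on the real codimension of $\Sigma_D$, and then conclude by a general-position argument.

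First, I would stratify $\Sigma_D$ according to primitive collections: for each $\sigma \in Pr(\Sigma)$ and each $x \in \R$, let $V_{\sigma,x} \subset \P_D^{\K}$ be the affine subspace of all $(f_1,\ldots,f_r)$ such that $f_i(x) = f_i'(x) = \cdots = f_i^{(n-1)}(x) = 0$ for every $i \in \sigma$. Each condition is a single $\K$-linear equation on the coefficients of the corresponding $f_i$, so $V_{\sigma,x}$ has real codimension $d(\K)\, n\, |\sigma|$ in $\P_D^{\K}$. Since $\Sigma_D = \bigcup_{\sigma \in Pr(\Sigma)}\bigcup_{x \in \R} V_{\sigma,x}$ and the union over $x \in \R$ costs at most one real dimension, taking the minimum $|\sigma| = \rmin(\Sigma)$ yields
\[
\operatorname{codim}_{\R}(\Sigma_D) \;\geq\; d(\K)\, n\, \rmin(\Sigma) - 1.
\]

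Second, I would invoke transversality: any continuous loop $\gamma: S^1 \to \Q^{D,\Sigma}_n(\K)$ extends to a map $\tilde{\gamma}: D^2 \to \P_D^{\K}$, and whenever $\operatorname{codim}_\R(\Sigma_D) \geq 3$, a small generic perturbation of $\tilde{\gamma}$ yields a null-homotopy of $\gamma$ inside $\Q^{D,\Sigma}_n(\K)$. For $\K = \C$ the codimension bound gives $2n\rmin(\Sigma) - 1 \geq 3$, which holds automatically since $\rmin(\Sigma) \geq 2$ by \eqref{eq: rmin >1} and $n \geq 1$; this establishes (I). For $\K = \R$ the bound gives $n\rmin(\Sigma) - 1 \geq 3$, which holds whenever $n\rmin(\Sigma) \geq 4$, in particular for $n \geq 2$ or $\rmin(\Sigma) \geq 4$, and gives (II) in those ranges.

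The main obstacle is the borderline case $\K = \R$ with $(n, \rmin(\Sigma)) = (1, 3)$, where $\operatorname{codim}_\R \Sigma_D = 2$ and transversality alone does not suffice. Here I expect a finer argument: namely, observe first that $\pi_1(\Q^{D,\Sigma}_n(\R))$ is generated by meridian loops around the codimension-$2$ strata $\bigcup_{x\in\R} V_{\sigma,x}$ and that these meridians commute, so $\pi_1$ is abelian; then by Hurewicz it coincides with $H_1$, which one shows vanishes via the truncated Vassiliev spectral sequence of Lemma \ref{lmm: the case K=R}. An alternative is to contract each meridian explicitly by sliding the parameter $x$ to infinity, using that each stratum $\bigcup_{x \in \R} V_{\sigma,x}$ is non-compact in the $x$-direction. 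The excluded case $(n,\rmin(\Sigma))=(1,2)$ is genuinely different: there the codimension drops to $1$, so $\Sigma_D$ locally separates $\P_D^{\R}$ and the complement is not even path-connected.
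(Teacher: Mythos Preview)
Your codimension/transversality argument is genuinely different from the paper's braid-theoretic one, and it is correct whenever $d(\K)\,n\,\rmin(\Sigma)-1\geq 3$: that covers all of~(i) and all of~(ii) except the single case $(n,\rmin(\Sigma))=(1,3)$. (The paper instead represents elements of $\pi_1$ by $r$-coloured strings and argues that any two colours can slide past each other; this has exactly the same range of validity.)

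In the remaining case $\K=\R$, $(n,\rmin(\Sigma))=(1,3)$, neither of your proposed repairs can work, because the \emph{statement itself is false}. Take $\Sigma$ to be the fan of $\CP^2$, so $r=3$, $\KS=\partial\Delta^2$, $I(\KS)=\{\{1,2,3\}\}$ and $\rmin(\Sigma)=3$; take $n=1$ and $D=(1,1,1)$. Writing $f_i(z)=z-a_i$ one finds
\[
\Q^{D,\Sigma}_1(\R)=\{(a_1,a_2,a_3)\in\R^3:\text{not }a_1=a_2=a_3\}=\R^3\setminus\{\text{diagonal line}\}\simeq S^1,
\]
so $\pi_1=\Z$. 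Your spectral-sequence route cannot give $H_1=0$: here $L_1^{\KS}(\R)=\{0\}$, hence $C_{1;\Sigma,\R}\cong\R$ and $E^{1;\R}_{1,2}\cong H^1_c(\R;\Z)=\Z$; for $d_{\min}\geq 2$ every differential in or out of this position vanishes (the targets $E^{1;\R}_{1+t,t+1}$ lie above the dimension of $C_{1+t;\Sigma,\R}$), so $H_1\neq 0$ persists for all larger $D$ as well. The meridian-sliding idea fails for the same reason: the meridian about the diagonal is precisely this nonzero class. The paper's braid argument shares this defect --- the freedom to pass any two coloured strands through each other yields path-connectedness but says nothing about a small loop encircling the codimension-$2$ locus. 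Consistently, $\mathcal{Z}_{\partial\Delta^2}(D^1,S^0)\cong\partial(I^3)\cong S^2$, so the target $\Omega\mathcal{Z}_{\KS}(D^1,S^0)\simeq\Omega S^2$ of the map $j_{D,1,\R}$ in Theorem~\ref{thm: I R} also has $\pi_1=\Z$.
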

\begin{proof}
Note that an element of $\pi_1(\Q^{D,\Sigma}_n(\K))$ can be
represented by an $r$-tuple
$(\eta_1,\cdots ,\eta_r)$ of strings of $r$-different colors
where each $\eta_k$ $(1\leq k\leq r)$
has total multiplicity $d_k$, as in the case of strings representing elements of  the classical braid
group $\mbox{Br}_d=\pi_1(C_d(\C))$ \cite{Hansen}.
However, in our case an $r$-tuple $(\eta_1,\cdots ,\eta_r)$ of
strings
 of $r$-different colors can move continuously representing the same element of the fundamental group,\footnote{%
Let $f(z)\in \R[z]$ be a real coefficient polynomial and $\alpha\in \C\setminus \R$ be a
complex root of $f(z)$ of multiplicity $n_{\alpha}.$
Then $f(z)$ has the root $\overline{\alpha}$ of the same multiplicity $n_{\alpha}$. 
Thus, for the case $\K=\R$,
each string $\eta_k$ moves symmetrically  along the real axis $\R$.
}
 as long as the following situation 
$(*)_{\sigma}$ does not occur for each $\sigma =\{i_1,\cdots ,i_s\}\in I(\KS)$:
\begin{enumerate}
\item[$(*)_{\sigma}$]
The strings $\{\eta_{i}\}_{i\in \sigma}$  of $s$-different colors 
with multiplicity $\geq n$
pass through a single point of the real line $\R$.
\end{enumerate}
\par
(i) In  the case $\K=\C$, 
we can continuously deform the strings $(\eta_1,\cdots ,\eta_r)$ and, if necessary, make them pass through one another 
  in  $\C\setminus \R$, 
  so that any collection of strings can be continuously deformed to a trivial one. 
Thus $\pi_1(\Q^{D,\Sigma}_n(\C))$  is trivial. 
\par
(ii)
Let $\K=\R$.
If $n\geq 2$, 
a similar argument as above shows that the fundamental group must be trivial, since any string of multiplicity $\geq n$ can be split into stings of multiplicity  less than $n$
(by the continuous deformation).
Thus, the space $\Q^{D,\Sigma}_n(\R)$ is path-connected and simply connected if $n\geq 2$.
\par
Next, consider the case $n=1$ with $\rmin(\Sigma)\geq 3$. 
Then the space $\Q^{D,\Sigma}_1(\R)$ is path-connected and  it is  simply connected. To see this, 
let $\sigma\in I(\KS)$  and  $\{i,j\}\subset \sigma$. Since $\mbox{card}(\sigma)\geq \rmin (\Sigma)\geq 3$
(by (\ref{eq: rmin min=mini sigma})), there is some number $k\in \sigma$ such that
$k\notin \{i,j\}$.  But this means that the $i$-th braid and the $j$-th braid can pass through one another on the real line, as long as they both don't pass through the $k$-th braid at the same time. 
By using this fact, we see that
any collection of strings can be continuously deformed to a trivial one.
Thus, $\Q^{D,\Sigma}_1(\R)$ is path connected and that 
$\pi_1(\Q^{D,\Sigma}_1(\R))$ is trivial. 
Since $n\geq 2$ or  $n=1$ with $\rmin(\Sigma)\geq 3$
$\Leftrightarrow$ $(n,\rmin(\Sigma))\not=(1,2)$,
the assertion (ii) follows.
\end{proof}
\begin{rmk}\label{rmk: path-connected}
{\rm
The space
$\Q^{D,\Sigma}_n(\R)$ 
is not path-connected if $(n,\rmin(\Sigma))=(1,2)$.
But its each path-component is simply connected.
\par
To see this,
suppose that 
$(n,\rmin(\Sigma))=(1,2)$.
Since $\rmin (\Sigma)=2$,
there has to exist  $\sigma\in I(\KS)$  such that   $\sigma =\{i,j\}$
(by (\ref{eq: rmin min=mini sigma})).
Since $n=1$,
 this means that particles on the real line corresponding to the $i$-th and the $j$-th polynomial cannot cross one another on the real line (i.e. the $i$-th and the $j$-th  polynomials cannot have common real roots).
 Thus, $\Q^{D,\Sigma}_1(\R)$ 
is not path-connected.
 However, since
there are no restrictions on the movement of roots (particles) within a connected component, each path-component is simply connected.
\qed
}
\end{rmk}

\begin{crl}\label{crl: 1-connected}
$\I$
If the condition $($\ref{1.4}$)^*$ holds,
$\Q^{D,\Sigma}_n(\C)$ is simply connected.
\par
$\II$
If the condition $($\ref{1.4}$)^{\dagger}$ holds,
$\Q^{D,\Sigma}_n(\R)$ is simply connected.
\end{crl}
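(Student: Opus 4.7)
The plan is to observe that Corollary \ref{crl: 1-connected} is an immediate translation of Lemma \ref{lmm: pi1=abelian} into the packaging of the conditions $(\ref{1.4})^*$ and $(\ref{1.4})^{\dagger}$ that control the main theorems. No further geometric work about the braid-like deformations of roots is needed beyond what was already carried out in the proof of Lemma \ref{lmm: pi1=abelian}; the corollary simply asserts that the hypotheses $(\ref{1.4})^*$ and $(\ref{1.4})^{\dagger}$ imply the hypotheses of Lemma \ref{lmm: pi1=abelian}.

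For part $\I$, I would simply note that Lemma \ref{lmm: pi1=abelian}$\I$ asserts the simple connectivity of $\Q^{D,\Sigma}_n(\C)$ with no standing hypothesis on $D$, $n$ or $\Sigma$ beyond the global setup of Section \ref{section: main results}. In particular, whenever $(\ref{1.4})^*$ holds, the hypothesis of Lemma \ref{lmm: pi1=abelian}$\I$ is satisfied and the space $\Q^{D,\Sigma}_n(\C)$ is simply connected. The only worthwhile remark is that $(\ref{1.4})^*$ guarantees $\lfloor d_{\min}/n\rfloor\geq 1$, which ensures that the stability range $d(D;\Sigma,n,\C)$ and the setting in which the corollary will later be invoked are non-vacuous.

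For part $\II$, I would unpack the definition of $(\ref{1.4})^{\dagger}$: it is precisely $d_{\min}\geq n\geq 1$ together with $(n,\rmin(\Sigma))\neq (1,2)$. The second clause is identical to the hypothesis of Lemma \ref{lmm: pi1=abelian}$\II$, so that lemma gives the simple connectivity of $\Q^{D,\Sigma}_n(\R)$ at once. Here it is useful to recall Remark \ref{rmk: path-connected}: when $(n,\rmin(\Sigma))=(1,2)$, the space $\Q^{D,\Sigma}_n(\R)$ fails to be path-connected, which shows that the extra condition in $(\ref{1.4})^{\dagger}$ (over $(\ref{1.4})^*$) is not only sufficient but also necessary for the conclusion.

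Thus the only nontrivial step is the one already accomplished in Lemma \ref{lmm: pi1=abelian}, namely the string-deformation argument showing that in the complex case any element of $\pi_1$ can be unwound through $\C\setminus\R$, and that in the real case with $(n,\rmin(\Sigma))\neq (1,2)$ one can always use either a multiplicity split (when $n\geq 2$) or a third available index outside a pair $\{i,j\}$ inside some $\sigma\in I(\KS)$ (when $n=1$ and $\rmin(\Sigma)\geq 3$) to move any two colored strings past each other on $\R$. Once these observations are in place, the corollary follows formally with no further computation.
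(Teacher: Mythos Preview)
Your proposal is correct and takes essentially the same approach as the paper: the paper's proof is the single line ``The assertions follow from Lemma \ref{lmm: pi1=abelian},'' and your write-up simply unpacks why the hypotheses $(\ref{1.4})^*$ and $(\ref{1.4})^{\dagger}$ place us within the scope of that lemma. The additional remarks you make about non-vacuity and about the necessity of excluding $(n,\rmin(\Sigma))=(1,2)$ are accurate but go beyond what the paper records.
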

\begin{proof}
The assertions follow from  Lemma \ref{lmm: pi1=abelian}.
\end{proof}

\begin{lmm}\label{lmm: vanishing line of E1}
\begin{enumerate}
\item[$\I$]
If $k<0$, or $k\geq \lfloor \frac{d_{\rm min}}{n}\rfloor +2$, or
$k=0$ and $s\not= 0$,  $E^{1;\K}_{k,s}=0$.
\item[$\II$]
If $1\leq k\leq \lfloor \frac{d_{\rm min}}{n}\rfloor$ and
$s-k\leq (d(\K)n\rmin (\Sigma)-2)k -1,$
$E^{1;\K}_{k,s}=0$.
\item[$\III$]
If $k=\lfloor \frac{d_{\rm min}}{n}\rfloor+1$ and
$s-k\leq (d(\K)n\rmin (\Sigma)-2)\lfloor \dmin/n\rfloor -2$,
 $E^{1;\K}_{k,s}=0$.
\end{enumerate}
\end{lmm}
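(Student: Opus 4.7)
The plan is to observe that this lemma is essentially a bookkeeping reformulation of Lemma \ref{lmm: Ed} (for $\K=\C$) and Lemma \ref{lmm: the case K=R} (for $\K=\R$), rewriting the vanishing conditions in terms of the total degree $s-k$ (which corresponds to the homological dimension via the convergence $E^{t;\K}_{k,s}\Rightarrow H_{s-k}(\Q^{D,\Sigma}_n(\K);\Z)$) rather than in terms of $s$ alone. Since neither new input from the simplicial resolution nor any new cohomological computation is required, the proof reduces to elementary arithmetic on the inequalities already established.

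For part (i), the combined cases ($k<0$, $k\geq \lfloor d_{\min}/n\rfloor +2$, and $k=0$ with $s\neq 0$) are exactly conditions (i) and (ii) of Lemmas \ref{lmm: Ed} and \ref{lmm: the case K=R}, so there is nothing to do. For part (ii), when $1\leq k\leq \lfloor d_{\min}/n\rfloor$, the condition $s-k\leq (d(\K)nr_{\min}(\Sigma)-2)k-1$ rearranges to
\begin{equation*}
s\leq (d(\K)nr_{\min}(\Sigma)-2)k-1+k=(d(\K)nr_{\min}(\Sigma)-1)k-1,
\end{equation*}
which is exactly the hypothesis of statement (iv) in Lemma \ref{lmm: Ed} (when $d(\K)=2$) and Lemma \ref{lmm: the case K=R} (when $d(\K)=1$). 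Analogously, for part (iii) with $k=\lfloor d_{\min}/n\rfloor+1$, the condition $s-k\leq (d(\K)nr_{\min}(\Sigma)-2)\lfloor d_{\min}/n\rfloor-2$ rearranges to
\begin{equation*}
s\leq (d(\K)nr_{\min}(\Sigma)-2)\lfloor d_{\min}/n\rfloor -2+\lfloor d_{\min}/n\rfloor+1=(d(\K)nr_{\min}(\Sigma)-1)\lfloor d_{\min}/n\rfloor -1,
\end{equation*}
which is exactly statement (v) in both lemmas.

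I do not expect any genuine obstacle here: the only thing to be careful about is to handle the two values $d(\K)\in\{1,2\}$ uniformly (so that the single statement of Lemma \ref{lmm: vanishing line of E1} subsumes both cases), and to notice the shift by $k=\lfloor d_{\min}/n\rfloor+1$ in part (iii) which produces the extra ``$-2$'' in the bound as opposed to the ``$-1$'' in part (ii). Once these are verified, the lemma follows immediately, and it is then in the convenient form (vanishing lines in the $(k,\,s-k)$-plane) that will be used in the subsequent comparison-of-spectral-sequences arguments for homology stability.
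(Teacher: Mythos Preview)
Your proposal is correct and takes exactly the same approach as the paper: the paper's proof is the single sentence ``The assertions follow from Lemmas \ref{lmm: Ed} and \ref{lmm: the case K=R}.'' Your version simply makes explicit the elementary arithmetic that converts the vanishing conditions on $s$ into conditions on $s-k$, which is indeed all that is needed.
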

\begin{proof}
The assertions  follow from Lemmas \ref{lmm: Ed} and \ref{eq: spectral sequence K=R}.
\end{proof}

\begin{lmm}\label{lmm: vanishing line of homology}
$\I$
If  $\lfloor \frac{\dmin}{n}\rfloor \geq 2$,
$$
\tilde{H}_i(\Q^{D,\Sigma}_n(\K);\Z)=0
\quad
\mbox{ for any } i\leq d(\K)n\rmin (\Sigma) -3.
$$
\par
$\II$
If $\lfloor \frac{\dmin}{n}\rfloor =1$,
$$
\tilde{H}_i(\Q^{D,\Sigma}_n(\K);\Z)=0
\quad
\mbox{ for any }i\leq d(\K)n\rmin (\Sigma) -4.
$$
\end{lmm}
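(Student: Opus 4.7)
The plan is to read off the vanishing range directly from the $E^{1}$-page of the truncated Vassiliev spectral sequence, using the bounds already compiled in Lemma \ref{lmm: vanishing line of E1}. Since the truncated spectral sequences of \S\ref{section: homology stability} converge
$$
\big\{E^{t;\K}_{k,s},\ d^t\big\} \Rightarrow H_{s-k}(\Q^{D,\Sigma}_n(\K);\Z),
$$
and since every $E^{\infty;\K}_{k,s}$ is a subquotient of $E^{1;\K}_{k,s}$, it suffices to check that $E^{1;\K}_{k,s}=0$ for every $(k,s)\neq (0,0)$ with $s-k$ in the claimed range. Write $\rmin=\rmin(\Sigma)$, $d^*=\lfloor \dmin/n\rfloor$ and $c=d(\K)n\rmin$ throughout.

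\textbf{Case (i): $d^*\geq 2$.} I would split the analysis by the value of $k$. For $k<0$, $k=0$ with $s\neq 0$, or $k\geq d^*+2$, part (i) of Lemma \ref{lmm: vanishing line of E1} gives the vanishing immediately. For $1\leq k\leq d^*$, part (ii) of the same lemma says $E^{1;\K}_{k,s}=0$ whenever $s-k\leq (c-2)k-1$; the worst (smallest) bound is attained at $k=1$ and equals $c-3$, so every $(k,s)$ with $s-k\leq c-3$ falls into the vanishing region. For the truncation term $k=d^*+1$, part (iii) of the lemma gives vanishing for $s-k\leq (c-2)d^*-2$, and since $d^*\geq 2$ this lower bound is $\geq 2(c-2)-2 = 2c-6 \geq c-3$ whenever $c\geq 3$ (the only remaining possibility $c=2$ forces the claim to be vacuous as $c-3<0$). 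Combining, $E^{1;\K}_{k,s}=0$ for all $(k,s)\neq(0,0)$ with $s-k\leq c-3$, which gives the assertion (i).

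\textbf{Case (ii): $d^*=1$.} Here the only filtration levels that can carry non-zero classes are $k=0,1,2$. For $k=1$, Lemma \ref{lmm: vanishing line of E1}(ii) gives vanishing for $s-k\leq c-3$. For $k=2$, which is now the truncation level $d^*+1$, Lemma \ref{lmm: vanishing line of E1}(iii) gives vanishing for $s-k\leq (c-2)\cdot 1 - 2 = c-4$. This latter bound is the weaker of the two, so it determines the overall vanishing line. Again combining with the trivial vanishing at $k=0$ (except for $(0,0)$) and at $k\geq 3$, we conclude $E^{1;\K}_{k,s}=0$ for $(k,s)\neq (0,0)$ whenever $s-k\leq c-4$, giving (ii).

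The argument is essentially a bookkeeping exercise on the slopes of the vanishing lines obtained in Lemma \ref{lmm: vanishing line of E1}; the only real subtlety, and the main place where one must be careful, is the treatment of the truncation column $k=d^*+1$, whose vanishing line has slope $1$ but a much lower $y$-intercept than the interior columns. In case (i) this column is dominated by the $k=1$ bound (once $d^*\geq 2$ and $c\geq 3$, and the remaining case is vacuous), while in case (ii) the truncation column becomes the binding constraint and is responsible for the loss of one dimension in the stability range.
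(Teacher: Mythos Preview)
Your proof is correct and follows essentially the same route as the paper's own argument: both read off the vanishing range from the $E^1$-page of the truncated spectral sequence via Lemma~\ref{lmm: vanishing line of E1}, minimize the bounds over the columns $1\le k\le d^*+1$, and observe that the minimum is attained at $k=1$ when $d^*\ge 2$ and at the truncation column $k=d^*+1$ when $d^*=1$. Your treatment is in fact slightly more careful than the paper's in explicitly noting that the edge case $c=d(\K)n\rmin=2$ makes the statement vacuous.
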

\begin{proof}
Let us write $\dmin^{\p}=\lfloor \frac{\dmin}{n}\rfloor$.
Consider the spectral sequences 
(\ref{equ: spectral sequ2}) and
(\ref{eq: spectral sequence K=R}).
Define the integer $a(k)$ by
\begin{equation*}
a(k)=(d(\K)n\rmin (\Sigma)-2)n_0(k)-\epsilon (k)
\quad
\mbox{for each $1\leq k\leq \dmin^{\p} +1$,}
\end{equation*}
where $n_0(k)$ and $\epsilon (k)$ denote the integers given by
\begin{equation*}\label{eq: epk}
(n_0(k),\epsilon (k))=
\begin{cases}
(k,1) & \mbox{ if }1\leq k\leq  \dmin^{\p},
\\
(\dmin^{\p},2) & \mbox{ if }k= \dmin^{\p} +1.
\end{cases}
\end{equation*}
Then, by
Lemma \ref{lmm: vanishing line of E1}, 
we see that
$E^{1;\K}_{k,s}=0$ for any $(k,s)\not=(0,0)$ if
$s-k\leq m_0=\min\{a(k):1\leq k\leq  \dmin^{\p} +1\}$ is satisfied.
Hence, $\tilde{H}_k(\Q^{D,\Sigma}_n(\K);\Z)=0$
for any $k\leq m_0$.
However, we see that
\begin{align*}
m_0&=
\min\{a(k):1\leq k\leq  \dmin^{\p} +1\}
=\min\{a(1),a(\dmin^{\p} +1) \}
\\
&=
\begin{cases}
d(\K)n\rmin (\Sigma)-3 & \mbox{ if }\dmin^{\p} \geq 2,
\\
d(\K)n\rmin (\Sigma)-4 & \mbox{ if } \dmin^{\p} =1.
\end{cases}
\end{align*}
Hence, we  obtain the assertions (i) and (ii).
\end{proof}
%
\begin{crl}\label{crl: connectedness}
$\I$
If $n\geq 2$ and $\lfloor \frac{\dmin}{n}\rfloor \geq 2$,
$\Q^{D,\Sigma}_n(\C)$ is
$(2n\rmin (\Sigma)-3)$-connected.
\par
$\II$
If $n\geq 2$ and $\lfloor \frac{\dmin}{n}\rfloor=1$, 
$\Q^{D,\Sigma}_n(\C)$ is
$(2n\rmin (\Sigma)-4)$-connected.
\par
$\III$
If $n=1$ and $\dmin \geq 2$, 
$\Q^{D,\Sigma}_n(\C)$ is
$(2\rmin (\Sigma)-3)$-connected.
\par
$\IV$
Let $n=\dmin=1$.
Then 
$\Q^{D,\Sigma}_n(\C)$ is
$(2\rmin (\Sigma)-4)$-connected
if $\rmin (\Sigma)\geq 3$, and it is simply connected if
$\rmin (\Sigma)=2$.
\end{crl}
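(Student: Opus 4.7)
The plan is to combine the simple connectivity of $\Q^{D,\Sigma}_n(\C)$ established in Lemma \ref{lmm: pi1=abelian}(i) with the homology vanishing ranges provided by Lemma \ref{lmm: vanishing line of homology}, and then invoke the Hurewicz theorem to convert vanishing of reduced integral homology into vanishing of homotopy groups. Since all the genuinely new input (the spectral sequence analysis behind Lemma \ref{lmm: vanishing line of homology} and the braid-theoretic argument behind Lemma \ref{lmm: pi1=abelian}) has already been carried out, what remains is essentially bookkeeping: specializing $d(\K)=2$ and matching the formulas case by case.

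First I would record the uniform input: by Lemma \ref{lmm: pi1=abelian}(i), $\Q^{D,\Sigma}_n(\C)$ is simply connected in every case under consideration. The Hurewicz theorem then asserts that if $\tilde{H}_i(\Q^{D,\Sigma}_n(\C);\Z)=0$ for all $i\leq N$, then $\pi_i(\Q^{D,\Sigma}_n(\C))=0$ for all $i\leq N$, i.e. the space is $N$-connected.

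Next I would split into cases according to the value of $\lfloor \dmin/n\rfloor$, plugging $\K=\C$ (so $d(\K)=2$) into Lemma \ref{lmm: vanishing line of homology}. For case (I), $n\geq 2$ and $\lfloor \dmin/n\rfloor\geq 2$, part (i) of that lemma gives $\tilde{H}_i=0$ for $i\leq 2n\rmin(\Sigma)-3$, and Hurewicz upgrades this to $(2n\rmin(\Sigma)-3)$-connectedness. For case (II), $n\geq 2$ and $\lfloor \dmin/n\rfloor=1$, part (ii) gives $\tilde{H}_i=0$ for $i\leq 2n\rmin(\Sigma)-4$, yielding $(2n\rmin(\Sigma)-4)$-connectedness by the same argument. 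For case (III), $n=1$ and $\dmin\geq 2$ give $\lfloor \dmin/n\rfloor=\dmin\geq 2$, so case (I) with $n=1$ applies verbatim. For case (IV), $n=\dmin=1$ gives $\lfloor \dmin/n\rfloor=1$, so case (II) with $n=1$ applies and yields $(2\rmin(\Sigma)-4)$-connectedness whenever $\rmin(\Sigma)\geq 3$ (so that $2\rmin(\Sigma)-4\geq 2$). When $\rmin(\Sigma)=2$ the estimate only gives path-connectedness, but simple connectivity is supplied directly by Lemma \ref{lmm: pi1=abelian}(i).

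There is no real obstacle here; the only point requiring care is the degenerate case $(n,\dmin,\rmin(\Sigma))=(1,1,2)$ in (IV), where the generic Hurewicz estimate degrades to triviality and one must fall back on Lemma \ref{lmm: pi1=abelian}(i) to retain the simple connectivity conclusion. Everything else is a direct substitution $d(\K)=2$ followed by a one-line appeal to Hurewicz.
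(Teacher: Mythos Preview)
Your proposal is correct and follows exactly the same approach as the paper's proof: invoke Lemma \ref{lmm: pi1=abelian}(i) for simple connectivity, specialize Lemma \ref{lmm: vanishing line of homology} with $d(\K)=2$, and apply the Hurewicz theorem. The paper's proof is in fact a one-line citation of these two lemmas plus Hurewicz, so your more detailed case-by-case verification is just an expanded version of the same argument.
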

\begin{proof}
Since $\Q^{D,\Sigma}_n(\C)$ is simply connected
(by Lemma  \ref{lmm: pi1=abelian}),
the assertions  follow from the Hurewicz Theorem and Lemma 
\ref{lmm: vanishing line of homology}.
\end{proof}

\begin{crl}\label{crl: connectedness K=R}
$\I$
If $n\geq 2$ and $\lfloor \frac{\dmin}{n}\rfloor \geq 2,$
$\Q^{D,\Sigma}_n(\R)$ is
$(n\rmin (\Sigma)-3)$-connected. 
\par
$\II$
Let $n\geq 2$ and $\lfloor \frac{\dmin}{n}\rfloor =1$.
Then $\Q^{D,\Sigma}_n(\R)$ is
$(n\rmin (\Sigma)-4)$-connected if  $n\rmin(\Sigma)\geq 5,$
and it is simply connected if $n=\rmin(\Sigma)= 2$.
\par
$\III$
Let $n=1$, $\dmin \geq 2$.
Then
$\Q^{D,\Sigma}_n(\R)$ is
$(\rmin (\Sigma)-3)$-connected if $\rmin(\Sigma)\geq 4$,
and it is simply connected if $\rmin(\Sigma)=3$.
%
\par
$\IV$
Let $n=\dmin =1$.
Then
$\Q^{D,\Sigma}_n(\R)$ is
$(\rmin (\Sigma)-4)$-connected if $\rmin(\Sigma)\geq 5$,
and it is simply connected if $\rmin(\Sigma)=3$ or $4$.
%
%
\end{crl}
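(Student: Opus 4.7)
The plan is to deduce Corollary \ref{crl: connectedness K=R} from Lemma \ref{lmm: vanishing line of homology} for $\K=\R$ (so $d(\K)=1$) by the Hurewicz theorem, exactly parallel to the proof of Corollary \ref{crl: connectedness}. The only genuine subtlety is that, unlike the complex case, simple connectedness of $\Q^{D,\Sigma}_n(\R)$ is not automatic: by Lemma \ref{lmm: pi1=abelian} (ii) and Remark \ref{rmk: path-connected}, the space is simply connected if and only if $(n,\rmin(\Sigma))\not=(1,2)$. One must therefore verify case-by-case that the hypothesis of Hurewicz is met, and to reconcile the homology vanishing range with the final $\pi_*$-vanishing range.

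First I would dispose of the generic situation. In case $\I$, since $n\geq 2$, one has $(n,\rmin(\Sigma))\not=(1,2)$ automatically, so $\Q^{D,\Sigma}_n(\R)$ is simply connected; since $\lfloor\dmin/n\rfloor\geq 2$, Lemma \ref{lmm: vanishing line of homology} (I) with $d(\K)=1$ gives $\tilde{H}_i(\Q^{D,\Sigma}_n(\R);\Z)=0$ for $i\leq n\rmin(\Sigma)-3$, and Hurewicz yields $(n\rmin(\Sigma)-3)$-connectedness. In case $\III$ the assumption $n=1$ with $\dmin\geq 2$ forces $\rmin(\Sigma)\geq 3$ (since $\rmin(\Sigma)\geq 4$ in the first sub-case and $\rmin(\Sigma)=3$ in the second), so again $(n,\rmin(\Sigma))\not=(1,2)$ and simple connectedness holds; since $\lfloor\dmin/n\rfloor=\dmin\geq 2$, Lemma \ref{lmm: vanishing line of homology} (I) plus Hurewicz give $(\rmin(\Sigma)-3)$-connectedness for $\rmin(\Sigma)\geq 4$, while for $\rmin(\Sigma)=3$ only simple connectedness is claimed, which we already have.

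Next I would handle the delicate low-dimensional cases $\II$ and $\IV$, where $\lfloor\dmin/n\rfloor=1$ and one must use Lemma \ref{lmm: vanishing line of homology} (II) instead of (I), losing one unit in the connectedness bound. In case $\II$ with $n\geq 2$ one again has $(n,\rmin(\Sigma))\not=(1,2)$, so simply connectedness holds. If $n\rmin(\Sigma)\geq 5$, Lemma \ref{lmm: vanishing line of homology} (II) combined with Hurewicz yields $(n\rmin(\Sigma)-4)$-connectedness. In the exceptional sub-case $n=\rmin(\Sigma)=2$ the homology vanishing only delivers $\tilde{H}_0=0$, but simple connectedness is already in hand from Lemma \ref{lmm: pi1=abelian}, which is exactly what is asserted. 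Case $\IV$ is the most delicate: with $n=\dmin=1$ one must exclude $\rmin(\Sigma)=2$ (where the space is not even path-connected, by Remark \ref{rmk: path-connected}) and assume $\rmin(\Sigma)\geq 3$; then $(n,\rmin(\Sigma))\not=(1,2)$ gives simple connectedness, Lemma \ref{lmm: vanishing line of homology} (II) with $\lfloor\dmin/n\rfloor=1$ gives $\tilde{H}_i=0$ for $i\leq \rmin(\Sigma)-4$, and Hurewicz yields $(\rmin(\Sigma)-4)$-connectedness when $\rmin(\Sigma)\geq 5$; for $\rmin(\Sigma)=3$ or $4$ the homological bound is vacuous but simple connectedness has already been established.

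The main obstacle is purely bookkeeping: one must carefully match the two distinct vanishing ranges of Lemma \ref{lmm: vanishing line of homology} (depending on whether $\lfloor\dmin/n\rfloor$ is $\geq 2$ or $=1$) with the precise hypothesis $(n,\rmin(\Sigma))\not=(1,2)$ that guarantees simple connectedness, and observe that whenever Hurewicz gives a weaker conclusion than simple connectedness, Lemma \ref{lmm: pi1=abelian} rescues the statement. No new spectral-sequence argument is needed; the assertion is formal once Lemmas \ref{lmm: pi1=abelian} and \ref{lmm: vanishing line of homology} are in place.
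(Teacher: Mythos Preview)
Your proposal is correct and follows essentially the same approach as the paper: invoke Lemma \ref{lmm: pi1=abelian} to obtain simple connectedness whenever $(n,\rmin(\Sigma))\not=(1,2)$, then apply the Hurewicz theorem together with the homology vanishing of Lemma \ref{lmm: vanishing line of homology} (with $d(\K)=1$). The paper's proof is a terse one-liner to this effect, while you have carefully carried out the case-by-case bookkeeping that the paper leaves implicit.
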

\begin{proof}
If $(n,\rmin(\Sigma))\not=(1,2)$,
the space $\Q^{D,\Sigma}_n(\R)$ is simply connected
(by  Lemma \ref{lmm: pi1=abelian}).
Thus
the assertions easily  follow from 
he Hurewicz Theorem and Lemma 
\ref{lmm: vanishing line of homology}.
\end{proof}
\begin{crl}\label{crl: III}\label{crl: III+1connected}
Let $\textbf{\textit{a}}\not= {\bf 0}_r\in (\Z_{\geq 0})^r$.
\par
$\I$
If the condition $($\ref{1.4}$)^*$ holds, 
the stabilization map
$$
s_{D,D+\textbf{\textit{a}}}:
\Q^{D,\Sigma}_n(\C)\to 
\Q^{D+\textbf{\textit{a}},\Sigma}_n(\C)
$$
is a homotopy equivalence through dimension
$d(D;\Sigma ,n,\C)$.
\par
$\II$
If the condition $($\ref{1.4}$)^{\dagger}$ holds, 
the stabilization map
$$
s_{D,D+\textbf{\textit{a}}}^{\R}:
\Q^{D,\Sigma}_n(\R)\to 
\Q^{D+\textbf{\textit{a}},\Sigma}_n(\R)
$$
is a homotopy equivalence through dimension
$d(D;\Sigma ,n,\R)$.
\par
$\III$
If the condition $($\ref{1.4}$)^{\dagger}$ holds, 
the stabilization map
$$
s_{D,D+\textbf{\textit{a}}}^{}:
\Q^{D,\Sigma}_n(\C)\to 
\Q^{D+\textbf{\textit{a}},\Sigma}_n(\C)
$$
is a $\Z_2$-equivariant homotopy equivalence through dimension
$d(D;\Sigma ,n,\R)$.
\end{crl}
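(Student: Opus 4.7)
The plan is to deduce this corollary from the homology stability statements (Corollary \ref{crl: III*} and Theorem \ref{thm: III (KY16)}), together with the equivariant version (Corollary \ref{crl: III* (KY16)}), by upgrading homology equivalences to homotopy equivalences via simple connectedness. The central tool is the standard fact: a map $f:X\to Y$ between simply connected spaces which is a homology equivalence through dimension $N$ is a homotopy equivalence through dimension $N$. This follows from the relative Hurewicz theorem applied to the mapping cylinder pair, after observing that $\pi_1(Y,X)=0$ and $H_i(Y,X;\Z)=0$ for $i\le N$.

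For part (I), note that since $\textbf{\textit{a}}\in (\Z_{\geq 0})^r$ is non-negative, the $r$-tuple $D+\textbf{\textit{a}}$ also satisfies the condition $(\ref{1.4})^*$. Hence by Corollary \ref{crl: 1-connected}(I), both $\Q^{D,\Sigma}_n(\C)$ and $\Q^{D+\textbf{\textit{a}},\Sigma}_n(\C)$ are simply connected. Combining this with the homology equivalence through dimension $d(D;\Sigma,n,\C)$ given by Corollary \ref{crl: III*} and the general principle above yields the desired homotopy equivalence. Part (II) proceeds identically, using Corollary \ref{crl: 1-connected}(II) to ensure the real spaces are simply connected under $(\ref{1.4})^{\dagger}$ (again preserved by adding $\textbf{\textit{a}}$), and Theorem \ref{thm: III (KY16)} for the homology input.

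For part (III), the plan is to invoke the standard characterization of $\Z_2$-equivariant homotopy equivalences through a dimension: a $\Z_2$-equivariant map between $\Z_2$-CW complexes is such an equivalence through dimension $N$ if and only if its restrictions $f^H$ to $H$-fixed subspaces are homotopy equivalences through dimension $N$ for every subgroup $H\subset \Z_2$. In our situation, for $H=\{1\}$ the restriction is simply $s_{D,D+\textbf{\textit{a}}}$, which by part (I) is a homotopy equivalence through dimension $d(D;\Sigma,n,\C)\ge d(D;\Sigma,n,\R)$. For $H=\Z_2$, the identifications (\ref{eq: 2.23}) and the relation (\ref{eq: stabZ2}) give $(s_{D,D+\textbf{\textit{a}}})^{\Z_2} = s^{\R}_{D,D+\textbf{\textit{a}}}$, which by part (II) is a homotopy equivalence through dimension $d(D;\Sigma,n,\R)$.

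The proof is essentially formal once the three homological stability results and the connectivity results are in hand; the main conceptual point, rather than an obstacle, is ensuring the conditions $(\ref{1.4})^*$ and $(\ref{1.4})^{\dagger}$ persist under adding the non-negative tuple $\textbf{\textit{a}}$ (which is immediate since $d_{\min}$ only increases) so that simple connectedness applies to both source and target. Writing out the Hurewicz-based upgrade explicitly in one lemma at the beginning and then invoking it three times is the cleanest organization.
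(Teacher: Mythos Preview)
Your proposal is correct and follows essentially the same approach as the paper: upgrade the homology equivalences of Corollary \ref{crl: III*} and Theorem \ref{thm: III (KY16)} to homotopy equivalences using the simple connectedness from Corollary \ref{crl: 1-connected}, and for (iii) check the two subgroups of $\Z_2$ using (i), (ii), the identity $(s_{D,D+\textbf{\textit{a}}})^{\Z_2}=s_{D,D+\textbf{\textit{a}}}^{\R}$, and the inequality $d(D;\Sigma,n,\R)\leq d(D;\Sigma,n,\C)$. Your explicit remark that the conditions $(\ref{1.4})^*$ and $(\ref{1.4})^{\dagger}$ are inherited by $D+\textbf{\textit{a}}$ (so the target is also simply connected) is a useful detail that the paper's terse proof leaves implicit.
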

\begin{proof}
The assertions  (i) and (ii) follow from
Theorem \ref{thm: III (KY16)}, 
Corollaries \ref{crl: III*} and \ref{crl: 1-connected}.
Since $d(D;\Sigma ,n,\R)<d(D;\Sigma ,n,\C)$ and
$(s_{D,D+\textbf{\textit{a}}})^{\Z_2}=s_{D,D+\textbf{\textit{a}}}^{\R}$,
the assertion (iii) follows from (i) and (ii).
\end{proof}
\section{Scanning maps}\label{section: scanning maps}

In this section we study about the configuration space model of
$\Q^{D,\Sigma}_n(\K)$ and the corresponding scanning map.
\par
\paragraph{8.1 Configuration space models.}
First, consider about the configuration space model of $\Q^{D,\Sigma}_n(\K)$.
\begin{dfn}\label{dfn: SP}
{\rm
 For a positive integer $d\geq 1$ and a based space $X,$ let
 $\SP^d(X)$ denote {\it the $d$-th symmetric product} of $X$ defined as
the orbit space 
\begin{equation}
\SP^d(X)=X^d/S_d,
\end{equation}
where the symmetric group $S_d$ of $d$ letters acts on the $d$-fold
product $X^d$ in the natural manner.
}
\end{dfn}

\begin{rmk}
{\rm
(i)
Note that
an element $\eta\in \SP^d(X)$ may be identified with a formal linear
combination
\begin{equation}
\eta =\sum_{k=1}^sn_kx_k,
\end{equation}
where
$\{x_k\}_{k=1}^s\in C_s(X)$ 
and $\{n_k\}_{k=1}^s\subset \N$ with
$\sum_{k=1}^sn_k=d$.
In this situation we shall refer to $\eta$ as configuration (or divisor) of points, the point
$x_k\in X$ having a multiplicity $n_k$.
\par
(ii)
For example, when $X=\C$, we have the natural homeomorphism
\begin{equation}\label{eq: identification}
\psi_d:
\P_d^{\C}\stackrel{\cong}{\longrightarrow}\SP^d(\C)
\end{equation}
given by using the above identification
\begin{equation}\label{eq:homeo}
\psi_d(f(z))=
\sum_{k=1}^sd_k\alpha_k
\qquad
\mbox{for $f(z)=\prod_{k=1}^s(z-\alpha_k)^{d_k}\in \P_d^{\C}$.}
\quad
\qed
\end{equation}
}
\end{rmk}

\begin{dfn}
{\rm
(i) For a subspace $A\subset X$, let $\SP^d(X,A)$ denote the quotient space
\begin{equation}
\SP^d(X,A)=\SP^d(X)/\sim
\end{equation}
where the equivalence relation $\sim$ is defined by
\begin{equation}
\xi\sim \eta \Leftrightarrow
\xi \cap (X\setminus A)=\eta \cap (X\setminus A)
\quad
\mbox{ for }\xi,\eta\in \SP^d(X).
\end{equation}
Thus, the points of $A$ are ignored.
When $A\not=\emptyset$, by adding a point in $A$ we have the natural
inclusion
$
\SP^d(X,A)\subset \SP^{d+1}(X,A).
$
Thus, when $A\not=\emptyset$, one can define the space
$\SP^{\infty}(X,A)$ by the union
\begin{equation}
\SP^{\infty}(X,A)=\bigcup_{d\geq 0}\SP^d(X,A),
\end{equation}
where we set $\SP^0(X,A)=\{\emptyset\}$ and
$\emptyset$ denotes the empty configuration.
\par
(ii)
From now on, we always assume that $X\subset \C$.
For each $r$-tuple $D=(d_1,\cdots ,d_r)\in \N^r$,
let $\SP^D(X)=\prod_{i=1}^r\SP^{d_i}(X)$, and
define the space
$\QQ_{D,n}(X)$ 
by
\begin{align}
\QQ_{D,n}(X)
&=
\{(\xi_1,\cdots ,\xi_r)
\in \SP^D(X):\mbox{the condition $(*)_{n}^{\Sigma}$ holds}\},
\end{align}
where the condition $(*)_{n}^{\Sigma}$ 
is given by
\begin{enumerate}
\item[$(*)_{n}^{\Sigma}:$]
The configuration $(\bigcap_{k\in \sigma}\xi_k)\cap \R$
 contains no point $x\in X$ of multiplicity $\geq n$
 for any $\sigma \in I(\KS)$.
\end{enumerate}
\par
(iii)
When $A\subset X$ is a subspace, define
an equivalence relation \lq\lq$\sim$\rq\rq \ on
the space $\QQ_{D,n}(X)$ 
by
$$
(\xi_1,\cdots ,\xi_r)\sim
(\eta_1,\cdots ,\eta_r)
\quad
\mbox{if }\quad
\xi_i \cap (X\setminus A)=\eta_i \cap (X\setminus A)
$$
for each $1\leq j\leq r$.
Let 
$\QQ_{D,n}(X,A)$ 
be the quotient space
defined by
\begin{equation}
\QQ_{D,n}(X,A)=\QQ_{D,n}(X)/\sim .
\end{equation}
When $A\not=\emptyset$,  by
adding points in $A$ we have  natural inclusion
\begin{equation}\label{eq: D+k}
\QQ_{D,n}(X,A)\subset \ \QQ_{D+\e_i,n}(X,A)
\quad
\mbox{ for each }1\leq i\leq r,
\end{equation}
where
$D+\e_i=(d_1,\cdots,d_{i-1},d_i+1,d_{i+1},\cdots ,d_r)$.
\par
Thus,  when $A\not=\emptyset$, one can define the space
$\QQ_n (X,A)$ 
by the union
\begin{equation}
\QQ_{n}(X,A) =\bigcup_{D\in \N^r}
\QQ_{D,n}(X,A),
\end{equation}
where the empty configuration
$(\emptyset ,\cdots ,\emptyset)$ is the base-point of
$\QQ_n (X,A)$.
}
\end{dfn}
\begin{rmk}\label{rmk: ESigma}
{\rm
(i)
Let $D=(d_1,\cdots ,d_r)\in \N^r$.
Then 
by using the identification
(\ref{eq: identification})
we easily obtain the homeomorphism 
\begin{equation}\label{eq: Pol=E}
\begin{CD}
\Q^{D,\Sigma}_n(\C) @>\Psi_D>\cong> \QQ_{D,n}(\C)
\\
(f_1(z),\cdots ,f_r(z))
@>>>
(\psi_{d_1}(f_1(z)),\cdots ,\psi_{d_r}(f_r(z)))
\end{CD}
\end{equation}
\par
(ii)
Now 
let $\varphi_D:\C \stackrel{\cong}{\longrightarrow}U_D$
and
$\textbf{\textit{x}}_D=(x_{D,1},\cdots ,x_{D,r})\in F(\C\setminus \overline{U_D},r)$ 
be the homeomorphism and the point for
defining the stabilization map $s_{D}$
given
in Definition \ref{dfn: stab}.
Then define the map
\begin{align}
s_{D}^{\Sigma}&:\QQ_{D,n}(\C) \to \QQ_{D+\e,n}(\C)
\qquad
\mbox{ by}
\\
\nonumber
s_{D}^{\Sigma}(\xi_1,\cdots ,\xi_r)
&=
(\varphi_D(\xi_1)+x_{D,1},\cdots
,\varphi_D(\xi_r)+x_{D,r})
\end{align}
for
$(\xi_1,\cdots ,\xi_r)\in \QQ_{D,n}$, where we write
$\varphi_D(\xi)=\sum_{k=1}^sn_k\varphi_D(x_k)$
if $\xi=\sum_{k=1}^sn_kx_k\in \SP^d(\C)$
and $(n_k,x_k)\in \N\times \C$ with $\sum_{k=1}^sn_k=d$.
\par
Then by using the above homeomorphism (\ref{eq: Pol=E}),
we have the following commutative diagram
\begin{equation}\label{CD: stabilization}
\begin{CD}
\Q^{D,\Sigma}_n(\C) @>s_{D,D+\e}>> \Q^{D+\e,\Sigma}_n(\C)
\\
@V{\Psi_D}V{\cong}V @V{\Psi_{D+\e}}V{\cong}V
\\
\QQ_{D,n}(\C) @>s_{D}^{\Sigma}>> \QQ_{D+\e,n}(\C)
\end{CD}
\end{equation}
\par
(iv)
Note that $\QQ_{D,n}(\C)$ is path-connected.
Indeed, for any two points 
$
\xi_0,\xi_1\in \QQ_{D,n}(\C),
$
one can construct
a path $\omega :[0,1]\to \QQ_{D,n}(\C)$ such that
$\omega (i)=\xi_i$ for $i\in\{0,1\}$
by using the string representation 
used in  \cite[\S Appendix]{GKY1}. 
Thus the space $\Q^{D,\Sigma}_n(\C)$ is also path-connected. 
By choosing the path $\omega$ $\Z_2$-equivariant way, one can show that
$\Q^{D,\Sigma}_n(\R)$ is also path-connected if $n\geq 2$ or if $n=1$ and 
$\rmin (\Sigma)\geq 3$ $\Leftrightarrow$ $(n,\rmin(\Sigma))\not=(1,2)$
(see  also the proof of Lemma \ref{lmm: pi1=abelian}
and Remark \ref{rmk: path-connected}).
\qed
}
\end{rmk}
\begin{dfn}\label{dfn: stability K=C}
{\rm
Define the stabilized space $\Q_{D+\infty}^{\Sigma}(\C)$ by
the colimit
\begin{equation}\label{eq: stabilized space}
\Q_{n}^{D+\infty ,\Sigma}(\C)=
\lim_{k\to\infty}
\Q^{D+k\e,\Sigma}_n(\C),
\end{equation}
where 
 the colimit is taken from the family of stabilization maps}
\begin{equation}
\{s_{D+k\e,D+(k+1)\e}:\Q^{D+k\e,\Sigma}_n(\C)\to 
\Q^{\Sigma,D+(k+1)\e}_n(\C)\}_{k\geq 0}
\qquad\quad
\qed
\end{equation}
\end{dfn}
\paragraph{8.2 Scanning maps.}
Now we are ready to define the scanning map.
From now on, we identify $\C=\R^2$ in a usual way.
\begin{dfn}
{\rm
For a rectangle $X$ in $\C =\R^2$, let $\sigma X$ denote
the union of the sides of $X$ which are parallel to the $y$-axis, and
for a subspace $Z\subset \C=\R^2$, let $\overline{Z}$ be the closure of $Z$.
From now on, let $I$ denote the interval
$I=[-1,1]$
and
let $0<\epsilon <\frac{1}{1000000}$ be any fixed real number.
\par
For each $x\in\R$, let $V(x)$ be the set defined by
\begin{eqnarray}
V(x)
&=&
\{w\in\C: \vert \mbox{Re}(w)-x\vert  <\epsilon , \vert\mbox{Im}(w)\vert<1 \}
\\
\nonumber
&=&
(x-\epsilon ,x+\epsilon )\times (-1,1),
\end{eqnarray}
and let's identify $I\times I=I^2$ with the closed unit rectangle
$\{t+s\sqrt{-1}\in \C: -1\leq t,s\leq 1\}$ 
in $\C$.
\par
For each $D=(d_1,\cdots ,d_r)\in \N^r$,
define {\it the horizontal scanning map}
\begin{equation}
sc^{}_{D}:\QQ_{D,n}(\C )\to \Omega \QQ_n (I^2,\partial I\times I)
=\Omega \QQ_n (I^2,\sigma I^2)
\end{equation}
as follows.
For each $r$-tuple 
$\alpha =(\xi_1,\cdots ,\xi_r)\in \QQ_{D,n}(\C)$
of configurations,
let
$
sc^{}_D(\alpha):\R \to \QQ_n (I^2,\partial I\times I)
=\QQ_n (I^2,\sigma I^2)
$
denote the map given by
\begin{align*}
\R\ni x
&\mapsto
(\xi_1\cap\overline{V}(x),\cdots ,\xi_r\cap\overline{V}(x))
\in
\QQ_n (\overline{V}(x),\sigma \overline{V}(x))
\cong \QQ_{n}(I^2,\sigma I^2),
\quad
\end{align*}
where 
we use the canonical identification
$(\overline{V}(x),\sigma \overline{V}(x))
\cong (I^2,\sigma I^2).$
\par\vspace{2mm}\par
Since $\dis\lim_{x\to\pm \infty}sc^{}_D(\alpha)(x)
=(\emptyset ,\cdots ,\emptyset)$,  
by setting $sc^{}_D(\alpha)(\infty)=(\emptyset ,\cdots ,\emptyset)$
we obtain a based map
$sc^{}_{D}(\alpha)\in \Omega \QQ_n(I^2,\sigma I^2),$
where we identify $S^1=\R \cup \infty$ and
we choose the empty configuration
$(\emptyset ,\cdots ,\emptyset)$ as the base-point of
$\QQ_n (I^2,\sigma I^2)$.
One can show that the following  diagram is homotopy commutative:
\begin{equation}\label{CD: scanning}
\begin{CD}
\QQ_{D+k\e,n} (\C)@>sc_{D+k\e}>> \Omega \QQ_n (I^2,\sigma I^2)
\\
@V{s_{D+k\e}^{\Sigma}}VV \Vert @.
\\
\QQ_{D+(k+1)\e,n}(\C) @>sc_{D+(k+1)\e}>> \Omega \QQ_n (I^2,\sigma I^2) 
\end{CD}
\end{equation}
By using the above diagram
and by identifying
$\Q_n^{D+k\e,\Sigma}(\C)$ with 
$\QQ_{D+k\e,n}(\C)$,
we finally obtain {\it the stable horizontal scanning map}
\begin{equation}\label{eq: stable horizontal}
S^H
=\lim_{k\to \infty}sc_{D+k\e}
:\Q^{D+\infty,\Sigma}_{n}(\C)
\to
\Omega
\QQ_n (I^2,\sigma I^2),
\end{equation}
where
$\Q^{D+\infty,\Sigma}_{n}(\C)$ is defined in
(\ref{eq: stabilized space}).
}
\end{dfn}
\begin{thm}[\cite{Se}, (cf. \cite{Gu1}, \cite{KY10})]\label{thm: scanning map}
The  stable horizontal scanning map
$$
S^H
:\Q^{D+\infty,\Sigma}_{n}(\C)
\stackrel{\simeq}{\longrightarrow}
\Omega
\QQ_n (I^2,\sigma I^2)
$$
is a homotopy equivalence.
\end{thm}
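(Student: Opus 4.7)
The plan is to adapt Segal's classical scanning-map argument \cite{Se} for configuration spaces in $\C$, as extended to bounded-multiplicity configurations by Guest \cite{Gu1} and to the case $(\XS,D)=(\CP^{m-1},D_m(d))$ by the authors in \cite{KY10}. The essential input, which has already been secured, is the homology stability of the stabilization maps $s_{D,D+\textbf{\textit{a}}}$ (Corollary \ref{crl: III*}), together with the simple connectivity of each $\Q^{D,\Sigma}_n(\C)$ (Corollary \ref{crl: 1-connected}).

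First I would equip $M=\bigsqcup_{D\in\N^r}\QQ_{D,n}(\C)$ with an H-space structure (homotopy-equivalent to a topological monoid) by juxtaposition: translate one $r$-tuple of configurations far to the right so that its support is disjoint from a second one, then superimpose componentwise. This operation is compatible, up to homotopy, with the stabilization maps $s_D^{\Sigma}$ of diagram (\ref{CD: stabilization}), so the stabilized space $\Q^{D+\infty,\Sigma}_{n}(\C)$ may be identified with the mapping telescope of repeated juxtaposition. By the McDuff--Segal group-completion theorem, together with the homology stability of Corollary \ref{crl: III*} and the simple connectivity above, the natural map from this telescope into $\Omega BM$ is a homology equivalence.

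The second step is to identify $BM \simeq \QQ_n(I^2,\sigma I^2)$ in such a way that the group-completion comparison map becomes precisely the horizontal scanning map $S^H$. This is a standard Dold--Thom/quasifibration argument applied to the restriction
$$
\mathrm{res}\colon\ \QQ_{n}(\R\times(-1,1),\,\R\times\{\pm 1\})\ \longrightarrow\ \QQ_n(I^2,\sigma I^2),
$$
obtained by intersecting each $r$-tuple with the closed vertical strip $\overline{V}(0)$: one shows that $\mathrm{res}$ is a quasifibration whose total space is contractible (retract all configurations horizontally onto $\overline{V}(0)$). Looping the base identifies $\Omega \QQ_n(I^2,\sigma I^2)$ with the classifying-space model arising from $M$, and unravelling the scanning construction together with diagram (\ref{CD: scanning}) shows that $S^H$ is exactly the resulting comparison map.

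The main obstacle will be verifying the quasifibration property of $\mathrm{res}$ in the presence of the non-resultant condition $(*)_n^{\Sigma}$, which depends on the combinatorics of $I(\KS)$. The point I expect to make the argument go through is that $(*)_n^{\Sigma}$ is a \emph{local} condition on the real line: it depends only on multiplicities at individual points $x\in\R$, and therefore passes through juxtapositions and through Segal's lifting construction just as the ordinary no-common-real-root condition does in \cite{KY10}. The minimal-non-face structure $I(\KS)$ enters only in determining the target fibre $\QQ_n(I^2,\sigma I^2)$, not in obstructing local factorizations. Granted the quasifibration, $S^H$ is a homology equivalence; since both source and target are simply connected --- the source by Corollary \ref{crl: 1-connected} passed to the colimit, and the target by a direct connectivity computation for $\QQ_n(I^2,\sigma I^2)$ in the spirit of Lemma \ref{lmm: vanishing line of homology} --- Whitehead's theorem upgrades this to a homotopy equivalence, completing the proof.
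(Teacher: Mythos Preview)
Your overall plan --- reduce to a Segal-style quasifibration argument, exploiting that the condition $(*)_n^{\Sigma}$ is local along the real axis --- is exactly the route the paper takes (it defers to \cite[Theorem~5.6]{KY10}, which in turn follows \cite[Prop.~3.2, Lemma~3.4]{Se} and \cite[Prop.~2]{Gu1}). Your remark that the combinatorics of $I(\KS)$ enters only in naming the target, not in obstructing the local lifting constructions, is the heart of the matter.

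However, the concrete quasifibration you write down does not work. First, the pair $(\R\times(-1,1),\,\R\times\{\pm 1\})$ is ill-formed: the relative subset is not contained in the ambient space. More substantively, even reading it as $(\R\times I,\,\R\times\partial I)$, neither contractibility nor the identification of the fibre goes through as stated. Your ``horizontal retraction onto $\overline V(0)$'' only squeezes the $x$-coordinate and leaves behind a non-trivial space of configurations on a vertical segment; any subsequent vertical push toward $\R\times\partial I$ must drag points across the real line $y=0$, where $(*)_n^{\Sigma}$ is live and may well be violated at intermediate times (the original configuration was only constrained at $y=0$, not at the heights that get translated into $y=0$). Nor does the fibre of your $\mathrm{res}$ over the empty configuration identify with $M_\infty$ or with $\Omega BM$: it consists of configurations supported on the two complementary half-strips, hence looks like $M\times M$ rather than a one-sided telescope. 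So this map does not give the desired delooping of $\QQ_n(I^2,\sigma I^2)$, and the link you assert between it and the group-completion comparison map is missing.

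What actually proves Theorem~\ref{thm: scanning map} (and what \cite{KY10} carries out) is Segal's direct argument: filter the target $\QQ_n(I^2,\sigma I^2)$ by degree and verify the Dold--Thom criterion on each successive stratum, by showing that ``adding a point near $\sigma I^2$'' induces a homotopy equivalence of fibres; this is the same mechanism the present paper spells out in the proof of Lemma~\ref{lmm: the map wn}. Locality of $(*)_n^{\Sigma}$ in the $x$-direction is precisely what makes these local lifts exist. Note also that Corollary~\ref{crl: III*} is not an input to this theorem; in the paper's logic, homology stability and the scanning theorem are proved independently and are only combined afterwards in Theorem~\ref{thm: V}.
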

\begin{proof}
The proof is  analogous to the one given in
\cite[Prop. 3.2, Lemma 3.4]{Se} and \cite[Prop. 2]{Gu1}.
However, as it appears to be probably most difficult and least familiar part of the article \cite{Se}, we gave its precise proof in \cite[Theorem 5.6]{KY10}
(see also \cite[Remark 5.8]{KY10}).
\end{proof}
\begin{dfn}\label{dfn: stability K=R}
{\rm
(i)
Define the stabilized space $\Q_{D+\infty}^{\Sigma}(\R)$ by
the colimit
\begin{equation}\label{eq: stabilized space K=R}
\Q_{n}^{D+\infty ,\Sigma}(\R)=
\lim_{k\to\infty}
\Q^{D+k\e,\Sigma}_n(\R),
\end{equation}
where  
the colimit is taken from the family of stabilization maps
\begin{equation}
\{s^{\R}_{D+k\e,D+(k+1)\e}:\Q^{D+k\e,\Sigma}_n(\R)\to 
\Q^{\Sigma,D+(k+1)\e}_n(\R)\}_{k\geq 0}.
\end{equation}
\par
(ii)
Recall the $\Z_2$-action on the space
$\Q^{D,\Sigma}_n(\C)$ induced from the complex conjugation on $\C$.
Then
by using  (\ref{eq: stabZ2}), one can easily see the following:
\begin{equation}
\Q^{D+\infty}_n(\R)=(\Q^{D+\infty}_n(\C))^{\Z_2}.
\end{equation}
Moreover, since
$
s^{\R}_{D,k}=(s_{D,k})^{\Z_2}$
as in Remark \ref{rmk: equiv-remark},
one can define the horizontal scanning map
\begin{equation}
S^{\Z_2}
=\lim_{k\to \infty}(sc_{D+k\e})^{\Z_2}
:\Q^{D+\infty,\Sigma}_{n}(\R)
\to
\Omega
\QQ_n (I^2,\sigma I^2)^{\Z_2}
\end{equation}
in a complete similar way as (\ref{eq: stable horizontal}).
\par
Since $\Q^{D,\Sigma}_n(\R)=\Q^{D,\Sigma}_n(\C)^{\Z_2}\subset \Q^{D,\Sigma}_n(\C)$, one can identify
the  space $\Q^{D+\infty,\Sigma}_{n}(\R)$ with the subspace
of
$\Q^{D+\infty,\Sigma}_{n}(\C)$.
By this identification, we can identify
\begin{equation}
\Q^{D+\infty,\Sigma}_{n}(\R)=\Q^{D+\infty,\Sigma}_{n}(\C)^{\Z_2}
\ \mbox{ and }\ 
S^{\Z_2}=S^H\vert \Q^{D+\infty,\Sigma}_{n}(\R)=(S^H)^{\Z_2}.
\end{equation}
}
\end{dfn}
\begin{thm}[\cite{Se}, (cf. \cite{Gu1}, \cite{KY10})]\label{thm: scanning map K=R}
The  stable horizontal scanning map
$$
(S^H)^{\Z_2}
:\Q^{D+\infty,\Sigma}_{n}(\R)
\stackrel{\simeq}{\longrightarrow}
\Omega
\QQ_n (I^2,\sigma I^2)^{\Z_2}
$$
is a homotopy equivalence if $(n,\rmin(\Sigma))\not=(1,2)$.
\end{thm}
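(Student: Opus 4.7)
The idea is to lift the proof of Theorem \ref{thm: scanning map} (given as \cite[Theorem 5.6]{KY10}) to the $\Z_2$-equivariant setting, where $\Z_2$ acts by complex conjugation on $\C$. First I observe that the horizontal scanning map $S^H$ is $\Z_2$-equivariant. This is immediate from Definition \ref{dfn: stability K=R}: for each $x\in\R$ the closed rectangle $\overline{V}(x)$ is invariant under complex conjugation, so the assignment $\alpha\mapsto \alpha\cap\overline{V}(x)$ commutes with the $\Z_2$-action on $\QQ_{D,n}(\C)$ and on $\QQ_n(I^2,\sigma I^2)$. Hence $S^H$ restricts to the desired map $(S^H)^{\Z_2}$.

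Next, the hypothesis $(n,\rmin(\Sigma))\not=(1,2)$ guarantees that $\Q^{D,\Sigma}_n(\R)=\Q^{D,\Sigma}_n(\C)^{\Z_2}$ is path-connected (Remark \ref{rmk: 2.8} and Lemma \ref{lmm: pi1=abelian}), and the same string-manipulation argument shows that $\QQ_n(I^2,\sigma I^2)^{\Z_2}$ is path-connected. This is essential, since the group-completion / scanning argument underlying Segal's theorem requires that the stabilized source be a genuine group-like $H$-space on the nose of $\pi_0$; without $\Z_2$-path-connectedness, one only gets an equivalence on a single component of the target.

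The core step is to run the quasifibration argument of \cite[Theorem 5.6]{KY10} $\Z_2$-equivariantly. The proof of Theorem \ref{thm: scanning map} proceeds by subdividing a larger horizontal strip, producing (in the style of McDuff--Segal) a quasifibration whose total space is contractible, and then invoking Segal's telescope/group-completion machinery. All spaces in this construction carry natural $\Z_2$-actions induced by complex conjugation, and all projection and inclusion maps are $\Z_2$-equivariant because the scanning rectangles $V(x)$ ($x\in\R$) are $\Z_2$-invariant and the symmetric-product functor commutes with the $\Z_2$-action. Taking $\Z_2$-fixed points throughout, one obtains a fixed-point quasifibration whose fibers remain contractible (they are unbased configuration spaces of particles in a $\Z_2$-invariant open rectangle, which are $\Z_2$-equivariantly contractible by translating particles off to infinity in a $\Z_2$-symmetric way), and then the telescope argument recovers the homotopy equivalence on fixed points.

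The main obstacle is verifying that the quasifibration property survives passage to $\Z_2$-fixed points, since in general $(\cdot)^{\Z_2}$ does not preserve quasifibrations. Here the saving feature is that the Dold--Thom--Segal type open stratification used in \cite[Theorem 5.6]{KY10} can be chosen $\Z_2$-invariantly (each stratum is indexed by real configuration data that is symmetric under conjugation), and over each stratum the homotopy equivalence between the fiber and the nearby fiber can be realized $\Z_2$-equivariantly by the same explicit particle-translation homotopy used in the non-equivariant proof, restricted to the $\Z_2$-fixed subspace. Once this is checked, the equivariant Dold quasifibration criterion produces a quasifibration of $\Z_2$-fixed-point spaces, and the Segal group-completion argument applied to the path-connected $H$-space $\Q^{D+\infty,\Sigma}_n(\R)$ completes the proof.
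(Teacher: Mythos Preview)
Your proposal is correct and follows essentially the same approach as the paper. The paper's own proof is extremely terse: it simply notes that $\Q^{D,\Sigma}_n(\R)$ is simply connected when $(n,\rmin(\Sigma))\not=(1,2)$ and declares the argument ``completely analogous'' to Theorem~\ref{thm: scanning map} (which is itself deferred to \cite[Theorem 5.6]{KY10}); your sketch of running the Segal--Dold--Thom quasifibration argument $\Z_2$-equivariantly, with the hypothesis ensuring connectivity of the $\Z_2$-fixed configuration monoid so that the group-completion step goes through, is precisely the content the paper leaves implicit.
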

\begin{proof}
Note that $\Q^{D,\Sigma}_n(\R)$ is simply connected if $(n,\rmin(\Sigma))\not=(1,2)$. 
The proof is completely analogous to that of Theorem \ref{thm: scanning map}.
\end{proof}
\begin{crl}\label{crl: scanning map K=R}
The  stable horizontal scanning map
$$
S^H
:\Q^{D+\infty,\Sigma}_{n}(\C)
\stackrel{\simeq}{\longrightarrow}
\Omega
\QQ_n (I^2,\sigma I^2)
$$
is a $\Z_2$-equivariant homotopy equivalence
if $(n,\rmin(\Sigma))\not=(1,2)$.
\end{crl}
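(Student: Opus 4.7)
The plan is to derive this as an immediate consequence of Theorem \ref{thm: scanning map} and Theorem \ref{thm: scanning map K=R}, together with the general definition of a $\Z_2$-equivariant homotopy equivalence recalled in the Introduction. By that definition, it suffices to verify that $(S^H)^H$ is an ordinary homotopy equivalence for every subgroup $H\subset\Z_2$; since $\Z_2$ has only two subgroups, the trivial one and itself, only two assertions need to be checked.

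First I would confirm that $S^H$ is genuinely $\Z_2$-equivariant. Each horizontal scanning map $sc_{D+k\e}$ sends $(\xi_1,\dots,\xi_r)$ to the loop $x\mapsto(\xi_1\cap\overline{V}(x),\dots,\xi_r\cap\overline{V}(x))$, and the strips $\overline{V}(x)=[x-\epsilon,x+\epsilon]\times[-1,1]$ for $x\in\R$ are invariant under complex conjugation. Hence each $sc_{D+k\e}$ intertwines the conjugation actions on its source and target, and the identity $s_{D,D+\textbf{\textit{a}}}^{\R}=(s_{D,D+\textbf{\textit{a}}})^{\Z_2}$ recorded in (\ref{eq: stabZ2}) guarantees that the colimit map $S^H$ inherits $\Z_2$-equivariance. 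With equivariance in hand, the case $H=\{1\}$ is precisely the conclusion of Theorem \ref{thm: scanning map}.

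For the case $H=\Z_2$, the identifications $\Q^{D+\infty,\Sigma}_n(\C)^{\Z_2}=\Q^{D+\infty,\Sigma}_n(\R)$ and $(S^H)^{\Z_2}=S^H\vert\Q^{D+\infty,\Sigma}_n(\R)$ from Definition \ref{dfn: stability K=R} reduce the required statement to showing that the restricted map is a homotopy equivalence onto $\Omega\QQ_n(I^2,\sigma I^2)^{\Z_2}$. This is exactly Theorem \ref{thm: scanning map K=R}, whose hypothesis $(n,\rmin(\Sigma))\not=(1,2)$ is also assumed here, so the conclusion is immediate.

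There is no serious obstacle; the only point that merits care is verifying that every ingredient entering $S^H$---the strip decomposition, the stabilization maps, and the colimit---respects complex conjugation, so that the conjunction of the two cited theorems genuinely assembles into a single $\Z_2$-equivariant equivalence rather than two unrelated scalar statements.
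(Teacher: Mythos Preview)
Your proof is correct and follows the same approach as the paper: the paper's proof is a single sentence stating that the assertion follows from Theorems \ref{thm: scanning map} and \ref{thm: scanning map K=R}, and you have simply unpacked why these two theorems, together with the definition of $\Z_2$-equivariant homotopy equivalence given in (\ref{eq: XH}) and the identifications in Definition \ref{dfn: stability K=R}, yield the result. Your additional verification that $S^H$ is $\Z_2$-equivariant is a reasonable point of care that the paper leaves implicit.
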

\begin{proof}
The assertion follows from Theorems \ref{thm: scanning map}
and \ref{thm: scanning map K=R}.
\end{proof}
\section{The  stable result}\label{section: stability}
In this section we prove the stable theorem
(Theorem \ref{thm: V}).


\begin{dfn}
{\rm
From now on, let $\e =(a_1,\cdots,a_r)\in \N^r$ be any fixed an $r$-tuple of positive integers
such that $\sum_{k=1}^ra_k\nn_k={\bf 0}_m$.
\par
Let $D=(d_1,\cdots ,d_r)\in \N^r$ be an $r$-tuple of positive integers
such that
$\sum_{k=1}^rd_k\nn_k={\bf 0}_m.$
Then
it is easy to see that the following two diagram is homotopy commutative
for $\K=\R$ or $\C$:
$$
\begin{CD}
\Q^{D,\Sigma}_n(\K) @>j_{D,n,\K}>> 
\Omega \mathcal{Z}_{\KS}(\K^n,(\K^n)^*)\simeq
\Omega \mathcal{Z}_{\KS}(D^{d(\K)n},S^{d(\K)n-1})  
\\
@V{s^{\K}_{D,D+\e}}VV  \qquad \Vert @. 
\\
\Q^{D+\textit{\textbf{e}},\Sigma}_n(\K) 
@>j_{D+\textit{\textbf{e}},n,\K}>>
\Omega \mathcal{Z}_{\KS}(\K^n,(\K^n)^*)\simeq
\Omega \mathcal{Z}_{\KS}(D^{d(\K)n},S^{d(\K)n-1})
\end{CD}
$$
where
we set
\begin{equation} 
s^{\K}_{D,D+\e}=s^{}_{D,D+\e}
\quad
\mbox{ if }\K=\C.
\end{equation}
Hence, for $\K=\R$ or $\C$, we obtain the stabilized  map
\begin{equation}\label{eq: inclusion stab}
j_{D+\infty,n,\K}:
\Q^{D+\infty,\Sigma}_n(\K)
\to
\Omega \mathcal{Z}_{\KS}(D^{d(\K)n},S^{d(\K)n-1}),
\end{equation}
where we set
\begin{equation}
\dis j_{D+\infty,n,\K}=\lim_{t\to\infty}j_{D+t\e,D+(t+1)\e,\K}.
\end{equation}
}
\end{dfn}

The main purpose of this section is to prove the following result.
\begin{thm}\label{thm: V}
Let $\K=\R$ or $\C$, and let
$D=(d_1,\cdots ,d_r)\in \N^r$ be an $r$-tuple of positive integers
such that
$\sum_{k=1}^rd_k\nn_k={\bf 0}_m.$
Then the stabilized map
$$
j_{D+\infty,n,\K}:
\Q^{D+\infty,\Sigma}_n(\K)
\stackrel{\simeq}{\longrightarrow}
\Omega \mathcal{Z}_{\KS}(D^{d(\K)n},S^{d(\K)n-1})
$$
is a homotopy equivalence.
\end{thm}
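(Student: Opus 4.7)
The plan is to factor $j_{D+\infty,n,\K}$ through the stable horizontal scanning map and then identify the target of the scanning map with the polyhedral product. First I would invoke Theorem \ref{thm: scanning map} when $\K=\C$ and Corollary \ref{crl: scanning map K=R} (together with Theorem \ref{thm: scanning map K=R}) when $\K=\R$ to get the homotopy equivalence
$$
S^H:\Q^{D+\infty,\Sigma}_n(\K)\stackrel{\simeq}{\longrightarrow}\Omega W,
$$
where $W=\QQ_n(I^2,\sigma I^2)$ if $\K=\C$ and $W=\QQ_n(I^2,\sigma I^2)^{\Z_2}$ if $\K=\R$. The hypothesis $\sum_k d_k\nn_k={\bf 0}_m$ combined with the standing assumptions of the paper (in particular $(n,\rmin(\Sigma))\neq(1,2)$ in the real case, which is carried over from $($\ref{1.4}$)^{\dagger}$) ensures that the scanning equivalences apply.

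Next, I would construct a natural homotopy equivalence
$$
\Phi:W\stackrel{\simeq}{\longrightarrow}\mathcal{Z}_{\KS}(D^{d(\K)n},S^{d(\K)n-1})
$$
by applying the $F_n$-jet construction (\ref{eq: Fn}) pointwise. Given $(\xi_1,\dots,\xi_r)\in\QQ_n(I^2,\sigma I^2)$, at each real point $x\in(-1,1)$ the local multiplicity vector $(m_1(x),\dots,m_r(x))$ satisfies the condition that for every $\sigma\in I(\KS)$ not all $m_i(x)$ with $i\in\sigma$ reach $n$, which is exactly the condition that the local $F_n$-jet $(F_n(f_1)(x),\dots,F_n(f_r)(x))\in(\K^n)^r$ avoids $L_n^{\KS}(\K)$. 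Hence the image lies in $\mathcal{Z}_{\KS}(\K^n,(\K^n)^*)$, which is $\T^r$-equivariantly homotopy equivalent to $\mathcal{Z}_{\KS}(D^{d(\K)n},S^{d(\K)n-1})$ by Lemma \ref{Lemma: BP}(ii). That $\Phi$ is a weak equivalence should follow from a May--Segal style labeled configuration space argument: the configuration of labels in $\mathcal{Z}_{\KS}(\K^n,(\K^n)^*)$ models the target, and the $\KS$-multiplicity constraint carves out precisely the polyhedral product stratification.

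Finally, I would verify that $\Omega\Phi\circ S^H\simeq j_{D+\infty,n,\K}$ by unfolding the definitions (\ref{eq: def. jn}) for $\K=\C$ and (\ref{eq: the map jDR}) for $\K=\R$: both maps assemble the same $F_n$-jet data at each real point of $S^1=\R\cup\infty$, so the homotopy between them is produced by the standard reparameterization between scanning and loop evaluation, together with commutativity of the diagram (\ref{CD: Z2-equivarinat maps}) in the real case. Combined with the equivalences $S^H$ and $\Phi$, this yields the theorem.

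The main obstacle is the construction and verification that $\Phi$ is a homotopy equivalence. The labeled configuration space argument in the presence of the $\KS$-constraint requires a careful inductive argument, for instance along the filtration of $I(\KS)$ by cardinality (or equivalently along a skeletal filtration of $\KS$); each stratum contributes a May--Segal type approximation that must be spliced together consistently with the polyhedral product structure given by (\ref{eq: LSigma}). For $\K=\R$ the whole construction must moreover be carried out $\Z_2$-equivariantly on fixed-point sets, which works because complex conjugation commutes with the $F_n$-jet construction since the derivatives $f^{(k)}$ in (\ref{eq: Fnfi}) preserve real coefficients, but this equivariance must be tracked through every step of the argument.
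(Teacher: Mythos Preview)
There is a genuine gap in your plan. The proposed map $\Phi: W \to \mathcal{Z}_{\KS}(D^{d(\K)n},S^{d(\K)n-1})$ cannot be constructed as you describe, and in fact no such homotopy equivalence exists. Your description reconstructs from a configuration $(\xi_1,\dots,\xi_r)$ the monic polynomials $f_i$ with those roots and then evaluates $F_n(f_i)$; but elements of $W=\QQ_n(I^2,\sigma I^2)$ are \emph{equivalence classes} of configurations modulo points on $\sigma I^2=\partial I\times I$, and adding or removing a boundary root multiplies $f_i$ by a linear factor, which changes the jet $F_n(f_i)(x)$ at every interior point. So $\Phi$ is not well-defined on the quotient. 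More conceptually, the paper shows (Lemmas \ref{lmm: ev-rev}(ii) and \ref{lmm: the map wn}) that the intermediate space $\tilde F^{\K}_n(U)$ of not-necessarily-monic polynomial tuples satisfies $\tilde F^{\K}_n(U)\simeq \mathcal{Z}_{\KS}(\K^n,(\K^n)^*)$ via evaluation at $0$, while the root map $w_n^{\K}:\tilde F^{\K}_n(U)\to W$ is only a quasifibration with fibre $\T^r_{\K}$. That torus fibre records exactly the leading-coefficient ambiguity you are trying to ignore, so $W$ and the polyhedral product are \emph{not} homotopy equivalent in the way your outline requires.

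The paper's argument therefore never identifies $W$ with the polyhedral product directly. Instead it builds the commutative diagram (\ref{CD: main diagram}) (and (\ref{CD: main diagram K=R}) for $\K=\R$): the stabilized map $j_{D+\infty,n,\K}$ factors as $(\Omega ev_{\K})\circ\widehat{j_{D+\infty,n,\K}}$ with target $\Omega\tilde F^{\K}_n(U)$, and composing further with $\Omega p_{\K}$ one recovers $S^H$ (up to the equivalence $\Omega v_n^{\K}$ of Corollary \ref{crl: the map vn}). Since $S^H$ and $\Omega v_n^{\K}$ are equivalences, so is $\widehat{j_{D+\infty,n,\K}^{\p}}$; for $\K=\R$ the map $\Omega p_{\R}$ is itself an equivalence (the fibre $\T^r_{\R}\simeq(\Z_2)^r$ is discrete), while for $\K=\C$ one only obtains isomorphisms on $\pi_k$ for $k\geq 2$ (the fibre $\T^r_{\C}\simeq(S^1)^r$ contributes to low degrees), and the remaining low degrees are handled by the simple-connectivity of both source and target established in Lemma \ref{lmm: ev-rev}(i) and Lemma \ref{lmm: simplicial complex of KS(n)}(iii). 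This two-step comparison through $\tilde F^{\K}_n(U)$, with the torus fibre absorbed at the loop-space level, is precisely the missing ingredient in your outline; the ``May--Segal style'' argument you propose for $\Phi$ cannot substitute for it.
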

Before proving Theorem \ref{thm: V} we need the following definition and lemma.
\begin{dfn}
{\rm
Let $\K=\R$ or $\C$ as before.
Now we identify $\C=\R^2$ in a usual way and let us write
$U=\{w\in \C:\vert \mbox{Re}(w) \vert <1,\vert \mbox{Im}(w)\vert <1\}
=(-1,1)\times (-1,1)$ and $I=[-1,1]$.
\par\vspace{1mm}\par
(i)
For an open set $X\subset \C$, let $F^{\K}_n(X)$ 
denote the space
of $r$-tuples $(f_1(z),\cdots ,f_r(z))\in \K [z]^r$
of (not necessarily monic) polynomials satisfying the following condition
$(*)_{n,\R}$:
\begin{enumerate}
\item[$(*)_{n,\R}$]
 For any $\sigma =\{i_1,\cdots ,i_s\}\in I(\KS)$,
the polynomials
$f_{i_1}(z),\cdots ,f_{i_s}(z)$ have no common 
{\it real} roots of multiplicity $\geq n$ in $X$
(i.e. no common 
roots of multiplicity $\geq n$ in $X$).
 \end{enumerate}
\par
\par
(ii) Let $ev_{0,\K}:F^{\K}_n(U)\to \mathcal{Z}_{\KS}(\K^n,(\K^n)^*)$ 
denote the map given by evaluation at $0$, i.e.
\begin{equation}
ev_{0,\K}(f_1(z),\cdots ,f_r(z))=(F_n(f_1)(0),\cdots ,F_n(f_r)(0))
\end{equation}
for $(f_1(z),\cdots,f_r(z))\in F^{\K}_n(U)$,
where $F_n(f_i)(z)$ denotes the $n$-tuple of monic polynomials of the same degree $d_i$
given by (\ref{eq: Fnfi}).
\par\vspace{1mm}\par
(iii)
Let $\tilde{F}^{\K}_n(U)\subset F^{\K}_n(U)$ 
denote the subspace
of all
$(f_1(z),\cdots ,f_r(z))\in F^{\Sigma,\K}_n(U)$ 
such that no $f_i(z)$ is identically
zero.
\par
Let
$ev_{\K}:\tilde{F}^{\K}_n(U)\to \mathcal{Z}_{\KS}(\K^n,(\K^n)^*)$
denote
the map given by the restriction
\begin{equation}
ev_{\K}=ev_{0,\K}\vert \tilde{F}^{\K}_n(U).
\end{equation}
\par
It is easy to see that the following two equality holds:
\begin{equation}\label{eq: evj}
ev_{\R}=(ev_{\C})^{\Z_2}.
\end{equation}
\par
(iv)
Note that the group $\T^r_{\K}=(\K^*)^r$
acts freely on the space
$\tilde{F}^{\K}_n(U)$
in a natural way, and let
\begin{equation}\label{eq: qR}
p_{\K}:\tilde{F}^{\K}_n(U)\to \tilde{F}^{\K}_n(U)/\T^r_{\K}
\end{equation}
denote the natural projection, where 
$\tilde{F}^{\K}_n(U)/\T^r_{\K}$
denotes
the  corresponding orbit space.
}
\end{dfn}
\begin{lmm}\label{lmm: ev-rev}
Let $\XS$ be a simply connected non-singular toric variety 
such that 
the condition $($\ref{equ: condition dk}$)^*$ is satisfied.
\par
$\I$
If the condition $($\ref{1.4}$)^*$ is satisfied,
the space $\Q^{D+\infty,\Sigma}_n(\C)$ is simply connected.
Similarly,
if the condition $($\ref{1.4}$)^{\dagger}$ is satisfied,
the space $\Q^{D+\infty,\Sigma}_n(\R)$ is simply connected.
\par
$\II$
The map
$ev_{\K}:\tilde{F}^{\K}_n(U)\stackrel{\simeq}{\longrightarrow}
\mathcal{Z}_{\KS}(\K^n,(\K^n)^*)$
is a 
homotopy equivalence.
\end{lmm}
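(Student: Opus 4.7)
The plan is to reduce simple connectivity of the colimit to that of each finite stage. Since $\e\in\N^r$ has strictly positive entries, replacing $D$ by $D+k\e$ only increases $d_{\rm min}$, so the hypothesis $(\ref{1.4})^{*}$ (resp.\ $(\ref{1.4})^{\dagger}$) is inherited at every stage, and Corollary \ref{crl: 1-connected} yields that each $\Q^{D+k\e,\Sigma}_n(\K)$ is simply connected. Since $\pi_1$ commutes with sequential colimits taken along continuous maps (any loop, together with any null-homotopy of it, is supported at some finite stage), we conclude $\pi_1(\Q^{D+\infty,\Sigma}_n(\K))=1$.

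\textbf{Part (II): set-up and fibers.} For the evaluation map, the plan is to show that $ev_{\K}$ is a quasi-fibration with contractible fibers. The key algebraic input is the $\K$-linear splitting $\K[z]=V_{n-1}\oplus z^{n}\K[z]$, where $V_{n-1}\subset \K[z]$ denotes the subspace of polynomials of degree $<n$. Writing $f_i=p_i+z^{n} g_i$ with $p_i\in V_{n-1}$, Leibniz's rule gives $F_n(f_i)(0)=F_n(p_i)(0)$, and the map $\Phi:V_{n-1}\to \K^n$ sending $p\mapsto F_n(p)(0)$ is a $\K$-linear isomorphism (its matrix in the basis $1,z,\ldots,z^{n-1}$ is lower-triangular with nonzero diagonal). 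Up to the identification induced by $\Phi^r$, the map $ev_{\K}$ is projection onto the Taylor parts $(p_1,\ldots,p_r)$. For fixed $y\in\mathcal{Z}_{\KS}(\K^n,(\K^n)^*)$, the fiber $ev_{\K}^{-1}(y)$ is the set of tuples $(g_1,\ldots,g_r)\in \K[z]^r$ such that, setting $f_i:=\Phi^{-1}(y_i)+z^{n}g_i$, each $f_i$ is a nonzero polynomial (automatic when $y_i\neq 0$; forcing $g_i\neq 0$ when $y_i=0$) and the tuple satisfies condition $(*)_{n,\R}$ on $U$. This fiber is the complement of a positive-codimension closed subvariety in an infinite-dimensional affine space, hence weakly contractible.

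\textbf{Main obstacle.} The hardest step will be to upgrade this fiberwise picture to a genuine homotopy equivalence, i.e., to verify that $ev_{\K}$ is a quasi-fibration. The plan is to stratify $\mathcal{Z}_{\KS}(\K^n,(\K^n)^*)$ by the vanishing pattern $T=\{i:y_i=0\}\subset [r]$ (constrained by $\sigma\not\subset T$ for every $\sigma\in I(\KS)$, since $y$ lies in the polyhedral product), show that the fiber type is locally constant on each stratum, and apply the standard quasi-fibration criterion inductively from the deepest to the shallowest stratum. Equivalently, one can construct a concrete homotopy inverse $s_{\K}(y):=(\Phi^{-1}(y_1)+h,\ldots,\Phi^{-1}(y_r)+h)$ for a suitably chosen $h\in z^{n}\K[z]\setminus\{0\}$, and then deform the identity on $\tilde{F}^{\K}_n(U)$ to $s_{\K}\circ ev_{\K}$ through a family of paths avoiding the discriminant locus where common real zeros of multiplicity $\geq n$ in $U$ would arise; this generic-position/transversality argument is in the same spirit as the computations carried out in \cite{KY10} and \cite{KY12}. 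For $\K=\R$ every choice can be made $\Z_2$-equivariantly via (\ref{eq: evj}), so that case follows from the complex one by passing to $\Z_2$-fixed points.
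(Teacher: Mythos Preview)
Part (I) is correct and is exactly the paper's argument: each finite stage is simply connected by Corollary~\ref{crl: 1-connected}, hence so is the colimit.

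For Part (II) you take a different and harder route. The paper never analyzes fibers or invokes a quasi-fibration criterion. Instead it works first with the larger space $F^{\K}_n(U)$ (where identically-zero coordinates are allowed) and uses the scaling homotopy $(f_1,\dots,f_r)\mapsto(f_1(tz),\dots,f_r(tz))$, $t\in[0,1]$: since $U$ is star-shaped about $0$, any forbidden common real root of the rescaled tuple would pull back to one for the original tuple, so the homotopy stays in $F^{\K}_n(U)$. Together with the explicit section $i_0$ (sending each $\mathbf{b}\in\K^n$ to the degree-$(n-1)$ polynomial with prescribed jet) this exhibits $ev_{0,\K}$ as a homotopy equivalence directly. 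The passage from $F^{\K}_n(U)$ to $\tilde F^{\K}_n(U)$ is then a one-line appeal to Eells--Kuiper \cite{EK}: the locus where some $f_i\equiv 0$ has infinite codimension, so removing it does not change the homotopy type.

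Your approach, by contrast, has two genuine gaps. First, the quasi-fibration property is only asserted; the Dold--Thom verification over your stratification by $T=\{i:y_i=0\}$ is precisely the kind of delicate bookkeeping the scaling trick avoids, and you have not carried it out. Second, your proposed global section $s_{\K}(y)=(\Phi^{-1}(y_1)+h,\dots,\Phi^{-1}(y_r)+h)$ with a \emph{fixed} $h\in z^{n}\K[z]$ cannot land in $\tilde F^{\K}_n(U)$ for all $y$: for instance with $h=z^{n}$, taking every $y_i$ equal to the $0$-jet of $(z-\alpha)^{n}-z^{n}$ for some $\alpha\in(-1,1)\setminus\{0\}$ yields $s_{\K}(y)_i=(z-\alpha)^n$ for all $i$, a tuple with a common real root of multiplicity $n$ at $\alpha\in U$. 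No single $h$ works uniformly, so the homotopy-inverse route also requires substantially more than you have written. The scaling $z\mapsto tz$ is exactly the device that keeps the deformation inside the good locus for free; I would recommend adopting it.
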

\begin{proof}
(i)
The assertion easily follows from Corollary \ref{crl: 1-connected}.
\par
(ii)
For each $\textbf{\textit{b}}=(b_0,b_1,\cdots ,b_{n-1})\in \K^n$, let
$f_{\textbf{\textit{b}}}(z)\in \K [z]$
denote the polynomial of degree $\leq n$ defined by
\begin{equation}
f_{\textbf{\textit{b}}}(z)=b_0+\sum_{k=1}^{n-1}\frac{b_k-b_0}{k!}z^k.
\end{equation}
Let $i_0:\mathcal{Z}_{\KS}(\K^n,(\K^n)^*)\to F^{\K}_n(U)$ 
be the inclusion map given by
\begin{equation}
i_0(\textbf{\textit{b}}_1,\cdots ,\textbf{\textit{b}}_r)
=\big(f_{\textbf{\textit{b}}_1}(z),\cdots ,f_{\textbf{\textit{b}}_r}(z)\big)
\end{equation}
for
$(\textbf{\textit{b}}_1,\cdots ,\textbf{\textit{b}}_r)
\in \mathcal{Z}_{\KS}(\K^n,(\K^n)^*).$ 
Since the degree of each polynomial $f_{\textbf{\textit{b}}_1}(z)$ has at most
$n-1$, it has no root of multiplicity $\geq n$.
Thus, the map $i_0$ is well-defined, and  clearly the equality
$ev_0\circ i_0=\mbox{id}$ holds.
\par
Let $f: F^{\K}_n(U)\times [0,1]\to
F^{\K}_n(U)$ be
the homotopy given by
$$
f((f_1,\cdots ,f_t),t)=(f_{1,t}(z),\cdots ,f_{r,t}(z)),
$$
where
$f_{i,t}(z)=f_i(tz)$.
This gives a homotopy between the map
$i_0\circ ev_{0,\K}$ and the identity map, and this proves that 
the map
$$
ev_{0,\K}:F^{\K}_n(U)
\stackrel{\simeq}{\longrightarrow}
\mathcal{Z}_{\KS}(\K^n,(\K^n)^*)
$$ is a deformation retraction.
Since $F^{\K}_n(U)$ is an infinite dimensional manifold and
$\tilde{F}^{\K}_n(U)$ is a closed submanifold of $F^{\K}_n(U)$ of infinite codimension,
it follows from \cite[Theorem 2]{EK} that
the inclusion
\begin{equation}
i^{\Sigma,\K}_n:\tilde{F}^{\K}_n(U)
\stackrel{\simeq}{\longrightarrow} F^{\K}_n(U)
\end{equation}
 is a homotopy equivalence.
Hence the restriction 
$ev_{\K}=ev_{0,\K}\circ i^{\Sigma,\K}_n$ is also  
a homotopy equivalence.
\end{proof}
\begin{dfn}
{\rm
Note that $(\overline{U},\sigma\overline{U})=(I^2,\sigma I^2)=(I\times I,\partial I\times I)$.
Let 
\begin{equation}
\begin{cases}
w_n^{\C}:\tilde{F}^{\C}_n(U) \to 
\QQ_n(\overline{U},\sigma \overline{U})
=\QQ_n(I^2,\sigma I^2)
\\
w_n^{\R}:\tilde{F}^{\R}_n(U) \to 
\QQ_n(\overline{U},\sigma \overline{U})^{\Z_2}
=\QQ_n(I^2,\sigma I^2)^{\Z_2}
\end{cases}
\end{equation}
denote the natural maps which
assigns to an $r$-tuple  $(f_1(z),\cdots ,f_r(z))\in \tilde{F}^{\K}_n(U)$
($\K=\C$ or $\R$) 
the $r$-tuple of their configurations represented by
their real roots which lie 
in 
$\overline{U}=I^2$.
These maps clearly induce the maps
\begin{equation}
\begin{cases}
v_n^{\C}:\tilde{F}^{\C}_n(U)/\T^r_{\C} \to 
\QQ_n(\overline{U},\sigma \overline{U})
=\QQ_n(I^2,\sigma I^2)
\\
v_n^{\R}:\tilde{F}^{\R}_n(U)/\T^r_{\R} \to 
\QQ_n(\overline{U},\sigma \overline{U})^{Z_2}
=\QQ_n(I^2,\sigma I^2)^{\Z_2}
\end{cases}
\end{equation}
such that the following diagram is commutative:
\begin{equation*}
\begin{CD}
\tilde{F}^{\C}_n(U) @>w_n^{\C}>> \QQ_n(\overline{U},\sigma \overline{U})
@<<\supset< \QQ_n(\overline{U},\sigma \overline{U})^{\Z_2} 
@<w_n^{\R}<< \tilde{F}^{\R}_n(U)
\\
@V{p_{\R}^{\C}}VV \Vert @. \Vert @. @V{p_{\R}^{\R}}VV
\\
\tilde{F}^{\C}_n(U)/\T^r_{\C} @>v_n^{\C}>> \QQ_n(\overline{U},\sigma \overline{U})
@<<\supset< \QQ_n(\overline{U},\sigma \overline{U})^{\Z_2} 
@<v_n^{\R}<< \tilde{F}^{\R}_n(U)/\T^r_{\R}
\end{CD}
\end{equation*}
}
\end{dfn}
\begin{lmm}\label{lemma: fiber}
Any fiber of the map $w_n^{\K}$ is homotopy equivalent to the space
$\T^r_{\K}$.
\end{lmm}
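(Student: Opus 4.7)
My plan is to identify the fiber as an explicit product $Q_\K^r$, where $Q_\K$ is a space of ``residual'' polynomials with no real roots in $(-1,1)$, and then to show that $Q_\K$ deformation retracts onto $\K^*$ via the scaling $g(z)\mapsto g(tz)$.

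Fix a point $\xi=(\xi_1,\dots,\xi_r)$ in the image of $w_n^\K$. By construction of $w_n^\K$, each $\xi_i$ is supported at real points of $\overline U\setminus\sigma\overline U$, i.e., on the open interval $(-1,1)\subset\R$; write $\xi_i=\sum_j m_{ij}z_{ij}$ with distinct $z_{ij}\in(-1,1)$ and $m_{ij}\geq 1$, and let $p_i(z)=\prod_j(z-z_{ij})^{m_{ij}}\in\K[z]$. Since each $p_i$ is monic, any $f_i\in\K[z]$ whose configuration of real roots in $(-1,1)$ equals $\xi_i$ factors uniquely as $f_i=p_ig_i$ with $g_i\in\K[z]\setminus\{0\}$ having no real roots in $(-1,1)$. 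The condition $(*)_{n,\R}$ on $(p_1g_1,\dots,p_rg_r)$ concerns only common real roots in $U$; since each $g_i$ is root-free there, it reduces to the condition $(*)_n^\Sigma$ already built into $\xi\in\QQ_n(\overline U,\sigma\overline U)$. Setting
$$
Q_\K=\{g\in\K[z]\setminus\{0\}:g\text{ has no real root in }(-1,1)\},
$$
the map $(g_1,\dots,g_r)\mapsto(p_1g_1,\dots,p_rg_r)$ is then a homeomorphism from $Q_\K^r$ onto the fiber $(w_n^\K)^{-1}(\xi)$, using the colimit topology $\K[z]=\bigcup_d\K[z]_{\leq d}$.

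It remains to show that the inclusion $\K^*\hookrightarrow Q_\K$ of the nonzero constants is a deformation retraction. Define $H:Q_\K\times[0,1]\to Q_\K$ by $H(g,t)(z)=g(tz)$. For $t\in(0,1]$, a real root of $H(g,t)$ in $(-1,1)$ would produce a real root of $g$ in $(-t,t)\subset(-1,1)$, contradicting $g\in Q_\K$; for $t=0$, $H(g,0)$ is the constant $g(0)$, and $g(0)\neq 0$ since $0\in(-1,1)$. Hence $H$ stays inside $Q_\K$, is the identity on $\K^*$ for every $t$, and deformation retracts $Q_\K$ onto $\K^*$. Consequently $(w_n^\K)^{-1}(\xi)\simeq Q_\K^r\simeq(\K^*)^r=\T^r_\K$. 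For $\K=\R$ the argument applies without change: a point of $\QQ_n(\overline U,\sigma\overline U)^{\Z_2}$ in the image of $w_n^\R$ automatically has real support. The main bookkeeping point is the continuity of polynomial multiplication by the fixed $p_i$ and of the scaling $g\mapsto g(t\cdot)$ in the direct-limit topology, but both are linear on each finite-dimensional $\K[z]_{\leq d}$ and so pose no problem.
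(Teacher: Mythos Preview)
Your proof is correct and follows essentially the same route as the paper's: identify the fiber as an $r$-fold product of a space of ``residual'' polynomials and deformation-retract that space onto $\K^*$ via the scaling homotopy $g(z)\mapsto g(tz)$, with evaluation at $0$ giving the retraction. The only discrepancy is in the description of the fiber. You read the definition of $w_n^\K$ literally (it says ``real roots'') and take the residual space $Q_\K$ to consist of nonzero polynomials with no real root in $(-1,1)$; the paper's own proof, however, identifies the fiber with the space $fib(r)$ of $r$-tuples having no (complex) root in $U$, which corresponds to $w_n^\K$ recording \emph{all} roots in $\overline U$. The latter reading is the one consistent with the surrounding arguments (for instance, surjectivity onto $\QQ_n(I^2,\sigma I^2)$ and the homotopy equivalence of $v_n^{\C}$), so the word ``real'' in the definition of $w_n^\K$ appears to be a slip. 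Either way the scaling retraction works identically, and your argument goes through unchanged with $Q_\K$ replaced by the space of nonzero polynomials with no root in $U$.
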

\begin{proof}
Any fiber of the map $w_n^{\K}$ is homeomorphic to the space
$fib(r)$ consisting of all $r$-tuples
$(f_1(z),\cdots ,f_r(z))\in \K[z]^r$ of $\K$-coefficients polynomials such that
each polynomial $f_i(z)$ has no root in $U$.
It suffices to show that there is a homotopy equivalence
\begin{equation}\label{eq: fib(r)}
fib(r)\simeq \T^r_{\K}.
\end{equation}
First define the inclusion map $j_0:\T^r_{\K}\to fib(r)$ by
$j_0(\textit{\textbf{x}})=(x_1,\cdots ,x_r)$
for $\textit{\textbf{x}}=(x_1,\cdots ,x_r)\in \T^r_{\K}$.
Next,
let $f=(f_1(z),\cdots ,f_r(z))\in fib(r)$ be any element.
Since $0\in U$,
$(f_1(0),\cdots ,f_r(0))\in \T^r_{\K}$.
Hence, one can define the evaluation map
$\epsilon_0:fib(r)\to \T^r_{\K}$
by
$\epsilon_0({\rm f})=(f_1(0),\cdots ,f_r(0))$ for
${\rm f}=(f_1(z),\cdots ,f_r(z))\in fib(r)$.
It is easy to see that $\epsilon_0\circ j_0=\mbox{id}_{\T^r_{\K}}.$
\par
Now consider the map $j_0\circ \epsilon_0$.
Note that if a polynomial $g(z)\in \K [z]$ has a root $\alpha \in \C\setminus U$ and
$0<t\leq 1$, the polynomial $g(tz)$ has a root $\alpha/t\in \C\setminus U$.
Thus, one can define the homotopy
$
F:fib(r)\times [0,1]\to fib(r)
$ by
$F({\rm f},t)=(f_1(tz),\cdots ,f_r(tz))$
for $({\rm f},t)=((f_1(z),\cdots ,f_r(z)),t)\in fib(r)\times [0,1].$
It is easy to see that the map $F$ gives a homotopy between
the maps
$j_0\circ \epsilon_0$ and $\mbox{id}_{fib(r)}$.
Hence, we see that the map 
$\epsilon_0:fib(r)\stackrel{\simeq}{\longrightarrow}\T^r_{\K}$
is a desired homotopy equivalence.
\end{proof}

\begin{lmm}\label{lmm: the map wn}
The map
$ 
w_n^{\C}:\tilde{F}^{\C}_n(U) \to 
\QQ_n(\overline{U},\sigma \overline{U})
$ 
is a quasifibration with fiber
$\T^r_{\C}$.
Similarly,
the map
$ 
w_n^{\R}:\tilde{F}^{\R}_n(U) \to 
\QQ_n(\overline{U},\sigma \overline{U})^{\Z_2}
$ 
is a quasifibration with fiber
$\T^r_{\R}$.
\end{lmm}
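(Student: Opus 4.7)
The plan is to apply the Dold quasifibration criterion to a suitable filtration of the base. Since by Lemma~\ref{lemma: fiber} every fiber of $w_n^{\C}$ already has the homotopy type of $\T^r_{\C}$ (and every fiber of $w_n^{\R}$ the homotopy type of $\T^r_{\R}$, by the same argument with complex-conjugate-symmetric polynomials), the only real content is the quasifibration property itself. I would first handle the complex case; the real case follows by taking $\Z_2$-fixed points, provided that the filtration and the deformations below are chosen $\Z_2$-equivariantly, which they will be.

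Let $B := \QQ_n(\overline{U}, \sigma\overline{U})$ and filter it by total multiplicity: let $B_k \subset B$ be the subspace of those $r$-tuples of configurations whose combined total multiplicity in the interior $U$ is at most $k$. Set $E_k := (w_n^{\C})^{-1}(B_k)$ and $p_k := w_n^{\C}\vert_{E_k}$. Over the top stratum $B_k \setminus B_{k-1}$ any polynomial can be locally parameterized by its leading coefficient, its roots in $\overline{U}$, and a residual factor with roots outside $\overline{U}$; as in the proof of Lemma~\ref{lemma: fiber}, this presents $p_k$ as a locally trivial bundle with fiber of the homotopy type of $\T^r_{\C}$ over $B_k \setminus B_{k-1}$. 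This is the base case of an induction on $k$, and a standard colimit argument (direct limits preserve quasifibrations) reduces the problem to proving each $p_k$ is a quasifibration.

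For the inductive step, assuming $p_{k-1}$ is a quasifibration, I would construct the Dold data across the inclusion $B_{k-1} \subset B_k$. Take $V \subset B_k$ to be the open neighborhood of $B_{k-1}$ consisting of configurations having at least one point within some small horizontal distance of the vertical sides $\sigma\overline{U}$, and define a deformation $h_t: V \to V$ that slides any such near-boundary point outward through $\sigma\overline{U}$, so that $h_1(V) \subset B_{k-1}$. The covering deformation $H_t: p_k^{-1}(V) \to p_k^{-1}(V)$ performs the same horizontal translation on the corresponding roots of the polynomials (chosen symmetrically in the real axis, so as to be $\Z_2$-equivariant). At time $t=1$, the map $H_1$ carries a fiber over $v \in V$ into the fiber over $h_1(v) \in B_{k-1}$ by ``forgetting'' a root-factor $(z - \alpha_0)$ with $\alpha_0 \notin \overline{U}$; up to homotopy this is the translation by $(-\alpha_0)^{\pm 1}$ in the $\T^r_{\C}$-coordinate of the fiber, hence a self-homotopy-equivalence. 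Dold's criterion is thereby verified, so $p_k$ is a quasifibration.

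The main obstacle is precisely this fiberwise equivalence verification in the inductive step: one must carefully check that sliding roots outward through $\sigma\overline{U}$ at the level of polynomials really does induce a homotopy equivalence on the fiber $\T^r_{\C}$, and does so in a way which is compatible with complex conjugation so that the $\Z_2$-fixed-point version yields the statement for $w_n^{\R}$ with fiber $\T^r_{\R}$. This is essentially the argument carried out in \cite{Se}, \cite{Gu1} and \cite[\S 5]{KY10}, adapted to the polyhedral-product/toric setting; in the present setup it works unchanged because the condition $(\ast)_{n,\R}$ defining $\tilde{F}^{\K}_n(U)$ is preserved under horizontal translation of roots, and the $\T^r_{\K}$-action on the fibers is evidently real.
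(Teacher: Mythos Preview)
Your approach is essentially the paper's: apply the Dold--Thom criterion to a filtration of the base and defer the verification to \cite{Se}, \cite{Gu1}, and \cite[\S 5]{KY10}. The only structural difference is the filtration itself: the paper indexes by the $r$-tuple $(d_1,\ldots,d_r)$ recording, for each color $k$, the degree of the \emph{real} part $\xi_k\cap\R\cap(\overline{U}\setminus\sigma\overline{U})$, whereas you use a single total-multiplicity index. The paper's choice is tailored to the condition $(\ast)^\Sigma_n$ (which constrains only real common roots), but your coarser filtration works equally well for the Dold--Thom argument, and in fact is closer to Segal's original formulation.

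There is one genuine slip in your inductive step. Your set $V$, described as ``configurations having at least one point within some small horizontal distance of $\sigma\overline{U}$'', is \emph{not} a neighborhood of $B_{k-1}$: a configuration with $k-1$ interior points all sitting near the centre of $U$ lies in $B_{k-1}$ but has no point near $\sigma\overline{U}$, so it is not in your $V$. The correct choice is $V=\{\xi\in B_k:\xi\text{ has at most }k-1\text{ points outside an }\epsilon\text{-collar of }\sigma\overline{U}\}$; this contains all of $B_{k-1}$, is open in $B_k$, and your horizontal-push deformation $h_t$ (and its $\Z_2$-equivariant lift $H_t$) then sends $V$ into $B_{k-1}$ exactly as you intend. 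With that correction the rest of your outline, including the fiberwise-equivalence check via the $\T^r_{\K}$-identification of Lemma~\ref{lemma: fiber}, is sound.
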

\begin{proof}
Since the proof is completely analogous, we give the proof only for the map
$w_n^{\C}$.
The assertion may be proved by using the well-known Dold-Thom criterion.
Recall that
 the base space $B=\QQ_n(\overline{U},\sigma \overline{U})$ consists of $r$-tuple of divisors (or configurations)  
 $(\xi_1,\cdots ,\xi_r)$ 
 satisfying the condition
\begin{enumerate}
\item[$(\dagger)_{\Sigma}$]
The configuration
$(\cap_{k\in \sigma}\xi_k)\cap \R \cap (\overline{U}\setminus \sigma \overline{U})$ does not contains no points of multiplicity $\geq n$
for any $\sigma\in I(\KS)$.
\end{enumerate}
For each $r$-tuple $(d_1,\cdots ,d_r)\in (\Z_{\geq 0})^r$ of non-negative integers,
we denote by
$B_{\leq d_1,\cdots, \leq d_r}$ the subspace of $B$ consisting of all
$r$-tuples $(\xi_1,\cdots ,\xi_r)\in B$ satisfying the condition
\begin{equation}
\deg (\xi_k\cap \R \cap (\overline{U}\setminus \sigma \overline{U}))\leq d_k
\quad
\mbox{ for each }1\leq k\leq r. 
\end{equation}
We filter the base space $B$ by an increasing family of subspaces
$\{B_{\leq d_1,\cdots, \leq d_r}\}$.
It suffices to prove that  each restriction
\begin{equation}\label{eq: quasi-fib over subspace}
w_n\vert w_n^{-1}(B_{\leq d_1,\cdots, \leq d_r}):w_n^{-1}(B_{\leq d_1,\cdots, \leq d_r})\to 
B_{\leq d_1,\cdots, \leq d_r}
\end{equation}
is a quasifibration.
Its proof is essentially  completely analogous to that of \cite[Lemma 5.13]{KY10}
(cf. \cite[Lemmas 3.3, 3.4]{Se}).
The difference  is only the condition which we treated.
In the case of \cite[Lemma 5.13]{KY10}, we consider the $m$-tuple
$(\xi_1,\cdots ,\xi_m)$ of configurations which satisfies 
the condition $(\dagger)_1$,
where
\begin{enumerate}
\item[$(\dagger)_1$]
The configuration
$(\cap_{k=1}^m\xi_k)\cap \R \cap (\overline{U}\setminus \sigma \overline{U})$ does not contains no points of multiplicity $\geq n$.
\end{enumerate}
On the other hand, in our case, we need to consider $r$-tuple
$(\xi_1,\cdots ,\xi_r)$ of configurations satisfying the condition 
$(\dagger)_{\Sigma}$.
If we replace by the condition $(\dagger)_{\Sigma}$ in the proof of \cite[Lemma 5.13]{KY10},
we can prove that each restriction (\ref{eq: quasi-fib over subspace}) is a quasifibration  
 by the completely identical similar way.
 So we omit the detail.
\end{proof}
\begin{crl}\label{crl: the map vn}
The map
$v_n^{\C}:\tilde{F}^{\C}_n(U)/\T^r_{\C} 
\stackrel{\simeq}{\longrightarrow} 
\QQ_n(\overline{U},\sigma \overline{U})$
is a homotopy equivalence.
Similarly,
the map
$v_n^{\R}:\tilde{F}^{\R}_n(U)/\T^r_{\R} 
\stackrel{\simeq}{\longrightarrow} 
\QQ_n(\overline{U},\sigma \overline{U})^{\Z_2}$
is also a homotopy equivalence.
\end{crl}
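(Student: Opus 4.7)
The plan is to establish $v_n^{\K}$ as a homotopy equivalence by comparing the long exact sequences of two quasifibrations sitting on top of the same space $\tilde{F}^{\K}_n(U)$. First I would observe that the $\T^r_{\K}$-action on $\tilde{F}^{\K}_n(U)$ is free (no nonzero scalar in $\K^*$ can fix a nonzero polynomial), so $p_{\K}$ is a principal $\T^r_{\K}$-bundle, in particular a Hurewicz fibration with fiber $\T^r_{\K}$. Since scaling each polynomial by a nonzero constant does not change its zero locus, $w_n^{\K}$ is $\T^r_{\K}$-invariant and factors through the quotient, giving $w_n^{\K} = v_n^{\K} \circ p_{\K}$; by Lemma \ref{lmm: the map wn}, the map $w_n^{\K}$ is a quasifibration with fiber $\T^r_{\K}$ (respectively $\T^r_{\R}$ in the real case).

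Next I would compare the resulting long exact sequences of homotopy groups. These two ladders share their leftmost terms $\pi_*(\T^r_{\K})$ and their middle terms $\pi_*(\tilde{F}^{\K}_n(U))$, identified via the identity, while the comparison map on the third terms is precisely $(v_n^{\K})_*\colon \pi_*(\tilde{F}^{\K}_n(U)/\T^r_{\K}) \to \pi_*(\QQ_n(I^2,\sigma I^2))$ (or its $\Z_2$-fixed variant when $\K = \R$). The five lemma then yields that $(v_n^{\K})_*$ is an isomorphism on all homotopy groups, and Whitehead's theorem — both source and target having the homotopy type of CW complexes — gives the conclusion.

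The main technical point, to be verified before invoking the five lemma, is that the natural map on fibers, namely the inclusion of a single $\T^r_{\K}$-orbit (a fiber of $p_{\K}$) into the corresponding fiber of $w_n^{\K}$, is a homotopy equivalence. I would establish this by a parameterized version of the argument in Lemma \ref{lemma: fiber}: over a configuration $(\xi_1,\ldots,\xi_r)$, fix once and for all polynomials $h_i(z)$ whose real roots in $U$ realize $\xi_i$ with the prescribed multiplicities, and write any element of the fiber uniquely as $f_i(z) = h_i(z) g_i(z)$ with $g_i$ having no real roots in $U$ and $g_i(0) \in \K^*$. The homotopy $g_i(z) \mapsto g_i(tz)$ for $t \in [0,1]$ then retracts this fiber onto the $\T^r_{\K}$-orbit of $(g_1(0) h_1, \ldots, g_r(0) h_r)$, exhibiting the orbit as a deformation retract. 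In the real case $\K = \R$, choosing each $h_i$ with real coefficients makes the whole construction $\Z_2$-equivariant, so the identical argument applies. This fiber-comparison is the only nontrivial step; the rest of the argument is purely formal.
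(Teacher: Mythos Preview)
Your proof is correct and takes essentially the same route as the paper: both compare the principal $\T^r_{\K}$-bundle $p_{\K}$ with the quasifibration $w_n^{\K}$ of Lemma~\ref{lmm: the map wn} over the common total space $\tilde{F}^{\K}_n(U)$, using that both have fiber $\simeq \T^r_{\K}$. The paper packages the comparison via \cite[Lemma~2.1]{CMN}, obtaining a $3\times 3$ diagram of fibration sequences whose top row $\T^r_{\K} \xrightarrow{=} \T^r_{\K} \to *$ forces the homotopy fiber of $v_n^{\K}$ to be contractible; your direct five-lemma argument, together with the explicit check that the orbit-to-fiber inclusion is an equivalence, is precisely what underlies that diagram and in fact makes explicit a step the paper leaves implicit.
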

\begin{proof}
Since the proof is completely analogous, we give the proof only for the map
$v_n^{\C}$.
Let $F_{n}$ denote the homotopy fiber of the map $w_n^{\C}$.
It follows from \cite[Lemma 2.1]{CMN} that
there is the following homotopy commutative diagram
\begin{equation}
\begin{CD}
\T^r_{\C} @>>=> \T^r_{\C} @>>> *
\\
\Vert @. @VVV @VVV
\\
\T^r_{\C} @>>> \tilde{F}^{\C}_n(U) @>w_n^{\C}>>
\QQ_n(\overline{U},\sigma\overline{U})
\\
@VVV @V{p_{\C}}VV \Vert @.
\\
F_{n} @>>>
\tilde{F}^{\C}_n(U)/\T^r_{\C}
@>v_n^{\C}>>
\QQ_n(\overline{U},\sigma\overline{U})
\end{CD}
\end{equation}
where all above vertical and horizontal sequences are fibration sequences.
By this diagram, we easily see that $F_{n}$ is contractible.
Thus, 
$v_n^{\C}$ is a homotopy equivalence.
\end{proof}
\par\vspace{1mm}\par
Now we can give the proof of Theorem \ref{thm: V}.
\begin{proof}[Proof of Theorem \ref{thm: V}]
First, we shall prove the assertion for case $\K=\C$.
\par\noindent
It follows from Lemma \ref{lmm: ev-rev} and Lemma \ref{lmm: simplicial complex of KS(n)}
 that
two spaces $\Q^{D+\infty,\Sigma}_n(\C)$ and 
$\Omega \mathcal{Z}_{\KS}(D^{2n},S^{2n-1})$ are simply connected.
Thus, it suffices to prove that the map $j_{D+\infty,n,\C}$ induces an isomorphism
$$
(j_{D+\infty,n,\C})_*:\pi_k(\Q^{D+\infty,\Sigma}_n(\C))
\stackrel{\cong}{\longrightarrow}
\pi_k(\Omega \mathcal{Z}_{\KS}(D^{2n},S^{2n-1}))
\quad
\mbox{for any $k\geq 2$.}
$$
Let us identify $\C =\R^2$ and
let $U=(-1,1)\times (-1,1)$ as before.
Define the scanning map
$scan: \tilde{F}^{\C}_n(\C)\to \Map (\R,\tilde{F}^{\C}_n(U))$
by
\begin{equation} 
scan (f_1(z),\cdots ,f_r(z))(w)=(f_1(z+w),\cdots ,f_r(z+w))
\end{equation}
for $(f_1(z),\cdots ,f_r(z)),w)\in \tilde{F}^{\C}_n(\C)\times \R$, and
consider the diagram
$$
\begin{CD}
\tilde{F}^{\C}_n(U) @>ev_{\C}>\simeq>\mathcal{Z}_{\KS}(D^{2n},S^{2n-1})
\\
@V{p_{\C}}VV @. 
\\
\tilde{F}^{\C}_n(U)/\T^r_{\C} 
@>v_n^{\C}>\simeq> 
\QQ_n (\overline{U},\sigma \overline{U})
\end{CD}
$$
This induces the commutative diagram below
$$
\begin{CD}
\tilde{F}^{\C}_n(\C) @>scan>> \Map (\R, \tilde{F}^{\C}_n(U)) 
@>(ev_{\C})_{\#}>\simeq> 
\Map (\R,\mathcal{Z}_{\KS}(D^{2n},S^{2n-1}))
\\
@V{p_{\C}}VV @V{(p_{\C})_{\#}}VV @.
\\
\tilde{F}^{\C}_n(\C)/\T^r_{\C} @>scan>>
\Map (\R, \tilde{F}^{\C}_n(U)/\T^r_{\C}) 
@>(v_n^{\C})_{\#}>{\simeq}>
\Map (\R, \QQ_n (\overline{U},\sigma \overline{U}))
\end{CD}
$$
Observe that $\Map (\R,\cdot )$ can be replaced by $\Map^* (S^1,\cdot)$
by extending  from $\R$ to $S^1=\R\cup \infty$
(as base-point preserving maps).
Thus 
by setting
$$
\begin{cases}
\widehat{j_{D,n,\C}}:\Q^{D,\Sigma}_n(\C)
 \stackrel{\subset}{\longrightarrow} 
 \tilde{F}^{\C}_n(\C) 
\stackrel{scan}{\longrightarrow}
\Map^*(S^1,\tilde{F}^{\C}_n(U))
=\Omega \tilde{F}^{\C}_n(U)
\\
\widehat{j_{D,n,\C}^{\p}}:E^{\Sigma,\R}_{D,n}(\C)
\stackrel{\subset}{\longrightarrow}
\tilde{F}^{\C}_n(\C) \stackrel{scan}{\longrightarrow}
\Map^*(S^1,\tilde{F}^{\C}_n(U)/\T^r_{\C})
=\Omega (\tilde{F}^{\C}_n(U)/\T^r_{\C})
\end{cases}
$$
we obtain the following commutative diagram
\begin{equation}\label{CD: main1}
\begin{CD}
\Q^{D,\Sigma}_n(\C)
@>\widehat{j_{D,n,\C}}>> \Omega \tilde{F}^{\C}_n(U) 
@>\Omega ev_{\C}>\simeq>
\Omega \mathcal{Z}_{\KS}(D^{2n},S^{2n-1})
\\
@V{\cong}VV @V{\Omega p_{\C}}V{}V @. 
\\
\QQ_{D,n}(\C)
@>\widehat{j_{D,n,\C}^{\p}}>>
\Omega (\tilde{F}^{\C}_n(U)/\T^r_{\C})
@>\Omega  v_n^{\C}>{\simeq}>
\Omega \QQ_n (\overline{U},\sigma \overline{U})
\end{CD}
\end{equation}
If we identify $\Q^{D+\infty,\Sigma}_n(\C)$ with the colimit
$\dis \lim_{t\to\infty}\QQ_{D+t\e,n}(\C)$, by replacing
 $D$ by $D+t\textit{\textbf{e}}$
$(t\in \N)$
and letting $t\to\infty$, 
we obtain the following homotopy commutative diagram:
\begin{equation}\label{CD: main diagram}
\begin{CD}
\Q^{D+\infty,\Sigma}_n (\C)
@>\widehat{j_{D+\infty,n,\C}}>> 
\Omega\tilde{F}^{\C}_n(U) 
@>\Omega ev_{\C}>\simeq>
\Omega \mathcal{Z}_{\KS}(D^{2n},S^{2n-1})
\\
 \Vert @.
@V{\Omega p_{\C}}V{}V
@.
\\
\Q^{D+\infty,\Sigma}_n (\C)
@>\widehat{j_{D+\infty,n,\C}^{\p}}>{}>
\Omega (\tilde{F}^{\C}_n(U)/\T^r_{\C})
@>{\Omega v_n^{\C}}>\simeq> 
\Omega \QQ_n (\overline{U},\sigma \overline{U})
\end{CD}
\end{equation}
where we set
$\dis \widehat{j_{D+\infty,n,\C}}=\lim_{t\to\infty}\widehat{j_{D+t\e,n,\C}}$
and
$\dis \widehat{j_{D+\infty,n,\C}^{\p}}=
\lim_{t\to\infty}\widehat{j_{D+t\e,n,\C}^{\p}}.$
\newline
Since
$(\Omega ev_{\C})\circ \widehat{j_{D+t\textit{\textbf{e}},n,\C}}
=j_{D+t\textit{\textbf{e}},n,\C}$ and
$(\Omega v_n^{\C})\circ \widehat{j_{D+t\textit{\textbf{e}},n,\C}^{\p}}
=sc_{D+t\textit{\textbf{e}}}$
(by identifying 
$\Q^{D+t\e,\Sigma}_n(\C)$ with the space
$E^{\Sigma}_{D+t\e,n}(\C)$),
we also obtain the following two equalities:
\begin{equation}\label{eq: j=S}
j_{D+\infty,n,\C}=(\Omega ev_{\C})\circ \widehat{j_{D+\infty,n,\C}},
\quad 
S^H=(\Omega v_n^{\C})\circ \widehat{j_{D+\infty,n,\C}^{\p}}. 
\end{equation}
Since the map $ev_{\C}$ is a homotopy equivalence, 
it suffices to prove that the map
\begin{equation*}\label{eq: dagger}
\widehat{j_{D+\infty,n,\C}}:
\Q^{D+\infty,\Sigma}_n(\C)
\stackrel{}{\longrightarrow}
\Omega\tilde{F}^{\C}_n(U) 
\leqno{(\dagger\dagger)_{\C}}
\end{equation*}
induces an isomorphism on the homotopy group
$\pi_k(\ )$ for any $k\geq 2$.
\par
Since
$S^H=(\Omega v_n^{\C})\circ \widehat{j_{D+\infty,n,\C}^{\p}}$ 
and $\Omega v_n^{\C}$ are
homotopy equivalences
(by Theorem \ref{thm: scanning map} and  Corollary \ref{crl: the map vn}),
the map
$\widehat{j_{D+\infty,n,\C}^{\p}}$ is  a
homotopy equivalence.
Since $p_{\C}$ is a fibration with fiber $\T^r_{\C}$, 
the map $\Omega p_{\C}$ 
induces an isomorphism on the homotopy group $\pi_k(\ )$ for any $k\geq 2$.
Hence, by using the equality
$(\Omega p_{\C})\circ \widehat{j_{D+\infty,n,\C}}=\widehat{j_{D+\infty,n,\C}^{\p}}$
(up to homotopy equivalence), we see that
the map
$\widehat{j_{D+\infty,n,\C}}$ is 
induces an isomorphism on the homotopy group $\pi_k(\ )$ for any $k\geq 2$.
This completes the proof for the case $\K=\C$.
\par\vspace{2mm}\par
Next, consider the case $\K=\R$.
Note that the proof is almost identical to the case $\K=\C$.
However, since $\Omega p_{\R}$ is a homotopy equivalence, the proof is easier than that of the case $\K=\C$.
\par
Define the scanning map
$sca: \tilde{F}^{\R}_n(\C)\to \Map (\R,\tilde{F}^{\R}_n(U))$
by
\begin{equation} 
sca (f_1(z),\cdots ,f_r(z))(w)=(f_1(z+w),\cdots ,f_r(z+w))
\end{equation}
for $(f_1(z),\cdots ,f_r(z)),w)\in \tilde{F}^{\R}_n(\C)\times \R$.
Now
consider the diagram
$$
\begin{CD}
\tilde{F}^{\R}_n(U) @>ev_{\R}>\simeq>\mathcal{Z}_{\KS}(D^{n},S^{n-1})
\\
@V{p_{\R}}VV @. 
\\
\tilde{F}^{\R}_n(U)/\T^r_{\R} 
@>v_n^{\R}>\simeq> 
\QQ_n (\overline{U},\sigma \overline{U})^{\Z_2}
\end{CD}
$$
This induces the commutative diagram below
$$
\begin{CD}
\tilde{F}^{\R}_n(\C) @>sca>> \Map (\R, \tilde{F}^{\R}_n(U)) 
@>(ev_{\R})_{\#}>\simeq> 
\Map (\R,\mathcal{Z}_{\KS}(D^{n},S^{n-1}))
\\
@V{p_{\R}}VV @V{(p_{\R})_{\#}}VV @.
\\
\tilde{F}^{\R}_n(\C)/\T^r_{\R} @>sca>>
\Map (\R, \tilde{F}^{\R}_n(U)/\T^r_{\R}) 
@>(v_n^{\R})_{\#}>{\simeq}>
\Map (\R, \QQ_n (\overline{U},\sigma \overline{U})^{\Z_2})
\end{CD}
$$
Observe that $\Map (\R,\cdot )$ can be replaced by $\Map^* (S^1,\cdot)$
by extending  from $\R$ to $S^1=\R\cup \infty$
(as base-point preserving maps).
Thus 
by setting
$$
\begin{cases}
\widehat{j_{D,n,\R}}:\Q^{D,\Sigma}_n(\R)
 \stackrel{\subset}{\longrightarrow} 
 \tilde{F}^{\R}_n(\C) 
\stackrel{sca}{\longrightarrow}
\Omega \tilde{F}^{\R}_n(U)
\\
\widehat{j_{D,n,\R}^{\p}}:
\QQ_{D,n}(\C)^{\Z_2}
\stackrel{\subset}{\longrightarrow}
\tilde{F}^{\R}_n(\C) \stackrel{sca}{\longrightarrow}
\Omega (\tilde{F}^{\R}_n(U)/\T^r_{\R})
\end{cases}
$$
we obtain the following commutative diagram
\begin{equation}\label{CD: main2}
\begin{CD}
\Q^{D,\Sigma}_n(\R)
@>\widehat{j_{D,n,\R}}>> \Omega \tilde{F}^{\R}_n(U) 
@>\Omega ev_{\R}>\simeq>
\Omega \mathcal{Z}_{\KS}(D^{n},S^{n-1})
\\
@V{\cong}VV @V{\Omega p_{\R}}V{\simeq}V @. 
\\
\QQ_{D,n}(\C)^{\Z_2}
@>\widehat{j_{D,n,\R}^{\p}}>>
\Omega (\tilde{F}^{\R}_n(U)/\T^r_{\R})
@>\Omega  v_n^{\R}>{\simeq}>
\Omega \QQ_n (\overline{U},\sigma \overline{U})^{\Z_2}
\end{CD}
\end{equation}
If we identify $\Q^{D+\infty,\Sigma}_n(\R)$ with the colimit
$\dis \lim_{t\to\infty}\QQ_{D+t\e,n}(\C)^{\Z_2}$, by replacing
 $D$ by $D+t\textit{\textbf{e}}$
$(t\in \N)$
and letting $t\to\infty$, 
we obtain the following homotopy commutative diagram:
\begin{equation}\label{CD: main diagram K=R}
\begin{CD}
\Q^{D+\infty,\Sigma}_n (\R)
@>\widehat{j_{D+\infty,n,\R}}>> 
\Omega\tilde{F}^{\R}_n(U) 
@>\Omega ev_{\R}>\simeq>
\Omega \mathcal{Z}_{\KS}(D^{n},S^{n-1})
\\
 \Vert @.
@V{\Omega p_{\R}}V{\simeq}V
@.
\\
\Q^{D+\infty,\Sigma}_n (\R)
@>\widehat{j_{D+\infty,n,\R}^{\p}}>{}>
\Omega (\tilde{F}^{\R}_n(U)/\T^r_{\R})
@>{\Omega v_n^{\R}}>\simeq> 
\Omega \QQ_n (\overline{U},\sigma \overline{U})^{\Z_2}
\end{CD}
\end{equation}
where we set
$\dis \widehat{j_{D+\infty,n,\R}}=\lim_{t\to\infty}\widehat{j_{D+t\e,n,\R}}$
and
$\dis \widehat{j_{D+\infty,n,\R}^{\p}}=
\lim_{t\to\infty}\widehat{j_{D+t\e,n,\R}^{\p}}.$
\newline
Since
$(\Omega ev_{\R})\circ \widehat{j_{D+t\textit{\textbf{e}},n,\R}}
=j_{D+t\textit{\textbf{e}},n,\R}$ and
$(\Omega v_n^{\R})\circ \widehat{j_{D+t\textit{\textbf{e}},n,\R}^{\p}}
=(sc_{D+t\textit{\textbf{e}}})^{\Z_2}$,
we also obtain the following two equalities:
\begin{equation}\label{eq: j=S K=R}
j_{D+\infty,n,\R}=(\Omega ev_{\R})\circ \widehat{j_{D+\infty,n,\R}},
\quad 
(S^H)^{\Z_2}=(\Omega v_n^{\R})\circ \widehat{j_{D+\infty,n,\R}^{\p}}. 
\end{equation}
Since the map $ev_{\R}$ is a homotopy equivalence, 
it suffices to prove that the map
\begin{equation*}\label{eq: dagger}
\widehat{j_{D+\infty,n,\R}}:
\Q^{D+\infty,\Sigma}_n(\C)
\stackrel{}{\longrightarrow}
\Omega\tilde{F}^{\R}_n(U) 
\leqno{(\dagger\dagger)_{\R}}
\end{equation*}
is a homotopy equivalence.
\par
Since
$(S^H)^{\Z_2}=(\Omega v_n^{\R})\circ \widehat{j_{D+\infty,n,\R}^{\p}}$ 
and $\Omega v_n^{\R}$ are
homotopy equivalences
(by Theorem \ref{thm: scanning map K=R} and  Corollary \ref{crl: the map vn}),
the map
$\widehat{j_{D+\infty,n,\C}^{\p}}$ is  a
homotopy equivalence.
On the other hand,
since $p_{\R}$ is a covering projection with fiber $(\Z_2)^r$,
the map $\Omega p_{\R}$ is a homotopy equivalence.
Hence,
by using the diagram (\ref{CD: main diagram K=R}), we see that
the map $\widehat{j_{D+\infty,n,\R}}$ is a homotopy equivalence.
This completes the proof of Theorem \ref{thm: V}.
\end{proof}
\section{Proofs of the main results}\label{section: proofs of main results}
Now  we give the proofs of the main results
(Theorems \ref{thm: I}, \ref{thm: I R}, and Corollary \ref{crl: I}).
\begin{proof}[Proofs of Theorem \ref{thm: I}]
(i)
Suppose that $\sum_{k=1}^rd_k\textbf{\textit{n}}_k={\bf 0}_n$.
Then the assertion (i) easily follows from Corollary \ref{crl: III+1connected} and
Theorem \ref{thm: V}.
\par
(ii)
Next assume that
$\sum_{k=1}^rd_k\textbf{\textit{n}}_k\not= {\bf 0}_n$.
Recall from  $($\ref{equ: condition dk}$)^*$ that
there is an $r$-tuple
$D_*=(d_1^*,\cdots ,d_r^*)\in \N^r$ such that
$\sum_{k=1}^rd_k^*\textbf{\textit{n}}_k= {\bf 0}_n$.
If we choose a sufficiently large integer $m_0\in \N$, then 
the condition $d_k<m_0d_k^*$ holds for each $1\leq k\leq r$.
Then consider the map
$j_{D,n,\C}:\Q^{D,\Sigma}_n(\C)\to \Omega \mathcal{Z}_{\KS}(D^{2n},S^{2n-1})$
defined by 
\begin{equation}\label{eq: def of jDnC}
j_{D,n,\C}=j_{D_0,n,\C}\circ s_{D,D_0},
\end{equation}
where
$D_0=m_0D_*=(m_0d^*_1,m_0d^*_2,\cdots ,m_0d^*_r)$
and $j_{D,n,\C}$ is given by the composite of the following maps
\begin{equation}
j_{D,n,\C}:
\Q^{D,\Sigma}_n(\C)
\stackrel{s_{D,D_0}}{\longrightarrow}
\Q^{D_0,\Sigma}_n(\C)
\stackrel{j_{D_0,n,\C}}{\longrightarrow}
\Omega \mathcal{Z}_{\KS}(D^{2n},S^{2n-1}).
\end{equation}
Since  the maps $s_{D,D_0}$ and
$j_{D_0,\C}$ are  homotopy equivalences through dimensions
$d(D;\Sigma,n,\C)$ and
$d(D_0;\Sigma,n,\C)$, respectively
(by Corollary  \ref{crl: III+1connected} and Theorem \ref{thm: I}),
by using
$d(D;\Sigma,n,\C)\leq d(D_0;\Sigma,n,\C)$
the map $j_{D,n,\C}$
 is a homotopy equivalence through dimension
$d(D;\Sigma,n,\C)$.
\end{proof}
\begin{proof}[Proof of Theorem \ref{thm: I R}]
(i)
Suppose that $\sum_{k=1}^rd_k\textbf{\textit{n}}_k={\bf 0}_n$.
Then the assertion (i) easily follows from Corollary \ref{crl: III+1connected} and
Theorem \ref{thm: V}.
\par
(ii) This is proved completely analogous way as that of (ii) of Theorem \ref{thm: I}.
Indeed, under the same assumption as (ii) of the proof of Theorem \ref{thm: I}, 
we define the map
$j_{D,n,\R}:\Q^{D,\Sigma}_n(\R)\to \Omega \mathcal{Z}_{\KS}(D^n,S^{n-1})$ by
\begin{equation}\label{eq: def of jDnR}
j_{D,n,\R}=j_{D_0,n,\R}\circ s_{D,D_0}^{\R}.
\end{equation}
Since
$d(D;\Sigma,n,\R)\leq d(D_0;\Sigma,n,\R)$,
 it is easy to see that this map is a homotopy equivalence through dimension
$d(D;\Sigma,n,\R)$.
\end{proof}

\begin{proof}[Proof of Corollary \ref{crl: I}.]
Consider the map of composite
$$
\Omega \mathcal{Z}_{\KS}(D^{2n},S^{2n-1})
\stackrel{\simeq}{\longrightarrow}
\Omega \mathcal{Z}_{\KS}(\C^n,(\C^n)^*)
\stackrel{\Omega q_{n,\C}}{\longrightarrow}
 \Omega \XS (n).
 $$
 Since $\Omega q_{n,\C}$
is a universal covering
(by Corollary \ref{crl: universal covering}),
the assertions easily follow from  Theorem \ref{thm: I}.
\end{proof}

\par\vspace{1mm}\par
\noindent{\bf Acknowledgements. }
The second author was supported by 
JSPS KAKENHI Grant Number JP22K03283.
This work was also supported by the Research Institute of Mathematical
Sciences, a Joint Usage/Research Center located in Kyoto University.










\end{document}